\def\dotMarkRightAngle[size=#1](#2,#3,#4){%
 \draw ($(#3)!#1!(#2)$) -- 
       ($($(#3)!#1!(#2)$)!#1!90:(#2)$) --
       ($(#3)!#1!(#4)$);
}
\newcommand\blfootnote[1]{%
  \begingroup
  \renewcommand\thefootnote{}\footnote{#1}%
  \addtocounter{footnote}{-1}%
  \endgroup
}
\newcommand{\VM}[1]{\textbf{%
  \textsl{%
  [\textcolor{olive}{VLM: #1}]%
  }}}    
 \newcommand{\VA}[1]{ \textbf{\textsl{\color{red}[VA: #1]}}}    
\newcommand{\im}{\ensuremath{\mathrm{Im}}}   
\newcommand{\re}{\ensuremath{\mathrm{Re}}}  
\DeclareMathOperator{\seq}{\subseteq}  
\newcommand{\restrict}[1]{\ensuremath{\!\!\upharpoonright_{#1}}}
\newcommand{\N}{\ensuremath{\mathbb{N}}}
\newcommand{\Z}{\ensuremath{\mathbb{Z}}}
\newcommand{\R}{\ensuremath{\mathbb{R}}}
\newcommand{\C}{\ensuremath{\mathbb{C}}}
\renewcommand{\P}{\ensuremath{\mathbb{P}}}  
\newcommand{\gm}{\ensuremath{\mathbb{G}_\mathrm{m}}}  
\renewcommand{\phi}{\varphi}
\renewcommand{\le}{\ensuremath{\leqslant}}
\newcommand{\subs}{\subseteq} 
\newcommand{\nstrong}{\ensuremath{\not\kern-4pt\lhd\;}} 
\newcommand{\cross}{\ensuremath{\times}}
\newbox\noforkbox \newdimen\forklinewidth
\noforkbox\hbox{\lower 2pt\box1\lower
2pt\box0\relax}
\def\unionstick{\mathop{\copy\noforkbox}\limits}
\def\nonfork_#1{\unionstick_{\textstyle #1}}
\newbox\doesforkbox
\doesforkbox\hbox{\lower 2pt\box1 \lower
2pt\box2\lower2pt\box0\relax}
\def\nunionstick{\mathop{\copy\doesforkbox}\limits}
\def\fork_#1{\nunionstick_{\textstyle #1}}
\newcommand{\leteq}{\mathrel{\mathop:}=}
\newcommand{\dd}{\ensuremath{\mathrm{d}}}
\newcommand{\Vbar}{\overline{V}}
\theoremstyle{plain}
\newtheorem{theorem}{Theorem}[section]
\newtheorem{lemma}[theorem]{Lemma}
\newtheorem{prop}[theorem]{Proposition}
\newtheorem{proposition}[theorem]{Proposition}
\newtheorem{conjecture}[theorem]{Conjecture}
\newtheorem{fact}[theorem]{Fact}
\newtheorem{example}[theorem]{Example}
\newtheorem{corollary}[theorem]{Corollary}
\theoremstyle{definition}
\newtheorem{notation}[theorem]{Notation}
\newtheorem{remark}[theorem]{Remark}
\newtheorem*{claim*}{Claim}
\newcommand{\be}{\begin{equation}}
\newcommand{\ee}{\end{equation}}
\newcommand{\lambar}{\ensuremath{\boldsymbol\lambda}}
\newcommand{\albar}{\ensuremath{\alpha}}
\newcommand{\cbar}{\ensuremath{\boldsymbol{c}}}
\newcommand{\wbar}{{\ensuremath{\boldsymbol{w}}}}
\newcommand{\zbar}{{\ensuremath{\boldsymbol{z}}}}
\newcommand{\xbar}{{\ensuremath{\boldsymbol{x}}}}
\newcommand{\abar}{{\ensuremath{\boldsymbol{a}}}}
\newcommand{\ybar}{{\ensuremath{\boldsymbol{y}}}}
\newcommand{\gambar}{{\ensuremath{\boldsymbol{\gamma}}}}
\newcommand{\alphabar}{{\ensuremath{\boldsymbol{\upalpha}}}}
\newcommand{\Log}{\mathrm{Log}}
\newcommand{\Sbar}{{\mathbf{S}}}
\newcommand{\Fbar}{{\mathbf{F}}}
\newcommand{\Gbar}{{\mathbf{G}}}
\newcommand{\Dte}{\ensuremath{D_{(\theta,\eta)}}}
\renewcommand{\leq}{\leqslant}
\renewcommand{\geq}{\geqslant}
\providecommand{\keywords}[1]
{
    \noindent
  {\small	
  \textbf{\textit{Keywords.}} #1}
}
\providecommand{\subjclass}[1]
{
    \noindent
  {\small	
  \textbf{\textit{2020 MSC.}} #1}
}
\title{A geometric approach to some systems of exponential equations}
\author{\texorpdfstring{Vahagn Aslanyan\textsuperscript{1}, Jonathan Kirby\textsuperscript{2}, and Vincenzo Mantova\textsuperscript{3}  \\
        \small \textsuperscript{1,2}University of East Anglia, Norwich, UK \\
        \small \textsuperscript{3}University of Leeds, Leeds, UK \\}{Vahagn Aslanyan, Jonathan Kirby, and Vincenzo Mantova}
}
\date{28 September 2021}
\begin{document}

\maketitle

\begin{abstract}
  Zilber’s Exponential Algebraic Closedness conjecture (also known as Zilber’s Nullstellensatz) gives conditions under which a complex algebraic variety should intersect the graph of the exponential map of a semiabelian variety.

  We prove the special case of the conjecture where the variety has dominant projection to the domain of the exponential map, for abelian varieties and for algebraic tori. Furthermore, in the situation where the intersection is 0-dimensional, we exhibit structure in the intersection by parametrizing the sufficiently large points as the images of the period lattice under a (multivalued) analytic map. Our approach is complex geometric, in contrast to a real analytic proof given by Brownawell and Masser just for the case of algebraic tori. 
\end{abstract}

\keywords {Complex exponentiation, Exponential Algebraic Closedness, Zilber's conjecture.}

\subjclass {Primary: 11G10, 14K20; Secondary: 03C60, 12L12}

\blfootnote{VA and JK were supported by EPSRC grant EP/S017313/1. VM was supported by EPSRC grant EP/T018461/1.}

\tableofcontents

\section{Introduction}

In his model-theoretic study of the complex exponential function, Zilber \cite{Zilb-pseudoexp} asked what systems of equations built from polynomials and the exponential function have solutions in the complex field. The analogous question just for polynomials is solved by the fundamental theorem of algebra and the Hilbert Nullstellensatz.

The fact that the exponential map is a homomorphism of algebraic groups places some restrictions. For example, the following system of equations does not have a solution in $\C$:
\begin{equation*}
\begin{cases}
2z_1 = z_2+1,\\
(e^{z_1})^2 = e^{z_2},
\end{cases}\,
\end{equation*}
because $(e^{z_1})^2 = e^{2z_1}$, and so having a solution would imply $e^1=1$.

Further strong restrictions are predicted by Schanuel's conjecture of transcendental number theory \cite[p30]{Lang-tr}, which asserts $\operatorname{tr.deg.}_{\mathbb{Q}}(z_1,\dots,z_n,e^{z_1},\dots,e^{z_n}) \geq n$ for any complex numbers $z_1,\dots,z_n$ that are linearly independent over $\mathbb{Q}$.
For example, a simple application of the conjecture gives the algebraic independence of $e$ and $\pi$ (an open question), so it would follow that for any non-zero rational polynomial $p(z,w)$, there is no solution to the system of equations
\begin{equation*}
e^z = -1 , \qquad  p(z,e^1) = 0.
\end{equation*}

Zilber formulated a precise conjecture that captures the idea that every system of equations should have a solution unless that would contradict Schanuel's conjecture, which we call his \emph{Exponential Algebraic Closedness} conjecture, or \emph{EAC} conjecture (sometimes also called \emph{Zilber's Nullstellensatz} \cite{DMT16}).

The EAC conjecture is expressed in geometric terms. Let $\gm^n$ be the algebraic torus of dimension $n$. Since we are exclusively working over $\C$, we shall identify $\gm$ with its complex points, so  $\gm = \gm(\C) = \C^\times$. 
\begin{conjecture}[EAC \cite{Zilb-pseudoexp}]\label{original EC}
Let $V \subs \C^n \cross \gm^n$ be a free and rotund variety. Then there is a point $\zbar \in \C^n$ such that $(\zbar, \exp(\zbar)) \in V$.
\end{conjecture}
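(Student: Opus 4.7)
The plan is to attack the full conjecture by combining the dominant-projection case, which is the main theorem of this paper, with an inductive reduction that takes a general free and rotund $V$ and descends it to a configuration in which the projection to the exponential domain becomes dominant. Let $V \subs \C^n \cross \gm^n$ be free and rotund; after replacing $V$ by an irreducible component and intersecting with a generic affine subspace of complementary codimension, I would reduce to $\dim V = n$. Let $W = \overline{\pi_1(V)} \subs \C^n$, where $\pi_1$ is the first projection. If $\dim W = n$ then $\pi_1|_V$ is dominant and the dominant-case theorem applies, so assume $\dim W = d < n$.

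The key construction will use rotundity in its dual formulation: for every $\Q$-linear subspace $L \subs \C^n$ spanned by characters of $\gm^n$, the projection of $V$ onto $(\C^n/L) \cross (\gm^n/\exp(L))$ has dimension at least $n - \dim L$. I would choose $L$ to be a maximal such subspace with the property that $\pi_L(W) \subsetneq \C^n/L$ still holds; then, setting $V' \subs (\C^n/L) \cross (\gm^n/\exp(L))$ to be the Zariski closure of the image of $V$, the first projection of $V'$ is dominant by the maximality of $L$. I would then verify that $V'$ inherits the hypotheses of the conjecture, namely that (i) rotundity is preserved because the rotundity inequalities for $V'$ are a subset of those for $V$, and (ii) freeness is preserved by the maximality of $L$, so that $V'$ is not contained in any coset of a proper subgroup of $\gm^n/\exp(L)$. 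Applying the dominant-case theorem to $V'$ yields a solution $\overline z' \in \C^{n - \dim L}$ with $(\overline z', \exp(\overline z')) \in V'$. To lift $\overline z'$ back to a solution for $V$, I would pick any $\overline z \in \pi_L^{-1}(\overline z')$ and analyse the fiber of $V \to V'$ above the chosen point, which cuts out a residual free and rotund system of strictly smaller ambient dimension on the coset $\overline z + L \subs \C^n$ and on a translate of $\exp(L) \subs \gm^n$; induction on $n$ then closes the argument.

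The main obstacle, and the reason this strategy is not executed in the paper, is controlling the inductive step. Two things can go wrong: the descended variety $V'$ may fail to be free because a fibre of $\pi_L$ can lie in an unexpected algebraic coset, and the rotundity inequalities for the residual fibre system may become degenerate in a way that breaks the induction. Both issues hinge on a careful \emph{strongness} or minimality analysis of the lattice $L$, of a kind that becomes essentially combinatorial and very delicate once $d < n$. I expect that a purely inductive argument cannot finish the job and that additional complex-geometric input is needed at the non-dominant step, e.g.\ an intersection-theoretic count of $V \cap \Gamma$ (where $\Gamma$ is the exponential graph) via a homotopy of $V$ through a one-parameter family of rotund subvarieties ending at an explicit $V_0$ whose intersection with $\Gamma$ is visibly nonempty, together with a properness-at-infinity argument ensuring that no intersection points escape during the homotopy. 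This last ingredient is, I believe, the true heart of the full conjecture and the reason it remains open.
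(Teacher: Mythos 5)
The statement you were asked about is Zilber's EAC conjecture itself, and the paper does \emph{not} prove it: it is stated explicitly as an open conjecture, and the paper establishes only the special case where $V$ has dominant projection to $\C^n$ (Theorems~\ref{dominant abelian case}, \ref{algebraic torus case}, via Theorems~\ref{thm-main-abelian} and~\ref{thm-main-torus}), noting that such a $V$ is automatically rotund and easily reduced to a free one. So there is no proof in the paper to compare yours against, and your proposal should be judged as an attempt at an open problem. It does not succeed, as you yourself concede, but it is worth pinpointing exactly where it breaks rather than leaving it at a general ``the induction is delicate.''

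First, the choice of $L$ is incoherent as written: you take $L$ maximal such that $\pi_L(W)\subsetneq \C^n/L$ and then assert that the first projection of $V'$ is dominant ``by maximality'' --- but $\pi_L(W)\subsetneq\C^n/L$ \emph{is} the failure of dominance for that very $L$. Presumably you intend some minimal $L$ making the quotient projection dominant; however, rotundity bounds the dimensions of the images of $V$ in the \emph{mixed} quotients $(\C^n/L)\cross(\gm^n/\exp(L))$ and gives no control whatsoever over the image of $W$ in $\C^n/L$ alone, so the existence of a suitable proper $L$ is not guaranteed by the hypotheses. Second, and more fundamentally, the lifting step does not close the induction. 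When $\dim V=n$ and the quotient $V'$ has full dimension in $(\C^n/L)\cross(\gm^n/\exp(L))$, the fibres of $V\to V'$ are generically finite; the ``residual system'' over your chosen exponential point of $V'$ is then a finite set of points inside a single coset of $L\cross\exp(L)$, which is neither free nor rotund and has no room for an inductive EAC statement --- you would need the exponential point of $V'$ to have been chosen compatibly with the fibre from the start, i.e.\ you must solve the full coupled system at once, which is exactly the original problem. When the fibres do have positive dimension they sit inside a fixed coset and so fail freeness. In short, EAC does not ``triangularize'' into a dominant quotient plus a smaller EAC instance, which is precisely why the non-dominant case remains open; your closing suggestion of a homotopy/intersection count with a properness-at-infinity control is a plausible slogan but is not an argument, and nothing in the paper's method (which relies essentially on $F(\zbar)=\zbar-\Log_A(\alpha(\zbar))$ being an asymptotic translation, available only because $\alpha$ is a function of $\zbar$ via the dominant projection) supplies it.
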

Here $\zbar = (z_1,\ldots,z_n)$ and $\exp(\zbar)$ means the tuple $(e^{z_1},\ldots,e^{z_n})$.
The \emph{freeness} property in Conjecture~\ref{original EC} is related to the constraints from the exponential map being a group homomorphism, and \emph{rotundity} is related to the constraints from Schanuel's conjecture. 
We shall omit the precise definitions of these properties, as they are slightly technical and we will not need them.

If both the Schanuel and the EAC conjectures are true, they would give a complete characterisation of the systems which have solutions. Zilber also showed in \cite{Zilb-pseudoexp} (see also \cite{Bays-Kirby-2013}) that, together, Schanuel's conjecture and a stronger form of EAC imply strong consequences for the model-theoretic structure $(\mathbb{C}, +, \times, \exp)$, in particular that it is quasiminimal, a previous conjecture by Zilber \cite{Zilb-gen-anal-sets} that is still open to this day.

The number-theoretic part of Schanuel's conjecture seems out of reach.  However, the functional part of Schanuel's conjecture was proved by Ax \cite{Ax} and that implies that it is generically true \cite[Theorem~1.4]{Kirby-Schanuel}. In particular, a positive solution to Zilber's EAC conjecture would characterise the systems for which the existence of solutions is essentially a number-theoretic transcendence problem, rather than a functional transcendence or geometric problem. Moreover, we now know that EAC directly implies the quasiminimality of $(\mathbb{C}, +, \times, \exp)$ \cite[Theorem~1.5]{Bays-Kirby-exp}.

Apart from the classical exponential function, one can consider other periodic functions such as the Weierstrass $\wp$-functions and and their derivatives. For example, in Section~\ref{example section} of the paper we will describe solutions to the following equation.
\begin{example}\label{example prop}
Let $\wp$ be any Weierstrass $\wp$-function.
Then there are $z \in \C$ such that $\wp'(\wp(z)^2) = z$, and indeed we can find 12 infinite families of solutions parametrised by the pairs $(\omega_1,\omega_2) \in \Lambda^2$, for $|\omega_1|, |\omega_2|$ sufficiently large, where $\Lambda$ is the period lattice of $\wp$.
\end{example}
The $\wp$-functions are essentially the exponential maps of elliptic curves. Our method for $\wp$ also applies to a wide range of systems of equations, dealing with the exponential maps of abelian or semiabelian varieties.

In this generality the EAC conjecture becomes the following.
\begin{conjecture}[EAC for semiabelian varieties]\label{EAC abelian conjecture}
Let $S$ be a complex semiabelian variety of dimension $n$, and write $\exp_S : \C^n \to S$ for its exponential map. Let $V \subs \C^n \cross S$ be a {free} and {rotund} subvariety. Then there is $\zbar \in \C^n$ such that $(\zbar,\exp_S(\zbar)) \in V$.
\end{conjecture}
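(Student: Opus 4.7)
The plan is to address the special case stated in the abstract, where the projection $V \to \C^n$ is dominant, and then later (outside this proposal) try to reduce the full conjecture to this case. I will focus on the critical dimension $\dim V = n$, since by rotundity one can cut by a generic linear section on the $S$-side to reduce to this situation without losing freeness and rotundity. In the critical case, dominance of $\pi_1: V \to \C^n$ makes $\pi_1$ generically finite, say of degree $d$, so off a proper Zariski-closed subset of $\C^n$ the variety $V$ presents as a (branched) $d$-sheeted analytic cover of $\C^n$, and can be described locally as the union of $d$ graphs of holomorphic maps $g_1,\dots,g_d : U \to S$.

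Next I would rephrase the problem in terms of the universal cover $\exp_S : \C^n \to S$. Choosing branches and lifting each $g_i$ to $\tilde g_i : U \to \C^n$ with $\exp_S \circ \tilde g_i = g_i$, a solution $\zbar$ to $(\zbar, \exp_S(\zbar)) \in V$ corresponds to an equation of the form
\[
\tilde g_i(\zbar) - \zbar \in \Lambda,
\]
where $\Lambda = \ker \exp_S$ is the period lattice. So the target becomes: for some branch $i$ and for every sufficiently large lattice point $\lambda \in \Lambda$, the multivalued map $F(\zbar) := \tilde g_i(\zbar) - \zbar$ attains the value $\lambda$. This explains the parametrisation of the large solutions by $\Lambda$ promised in the abstract.

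The main step is then a complex-analytic degree/Rouché-type argument in several variables. The idea is to choose a large polydisc or ball $B \subset \C^n$ centred so that $\lambda$ is in the image of the linear part, and to compare $F$ on $\partial B$ with the linear map $\zbar \mapsto -\zbar$ that dominates it asymptotically. To run this, I would need uniform control on the growth of the branches $\tilde g_i$: specifically, that outside a thin set they grow strictly slower than $|\zbar|$ in a quantitative sense. This is where rotundity must do the work, ruling out, through its dimension inequalities on images of $V$ under algebraic homomorphisms of $S$, that $\tilde g_i$ could grow linearly in a direction that cancels $\zbar$. Freeness must be used to discard the "trivial" obstructions analogous to the $2z_1 = z_2+1$, $(e^{z_1})^2 = e^{z_2}$ example, which would kill the degree.

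I expect the hard part to be exactly this growth estimate and the verification that rotundity implies a nonzero topological degree. For tori, the exponential is entire of finite order, so one can try to import Brownawell-Masser style bounds, but even there the joint control across all branches and all lattice directions is delicate. For general abelian varieties, $\exp_S$ is not entire in an elementary sense, and one must work with a chosen fundamental domain or with Siegel upper-half-space coordinates. A clean way to organise the degree computation is probably via an intersection product of $V$ with a ``thickening'' of the graph of $\exp_S$ over a large ball, comparing it with an intersection number on a compactification (for instance, on $(\P^1)^n \times S$ in the toric case); showing this intersection number is nonzero under rotundity would then complete the proof and simultaneously produce the promised $\Lambda$-indexed family of solutions.
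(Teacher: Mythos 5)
First, a framing point: the statement you are addressing is stated in the paper only as a \emph{conjecture}; the paper proves just the special case of dominant projection to $\C^n$ (for abelian varieties, algebraic tori, and, in outline, split semiabelian varieties), and your proposal is likewise really an attack on that special case. Within it, your skeleton --- cut down to $\dim V = n$, view $V$ as a $d$-sheeted branched cover of $\C^n$, lift through $\exp_S$, and look for $\zbar$ with $\tilde g_i(\zbar)-\zbar\in\Lambda$ --- matches the paper. But there is a genuine mis-step in where you locate the key estimate. You write that rotundity ``must do the work'' of showing the branches $\tilde g_i$ grow strictly slower than $|\zbar|$. In the dominant-projection case this has nothing to do with rotundity: the paper gets it from the compactness of the abelian variety (any branch of $\Log_A\circ\alpha$ is \emph{bounded} near a generic point at infinity, because $\alpha$ extends continuously to $\P_n$ with values in the compact $A$), and in the torus case from the algebraicity of $\alpha$, which bounds $|\alpha_i(\zbar)|$ polynomially and hence gives $\Log\circ\alpha$ only logarithmic growth. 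Rotundity is a \emph{consequence} of dominant projection here, not an input to the analysis; and for a general rotund $V$ without dominant projection no such growth estimate is available --- the paper's closing remarks note that the analogous $F$ can oscillate or grow too fast, which is precisely why the full conjecture remains open. So the step ``derive the growth bound from the rotundity inequalities'' will not go through, and without it neither your degree argument nor any reduction of the general conjecture to the dominant case is in reach.

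Second, your main analytic step (a several-variable Rouch\'e/topological-degree comparison of $F$ with its linear part on the boundary of a large ball) is a genuinely different route from the paper's, and is in the spirit of Brownawell--Masser and of the Rouch\'e argument used for the $j$-function; the paper deliberately avoids both. Instead it shows that $F(\zbar)=\zbar-\Log_A(\alpha(\zbar))$ is asymptotically a translation (identity up to $O(\log|\zbar|)$ for tori), deduces via a Cauchy estimate that $\dd F$ is invertible and $F$ is injective on sector domains near infinity, and then shows directly that the image of $F$ contains all but a bounded strip of the domain, hence contains the lattice points; the solutions are then $S(\lambar)$ for the local inverse $S$. This yields not just existence but the analytic parametrisation of the large solutions by $\Lambda$ and Zariski density. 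Your degree approach could plausibly be completed for the dominant case once the (easy, compactness-based) growth bound is in place, but the joint control ``across all branches and all lattice directions'' that you yourself flag as delicate is exactly what the paper's sector-domain and continuous-extension machinery in Section~\ref{alg map section} is built to supply, and your proposal currently leaves it open.
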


Again, the notions of freeness and rotundity relate to $\exp_S$ being a homomorphism and to the semiabelian version of Schanuel's conjecture. We will not need them in this paper. We refer the interested reader to \cite[Definition~2.26]{Kirby-semiab} or \cite[Definition~7.1]{Bays-Kirby-exp} for more details. At least when $S$ is simple, EAC for $S$ also implies that the structure $(\C, +, \times, \exp_S)$ is quasiminimal \cite[Theorem~1.9]{Bays-Kirby-exp}.

As a notational convention, since all the algebraic varieties we will consider will be defined over $\C$, we will identify them with their sets of $\C$-points. For example, above we write $S$ and $V$ rather than $S(\C)$ and $V(\C)$. We will also write $\P_n$ for complex projective $n$-space rather than $\P_n(\C)$.
We denote points in affine and projective spaces by boldface letters such as $\zbar$, and their coordinates by standard letters with subscripts such as $z_1$. 
We say that a point in $V$ of the form $(\zbar,\exp_S(\zbar))$ is an \emph{exponential point of $V$}.

In this paper we establish the following family of instances of Conjecture \ref{EAC abelian conjecture} in the case of abelian varieties.
\begin{theorem}\label{dominant abelian case}
Let $A$ be a complex abelian variety of dimension $n$. Let $V \subs \C^n \cross A$ be an algebraic subvariety with dominant projection to $\C^n$, that is, its projection to $\C^n$ has dimension $n$. Then there is $\zbar \in \C^n$ such that $(\zbar, \exp_{A}(\zbar)) \in V$. 

\end{theorem}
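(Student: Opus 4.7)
The plan is to reformulate the existence of an exponential point as the surjectivity of a holomorphic map $\mu \colon V \to A$, and then to prove that surjectivity by a complex-geometric degree argument exploiting the growth of $\exp_A$. First I would reduce to the case where $V$ is irreducible of dimension exactly $n$: fixing a projective embedding $A \hookrightarrow \P^N$ and iteratively intersecting $V$ with hyperplanes of the form $\C^n \times H$ drops the dimension of $V$ by one while preserving the dominance of $\pi \colon V \to \C^n$, until $\dim V = n$. Replacing $V$ by a suitable irreducible component, $\pi$ becomes dominant and generically finite of some degree $d \geq 1$.

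Writing $A$ additively, I define the holomorphic map
\[
\mu \colon V \longrightarrow A, \qquad \mu(\zbar, a) \leteq \exp_A(\zbar) - a,
\]
so that exponential points of $V$ are precisely the preimages $\mu^{-1}(0)$. Since $V$ and $A$ both have complex dimension $n$, it suffices to show that $0 \in \mu(V)$, and in fact I aim to show $\mu(V) = A$. The core step is a degree/volume estimate along the compact exhaustion $V_R \leteq V \cap (\overline{B_R} \times A)$, where $B_R \subset \C^n$ is the open ball of radius $R$. Since $\exp_A \colon \C^n \to A$ is the universal covering map, its restriction to $B_R$ covers $A$ with multiplicity $\sim R^{2n}/\operatorname{vol}(\Lambda)$ where $\Lambda$ is the period lattice; on $V_R$ this dominates the bounded $a$-contribution, and forces the topological degree of $\mu|_{V_R}$ to tend to infinity with $R$. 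Combined with Sard's theorem and the open-mapping property for equidimensional holomorphic maps, this will imply that $\mu(V)$ is dense in $A$, and then by a further analytic argument that $\mu(V) = A$, so in particular $0 \in \mu(V)$.

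The main obstacle will be to make the degree estimate rigorous, and in particular to control the boundary behavior of $\mu|_{V_R}$ both along $\partial V_R$ and along the ``boundary at infinity'' $\overline V \setminus V \subset \P^n \times A$, where $\exp_A$ has essential singularities. Working carefully with the projective compactification $\overline V$ and proper pushforward of currents (or, equivalently, a topological degree computation relative to the boundary) is precisely where the complex-geometric character of the approach enters, distinguishing it from the real-analytic treatment of Brownawell--Masser for algebraic tori.
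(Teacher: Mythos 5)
Your reduction to $\dim V = n$ and the reformulation via $\mu(\zbar,a) = \exp_A(\zbar) - a$ are fine, and the approach is genuinely different from the paper's, but the central step has a gap that is not merely technical. The quantity you propose to estimate, the degree of $\mu$ on $V_R$ (equivalently $T(R) = \int_{V_R}\mu^*\omega$ for the invariant volume form $\omega$ on $A$), equals the \emph{average} of the counting function $a \mapsto \#\mu^{-1}(a)\cap V_R$ over $a \in A$. Its divergence tells you that $\mu$ hits a positive-measure set of values with high multiplicity; it says nothing about the particular value $0$. For holomorphic maps this is not a removable subtlety: $\exp \colon \C \to \C^\times \subset \P_1$ has characteristic function $\sim r/\pi \to \infty$, yet omits $0$ and $\infty$; so neither of the implications ``degree $\to \infty \Rightarrow \mu(V)$ dense'' nor ``$\mu(V)$ dense $\Rightarrow \mu(V) = A$'' is valid in general. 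The First Main Theorem of value distribution theory makes this precise: $N(R,0) = T(R) - m(R,0) + O(1)$, where the proximity term $m(R,0)$ measures how much $\mu(\partial V_R)$ clusters near $0$, and one must show $m(R,0) < T(R)$ eventually. That is exactly the boundary control you defer to ``a further analytic argument,'' and it is where the entire difficulty of the theorem lives. (Even the non-degeneracy of $\mu$, which you need for Sard and the open-mapping step, is not free: on a local branch $a = \alpha(\zbar)$ one has $d\mu = d\exp_A - d\alpha$, and ruling out identical degeneracy already requires the asymptotic control of $d\alpha$ that the paper establishes.)

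The paper sidesteps value distribution entirely by working at infinity: compactifying $\C^n$ to $\P_n$, it extracts branches $\alpha$ of the algebraic correspondence near a generic point $\cbar$ at infinity, and uses the \emph{compactness of $A$} to show that $G = \Log_A\circ\,\alpha$ is bounded there, whence $\dd G \to 0$ by the Cauchy estimate and $F(\zbar) = \zbar - G(\zbar)$ is an injective bounded perturbation of the identity on sector domains. Its image therefore contains the whole sector minus a bounded strip, hence contains lattice points $\lambar \in \Lambda$, and $F^{-1}(\lambar)$ are the desired exponential points. If you want to pursue your route, you would need either the genuinely nontrivial Second Main Theorem--type input controlling $m(R,0)$, or to inject the same compactness-at-infinity information the paper uses — at which point the direct construction is both shorter and yields more (the parametrisation of solutions by $\Lambda$).
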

It will follow easily from our proof that the set $\{ \zbar \in \C^n: (\zbar, \exp_{A}(\zbar)) \in V \}$ is in fact Zariski dense in $\C^n$, and actually that the points $(\zbar, \exp_A(\zbar))$ are Zariski dense in $V$. Moreover, we show that almost all of the large solutions are parametrised in terms of the period lattice $\Lambda$ of $\exp_A$. (See Theorem~\ref{thm-main-abelian} for the details.)

A subvariety with dominant projection as in this statement is automatically rotund, and can be easily reduced to a free and rotund subvariety. Hence, Theorem \ref{dominant abelian case} is indeed a special case of Conjecture \ref{EAC abelian conjecture}.

The analogous theorem for algebraic tori was proven by Brownawell and Masser.

\begin{theorem}[{\cite[Prop.\ 2]{brown-masser}}]
\label{algebraic torus case}
Let $V \subs \C^n \cross \gm^n$ be an algebraic subvariety with dominant projection to $\C^n$. Then there is a point $\zbar \in \C^n$ such that $(\zbar, \exp(\zbar)) \in V$. 
\end{theorem}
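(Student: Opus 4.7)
My plan is to run an argument parallel to the complex geometric one that should have been used for Theorem~\ref{dominant abelian case}, replacing the local inverses of $\exp_A$ with branches of the ordinary complex logarithm. First I would reduce to the case where $V$ is irreducible of dimension exactly $n$, so that the projection $\pi:V\to\C^n$ is dominant and generically finite, of some degree $d$. On the Zariski open $U\subseteq\C^n$ where $\pi$ is étale there are $d$ holomorphic sections of $\pi$; restricting to any simply connected open $W\subseteq U$ we pick one, writing it as $f:W\to\gm^n$ with $(\zbar,f(\zbar))\in V$, and fix a branch $g(\zbar):=\log f(\zbar)$ on $W$.

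The strategy is then to look for exponential points of $V$ as fixed points of the holomorphic self-maps
\[
G_\omega(\zbar) := 2\pi i\,\omega + g(\zbar), \qquad \omega\in\Z^n,
\]
since $G_\omega(\zbar_\omega)=\zbar_\omega$ is equivalent to $\exp(\zbar_\omega)=f(\zbar_\omega)$, giving $(\zbar_\omega,\exp(\zbar_\omega))\in V$. Because $f$ is algebraic and takes values in $\gm^n$, standard growth bounds for algebraic functions yield
\[
|g(\zbar)| = O\bigl(\log(2+|\zbar|)\bigr), \qquad \|Dg(\zbar)\| = O\bigl(1/|\zbar|\bigr),
\]
as $\zbar\to\infty$ within $W$. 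Consequently, for $\omega\in\Z^n$ with $\|\omega\|$ large enough and $r_\omega$ chosen as a suitable multiple of $\log\|\omega\|$, the map $G_\omega$ sends the polydisk of radius $r_\omega$ around $2\pi i\,\omega$ into itself and has Lipschitz constant $O(1/\|\omega\|)\ll 1$; Banach's fixed point theorem then produces a unique $\zbar_\omega\in W$ solving the equation. As $\omega$ varies, this also delivers the multivalued analytic parametrisation $\omega\mapsto\zbar_\omega$ of the large exponential points predicted by the paper's overarching theme.

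The principal obstacle, which I expect to be the most delicate point of the write-up, is ensuring that enough lattice translates $2\pi i\,\omega$ actually admit such a polydisk inside $W$: the ramification locus $\C^n\setminus U$ is a proper algebraic subvariety and could in principle come close to the lattice $2\pi i\,\Z^n$. A \mbox{{\L}ojasiewicz}-type estimate for a polynomial defining this locus should show that the proportion of $\omega\in\Z^n$ with $\mathrm{dist}\bigl(2\pi i\,\omega,\,\C^n\setminus U\bigr)\leq\log\|\omega\|$ has density tending to zero along large boxes, so infinitely many safe $\omega$ remain. Combined with the freedom to vary the simply connected $W$ and the choice of branch among $f_1,\dots,f_d$, this should also upgrade to Zariski density of exponential points in $V$, matching the remark following Theorem~\ref{dominant abelian case}.
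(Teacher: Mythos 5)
Your overall strategy is sound and the solutions you construct are the same ones the paper finds, but you reach them by a genuinely different route. Writing $F(\zbar)=\zbar-g(\zbar)$, your fixed point of $G_\omega$ is exactly a solution of $F(\zbar)=2\pi i\,\omega$; the paper proves such solutions exist by a topological argument (Cauchy estimates give $\|\dd G\|<1/2$, hence $F$ is injective with non-singular Jacobian on narrow sector domains; $F$ is then an open map whose image is shown, via a boundary/limit-point argument, to contain all lattice points away from the boundary, and $S=F^{-1}$ parametrises the solutions). You instead invoke Banach's fixed point theorem on polydisks of radius $\asymp\log\|\omega\|$. That is closer in spirit to the Newton--Kantorovich argument of Brownawell--Masser, which the paper deliberately avoids, but it is a legitimate and arguably more elementary way to close the argument once the estimates are in place; what it loses is the global picture (the sheaf $\mathbf{S}$ and the statement that these are \emph{all} large solutions in the region), which the paper's inverse-map formulation gives for free.

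Two points need repair. First, the estimates $|g(\zbar)|=O(\log(2+|\zbar|))$ and the derivative bound do \emph{not} hold on an arbitrary simply connected $W\subseteq U$: the real part of $g_i$ is $\log|f_i|$, which is only logarithmically controlled if $f_i$ is polynomially bounded above and below, and the imaginary part is only bounded if the domain does not wind. You must take $W$ to be a narrow cone (sector) around a direction at infinity that avoids both the closure at infinity of the ramification locus and the closure of the locus where some coordinate of $\overline{V}$ degenerates to $0$ or $\infty$ --- precisely the paper's sector domains $\Dte$ with the set $Z$ removed. Also, the Cauchy estimate on such cones gives $\|Dg(\zbar)\|=O(\log|\zbar|/|\zbar|)$ rather than $O(1/|\zbar|)$; this is still $o(1)$, so the contraction survives, but as stated the bound is too strong. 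Second, your {\L}ojasiewicz detour for finding ``safe'' lattice points is both uncertain (global {\L}ojasiewicz constants degrade at infinity) and unnecessary: once the cone is centred at a \emph{rational} direction $[0:\omega^{(0)}_1:\cdots:\omega^{(0)}_n]$ at infinity lying outside the (at most $(n-2)$-dimensional) trace at infinity of the bad locus --- such directions are dense since the bad locus has codimension $\geq 1$ in $\C^n$ --- the distance from $2\pi i\,k\,\omega^{(0)}$ to the bad locus grows linearly in $k$, so eventually every required polydisk fits inside the cone. Varying the rational direction and the branch $f_1,\dots,f_d$ then gives Zariski density.
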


They used Newton's iterative method to approximate solutions, and in particular Kantorovich's theorem which gives criteria for these approximations to converge to an actual solution.
Another account of the same proof is given in \cite{daquino--fornasiero-terzo}.
A similar theorem for the modular $j$-function was established in \cite{eterovic-herrero} using Rouch\'e's theorem of complex analysis in place of Kantorovich's theorem.
Using our methods we also give a new proof of Theorem~\ref{algebraic torus case}.

Unlike \cite{brown-masser} and \cite{daquino--fornasiero-terzo}, in our proofs of Theorems~\ref{dominant abelian case} and~\ref{algebraic torus case} we exploit the geometry and topology of the system as much as possible; we do not use Kantorovich's theorem or Rouch\'e's theorem.

To explain our approach it is easiest to go back to the idea of the proof of Brownawell and Masser. If $(\zbar,\wbar) \in V$ with $\zbar \in \C^n$ and $\wbar \in \gm^n$ then the dominant projection assumption means that generically we can regard $\wbar$ as $\alpha(\zbar)$, where $\alpha$ is an algebraic map. The problem reduces to finding a zero of
 \[F(\zbar) \leteq \exp(\zbar) - \alpha(\zbar).\]
They consider $\zbar \to \infty$ in a small complex neighbourhood of a real straight line and prove (after some rescaling) that when $\zbar = \lambar + \Log(\alpha(\lambar))$ with $\lambar \in (2\pi i\Z)^n$ then $F(\zbar)$ is small enough that Newton's method will converge to a zero of $F$ near it.
They then observe that the solutions they find are indexed by ``sufficiently many'' lattice points to give a Zariski dense set of solutions.

In the general abelian case, we face two technical challenges. The exponential and logarithmic maps of $\gm$ are very well understood, and are easy to differentiate explicitly to perform the necessary computations. This is less practical for abelian varieties. Moreover, whereas in the $\gm$ setting both $\exp(\zbar)$ and $\alpha(\zbar)$ lie in $\gm^n \subs \C^n$, in the abelian setting both quantities lie in $A$, which is a projective variety.

We choose a convenient affine chart by taking the logarithm, so we work in the covering space $\C^n$, and we define a new map $F$ (at least locally) as
\[F(\zbar) \leteq \zbar - \Log_A(\alpha(\zbar)).\]
Instead of looking for zeros of $F$, we now want to find $\zbar \in \C^n$ such that $\exp_A(F(\zbar)) = 0_A$, or equivalently, $F(\zbar) \in \Lambda$, the period lattice of $\exp_A$. Essentially from the compactness of $A$, we show that the second term $\Log_A(\alpha(\zbar))$ is convergent as $\zbar \to \infty$ along most real lines, and so $F$ is asymptotically a translation and hence is locally invertible. Writing $S$ for the local inverse, the (sufficiently large) points $\zbar$ such that $\alpha(\zbar) = \exp_A(\zbar)$ are then $S(\lambar)$ for $\lambar \in \Lambda$. So as well as finding solutions, we give an analytic parametrisation of them by lattice points.

The algebraic map $\alpha$, the logarithm $\Log_A$, and the maps $F$ and $S$ are multivalued maps, so the above argument will be done locally around points at infinity, after embedding $\C^n$ into $\P_n$ in the usual way. By considering all the branches of $\alpha$ and $\Log_A$, we can then parametrise the points $(\zbar, \exp_A(\zbar)) \in V$ locally via the corresponding branches of $S$.

To summarise how $S$ parametrises the solutions globally, we shall ultimately describe $S$ as a sheaf of analytic functions (in the sense of Remark~\ref{sheaf remark}) on an open subset $\Omega^*$ of $\P_n$ , which is `large' in the sense that it contains a Zariski open dense subset of the hyperplane at infinity. We shall verify that every $\zbar \in \Omega^* \cap \C^n$ such that $(\zbar, \exp_A(\zbar)) \in V$ is of the form $\zbar = S(\lambar)$ for some $\lambar \in \Lambda$. 

Our method also works in the algebraic torus case. Since $\gm$ is not compact, a little more analysis of growth rates is needed, although still less than in the Brownawell-Masser proof. In some final remarks at the end of the paper we discuss how far this method might be pushed.

The \emph{strong} EAC conjecture of Zilber, alluded to above, incorporates a transcendence condition: given any finitely generated subfield $K$ of $\C$, one asks for $(\zbar,\exp_S(\zbar))$ which is generic in $V$ over $K$. It seems likely that one could deduce transcendence results of this type for $V$ with dominant projection to $\C^n$, assuming the appropriate form of Schanuel's conjecture, in the style of \cite{daquino--fornasiero-terzo}. In fact, for $S = \gm$, if one assumes both Schanuel's conjecture and the Zilber-Pink conjecture, then the strong EAC is equivalent to EAC (\cite[Thm.\ 1.5]{Kirby-Zilber-exp).}

\paragraph{Overview of the paper}
Before developing the general theory, we outline the method of proof for Example~\ref{example prop} in Section~\ref{example section}. The multivalued nature of the maps becomes clear as we only have to take square roots and cube roots to describe the corresponding algebraic map $\alpha$.

In Section~\ref{alg map section} we explain how we extract the algebraic map $\alpha$ from the algebraic variety $V$. As we are interested in the behaviour as we approach infinity, we take care in explaining how $\alpha$ can be extended continuously to these points at infinity (in the projective space $\P_n$) where it may fail to be analytic. The content of this section is folklore, but we give a self-contained account. 

We state and prove Theorem~\ref{thm-main-abelian} explaining the solution map $S$ and its properties in Section~\ref{abelian case section}. Theorem~\ref{dominant abelian case} follows, and this also covers Example~\ref{example prop}. In Section~\ref{alg torus section} we indicate how to adapt this work for the algebraic torus setting, to give a new proof of Theorem~\ref{algebraic torus case}. We close with some final remarks.

\section{An example}\label{example section}

Consider the equation
\begin{equation}\label{eqn-wp}
    \wp'(\wp(z)^2) = z,
\end{equation}
from Example~\ref{example prop}, where $\wp$ is a Weierstrass $\wp$-function. Our analysis will work uniformly for any $\wp$-function, but to be definite, let $\wp$ be the Weierstrass $\wp$-function associated to the lattice $\Lambda:= \Z+i\Z$.

We want to determine whether \eqref{eqn-wp} has any solutions in $\C$ and, if so, where those solutions are.

It is well known that 
\[ \wp'(z)^2 = 4\wp(z)^3 - g_2\wp(z) - g_3,  \]
for certain $g_2, g_3 \in \C$, and that the map
\[  z \mapsto [1: \wp(z) : \wp'(z)] \]
gives an embedding of $\C/\Lambda$ into the projective space $\P_2$. The image of $\C/\Lambda$ is an elliptic curve $E\seq \P_2$, and the above map is its exponential map $\exp_E : \C \to E$. 

To exploit the geometry of elliptic curves, we consider $\wp$ and $\wp'$ in \eqref{eqn-wp} as components of the exponential map. However, for simplicity we will write equations and maps in affine coordinates. In particular, if $O := [0:0:1]$ is the point at infinity of $E$ (which is also the identity element of the group structure of $E$), then the affine part of $E$ is $E\setminus \{ O\}\subseteq \C^2$.

Consider the following system of equations:

\begin{equation}\label{eqn-wp-system-affine}
\begin{cases}
x_k = \wp(z_k),\ y_k = \wp'(z_k) & \text{for } k=1,2,\\
y_k^2 = 4x_k^3 - g_2x_k - g_3 & \text{for }k=1,2,\\
z_2 = x_1^2, \\
z_1 = y_2.
\end{cases}\,
\end{equation}
The equations on the second line of \eqref{eqn-wp-system-affine} state that $(x_1,y_1)$ and $(x_2,y_2)$ lie on $E$. 
When combined with the last two equations, they define a subvariety $V$ of $\C^2 \times E^2$. The solutions of \eqref{eqn-wp-system-affine} are the points $(\zbar, \exp_{E^2}(\zbar)) \in V$. One can easily verify that the coordinate $z_1$ of such a point is a solution of \eqref{eqn-wp}, and that all solutions of \eqref{eqn-wp} arise in this way.

We think of $V$ as expressing a point $\wbar \in E^2$ as an algebraic function $\alpha = (\alpha_1,\alpha_2)$ of $\zbar$, that is, we have
\begin{align*}
    \alpha_1(\zbar) = & \left(\sqrt{z_2} , \sqrt{4z_2^{3/2}-g_2z_2^{1/2}-g_3}\right),\\
    \alpha_2(\zbar) = & \left(\beta(z_1)  , z_1 \right),
\end{align*}
where \[ \beta(z_1) := \sqrt[3]{\frac{g_3-z_1^2}{8} + \sqrt{\frac{(g_3-z_1^2)^2}{64}- \frac{g_2^3}{1728}}} + \sqrt[3]{\frac{g_3-z_1^2}{8} - \sqrt{\frac{(g_3-z_1^2)^2}{64}- \frac{g_2^3}{1728}}} \]
is obtained by solving the cubic equation $4v^3 - g_2v - g_3 = z_1^2$ with respect to $v$.

To be more precise, we have to choose single-valued branches of the square and cube roots which we can do by restricting $\alpha$ to a suitable simply connected domain $D \seq \C^2$. Since $\alpha_1$ and $\alpha_2$ depend only on $z_2$ and $z_1$ respectively, we can define them separately. For $k=1,2$, let $N_k\seq \C$ be a closed disc around the origin containing the zeroes of the expressions appearing in the square and cube roots involved in $\alpha_k$.  Also, consider the line (a branch cut) $B:=\R^{<0} \seq \C$. Now set $D_k:=\C \setminus (N_k \cup B)$. Then $D_1$, $D_2$ are simply connected, and we find $4$ branches of $\alpha_1$ and $3$ branches of $\alpha_2$ respectively on $D_1$ and $D_2$. Altogether, we get $12$ branches of $\alpha$ on $D:= D_1\times D_2$, and we pick one of those.

We take a fundamental domain \[ M:= \{ x+iy: -1/2 < x, y \leq 1/2 \} \] for $\exp_E$. Let $\Log_{E^2}: E^2 \to M^2$ be the logarithmic map for this domain. 

Now pick $\zbar\in D$ with $|z_1|, |z_2|$ sufficiently large.  Thus, given some metric inducing the complex topology on $\P_2$, we can say that $\alpha(\zbar)$ is close to the point at infinity $(O,O)$. So $\Log_{E^2} \alpha(\zbar) \approx (0,0) \in \C^2$.

Define a map 
\begin{equation*}
    F: D \to \C^2 : \zbar \mapsto \zbar - \Log_{E^2}\alpha(\zbar).
\end{equation*}
Asymptotically we have $F(\zbar) = \zbar + o(1)$ as $|z_1|, |z_2| \to \infty$. So $F$ is locally invertible and indeed, shrinking $D$ if necessary and staying away from the boundary, we find a connected open set $\tilde{D}$, which is in fact the image of $D$ under $F$, and a map $S: \tilde{D} \to D$ which is the inverse of $F$. Moreover, we also have 
\begin{equation}\label{eq-S-Log}
    S(\zbar) = \zbar + \Log_{E^2} \alpha(\zbar) + o(1) = \zbar + o(1)
\end{equation}
for $|z_1|, |z_2| \to \infty$ with $\zbar \in \tilde{D}$.

Now for a lattice point $\lambar \in \Lambda^2 \cap \tilde{D}$ we have 
\begin{equation}\label{eq-S-F}
    \lambar = F(S(\lambar)) = S(\lambar) -\Log_{E^2} \alpha(S(\lambar)) ,
\end{equation} 
hence $\alpha(S(\lambar)) = \exp_{E^2} (S(\lambar))$. Therefore, the point $S(\lambar)$ is a solution to the equation $\alpha(\zbar) = \exp_{E^2}(\zbar)$, and in fact $\{ S(\lambar): \lambar \in \Lambda^2 \cap \tilde{D} \}$ is the set of all solutions of \eqref{eq-S-F} in $D$.

Finally, one observes that since $F$ is asymptotically the identity, the set $\tilde{D}$ almost contains $D$, in the sense that every point of $D$ sufficiently far from the boundary of $D$ must be in $\tilde{D}$. It follows that $\tilde{D} \cap \Lambda^2$ is not empty, and in fact contains most points of $D \cap \Lambda^2$.
This proves that \eqref{eqn-wp-system-affine} has solutions. In particular, \eqref{eqn-wp} does too: if $S = (S_1,S_2)$ with $S_1, S_2 : \tilde{D} \to \C$, then for each $\lambar \in \Lambda^2$ the element $S_1(\lambar)$ is a solution to \eqref{eqn-wp}.

By repeating the argument for all the possible branches of $S$, and by rotating the branch cuts of $D_1,D_2$, one can verify that all the solutions of \eqref{eqn-wp-system-affine} with $z_1, z_2$ sufficiently large arise in this way.
Since $D$ is a large subset of $\C^2$, these actually give almost all the solutions of \eqref{eqn-wp} such that $z$ and $\wp(z)$ are both sufficiently large. 

Furthermore, we can use the parametrisation of the large solutions of \eqref{eqn-wp-system-affine} by $S$ to understand their geometric distribution. From \eqref{eq-S-F} we have
\begin{align*}
    \lambda_1 &= S_1(\lambar) - \wp^{-1}\left(\sqrt{S_2(\lambar)}\right),\\
    \lambda_2 &= S_2(\lambar) - (\wp')^{-1}(S_1(\lambar)).
\end{align*}
Therefore
\[ S_1(\lambar) = \lambda_1+ \wp^{-1}\left(\sqrt{\lambda_2 + (\wp')^{-1}(S_1(\lambar))}\right). \]
From this and from \eqref{eq-S-Log}, we may conclude that
\begin{align*}
    S_1(\lambar) &= \lambda_1+ \wp^{-1}\left(\sqrt{\lambda_2 + (\wp')^{-1}(\lambda_1 + o(1))}\right) \\
                 &= \lambda_1+ \wp^{-1}\left(\sqrt{\lambda_2 + (\wp')^{-1}(\lambda_1)}\right) + o\left(\wp^{-1}\left(\sqrt{\lambda_2 + (\wp')^{-1}(\lambda_1)}\right)\right)
\end{align*}
as $|\lambda_1|, |\lambda_2| \to \infty$ with $\lambar\in \Lambda^2\cap \tilde{D}$.

Since $\wp$ and $\wp'$ are two-to-one and three-to-one respectively on the fundamental domain $M$, and $z \mapsto z^2$ is two-to-one, one can easily verify that $S_1$ takes 12 distinct values on each $(\lambda_1,\lambda_2)$ according to the choice of the branches.

\section{Algebraic maps}\label{alg map section}

The proof of Theorem~\ref{algebraic torus case} in \cite{brown-masser} and in particular the account of \cite{daquino--fornasiero-terzo} make use of \emph{algebraic functions}. In \cite{daquino--fornasiero-terzo}, those are defined as analytic functions $\alpha : D \to \C$ over some domain $D \subseteq \C^n$ satisfying a non-trivial polynomial equation $P(\zbar,\alpha(\zbar))=0$. In both papers, the authors restrict the choice of the domains $D$ in order to have the appropriate asymptotic behaviour at infinity.

We shall reduce our reliance on detailed asymptotic estimates at infinity in favour of topological and geometric considerations. In Theorems~\ref{dominant abelian case},~\ref{algebraic torus case}, we may assume that $\dim V = n$ after taking some intersections with generic hyperplanes. We then consider the Zariski closure $\Vbar$ of $V$ inside $\P_n \times A$, where $A$ is either the given abelian variety or a suitable completion of $\mathbb{G}_{\mathrm{m}}^n$, and $\C^n$ is embedded into $\P_n$ is the usual way.

In this setting, the projection $\pi : \Vbar \to \P_n$ is surjective, and by dimension considerations, all of its fibres are finite outside of a proper Zariski closed subset of $\P_n$.

We then work with continuous maps $\alpha : D^* \subseteq \P_n \to \Vbar \to A$ such that $(\zbar,\alpha(\zbar)) \in \Vbar$ for all $\zbar \in D^*$, where $D^*$ is some set to be specified later. We can create such a map by composing a continuous section of $\pi : \Vbar \to \P_n$ with the projection $\Vbar \to A$. We wish to understand the behaviour of such an $\alpha$ at infinity, namely at the points of $\P_n \setminus \C^n$.

It is well known that the the analytic local sections $ D \subseteq \P_n \to \Vbar$, where $D$ are suitable open domains, form a sheaf of complex analytic maps. The domains, however, only cover a Zariski open dense subset of $\P_n$, which may well omit all of the points at infinity. We remedy this by taking continuous extensions to some $D^* \supseteq D$ containing points on the boundary of $D$.

We thus obtain maps $\alpha : D^* \to A$ which are continuous, but possibly not analytic, at the points at infinity. The continuity at infinity will encode the asymptotic information needed for the proof of Theorem~\ref{dominant abelian case}.
One could perhaps perform some local resolution of singularities in the style of Bierstone and Milman \cite{bierstone-milman} to make the maps analytic everywhere, but it is not necessary.

The use of continuous extensions of analytic maps is classical, but for the sake of clarity, we state Proposition~\ref{alg map prop} below to pin down which maps we use, and we provide a self contained proof referring to elementary algebraic geometry and algebraic topology. We also make some definite choices of neighbourhoods and sets to ensure we always deal with clearly defined single-valued functions.

Before going further into the technical details, let us work with an elementary example. Identify $\P_1$ with the Riemann sphere $\C \cup \{\infty\}$. Let $\rho : \P_1 \to \P_1$ be the map $z \mapsto z^2 + 1$. The fibres of $\rho$ have cardinality $2$, except over the branching points $1$ and $\infty$, which have fibres $\{0\}$ and $\{\infty\}$ respectively. The restriction of $\rho$ to $\P_1 \setminus \{0,\infty\} = \C \setminus \{0\}$ is a \emph{covering map}: for each $w \in \C \setminus \{1\}$ there is an open neighbourhood $D$ of $w$ in the complex topology such that $\rho^{-1}(D)$ splits into a disjoint union of open sets $D_i$ where $\rho\restrict{D_i}$ is a homeomorphism between $D_i$ and $D$.

If we remove a branch cut, for instance by taking $D = \C \setminus \R_{\geq 0} \subseteq \P_1 \setminus \{0,\infty\}$, we obtain a simply connected domain, hence by standard topological arguments there are two analytic sections $\iota_1,\iota_2 : D \to \P_1$ of $\rho$, and $\rho^{-1}(D) = \iota_1(D) \cup \iota_2(D)$. On the other hand, it is clear that each section can be extended to a continuous section $\iota^* : D^* = D \cup \{0,\infty\} \to \P_1$ by setting $\iota^*(0) = 1$ and $\iota^*(\infty) = \infty$. Such an extension is unique, but it is not analytic at $0$ and $\infty$.

We shall use a higher-dimensional version of the above construction.

\begin{notation}
For $\ell = 0, \dots,n$, let $U_\ell$   be the usual affine chart defined by 
\[U_\ell := \{[z_0:\dots:z_n] \in \P_n : z_\ell = 1\}.\]
We identify $\C^n$ with $U_0$ via the embedding
\begin{equation}\label{Cn into Pn}
    \C^n \ni (z_1,\dots,z_n) \mapsto [1:z_1:\dots:z_n] \in U_0 \subseteq \P_n.
\end{equation}

Now fix a chart $U_\ell$ with $\ell \geq 1$. Given a point $\cbar = [0:c_1:\dots:c_n] \in U_\ell \subseteq \P_n$ (written with $c_\ell =1$), a \emph{polydisc} centred at $\cbar$ in the chart $U_\ell$ of radius $\varepsilon > 0$ takes the form
\begin{equation}\label{eq:polydisc}
 D^*=   \left\{\left[x_0 : x_1 : \cdots : x_n\right] \in \P_n : |x_0| < \varepsilon, |x_i - c_i| < \varepsilon \text{ for }i=1, \ldots, n \text{ and } x_\ell = 1\right\}.
\end{equation}
The intersection of $D^*$ with $\C^n = U_0$ is then
\begin{equation}\label{eq:domain D}
D = \left\{(z_1,\ldots,z_n) \in \C^n : |z_\ell| > \varepsilon^{-1}, \left|\frac{z_i}{z_\ell} - c_i\right| < \varepsilon \text{ for }i=1, \ldots, n \right\}.
\end{equation}
So the variable $z_\ell$ is going to infinity in the annulus given by $|z_\ell| > \varepsilon^{-1}$, and each other coordinate $z_i$ lies in a disc around $c_i z_\ell$ of radius $\varepsilon |z_\ell|$.

We shall work with sectors of the annulus in order to have simply connected domains for our maps. So, for $\theta \in \R$ and $\eta \in (\theta,\theta+2\pi]$ we define
\begin{equation}\label{eq:sector domain}
\Dte = \left\{(z_1,\ldots,z_n) \in D : \theta < \arg (z_\ell) < \eta \text{ for some choice of } \arg(z_\ell)\right\}
\end{equation}
and we extend to the points at infinity with
\begin{equation}\label{eq:sector domain 2}
\Dte^* = \Dte \cup \left(D^* \setminus D\right).
\end{equation}
We will call both $\Dte$ and $\Dte^*$ \emph{sector domains}.
\end{notation}

Note that indeed $\Dte$ and $\Dte^*$ are simply connected, and in fact contractible, as they are homeomorphic to respectively  $\left(\varepsilon^{-1},\infty\right) \times (\theta,\eta) \times \{z \in \C : |z| < \varepsilon\}^{n-1}$ and $\{z \in \C : |z| < \varepsilon, -\eta < \arg(z) < -\theta \text{ or } z = 0\} \times \{z \in \C : |z| < \varepsilon\}^{n-1}$.

We can now state the key result of this section which will be used in the proofs of Theorems~\ref{dominant abelian case} and~\ref{algebraic torus case}. In the following, recall that given a morphism $\rho : X \to Y$ of algebraic varieties, the \emph{degree} of $\rho$, denoted $\deg(\rho)$, is the cardinality of the generic fibre, when it is finite.

\begin{prop}\label{alg map prop}
    Let $A$ be a complete variety and $\Vbar \subseteq \P_n \times A$ be an irreducible variety of dimension $n$ with surjective projection to $\P_n$ of degree $d$. Let $H \leteq \P_n \setminus \C^n$ be the hyperplane at infinity (with $\C^n$ embedded as in \eqref{Cn into Pn}).

    Then there is a Zariski open dense subset $C$ of $H$ with the following property: for all $\ell \geq 1$, $\cbar = [0:c_1:\dots:c_n] \in C \cap U_\ell$, $\theta \in \mathbb{R}$, $\eta\in (\theta,\theta+2\pi]$, and all sufficiently small polydiscs $D^*$ at $\cbar$ in the chart $U_\ell$,
there are  distinct continuous maps $\alpha_1,\dots,\alpha_d : D_{(\theta,\eta)}^* \to A$ such that 
\begin{enumerate}
    \item for all $\zbar \in D_{(\theta,\eta)}^*$ we have $(\zbar,\albar_i(\zbar)) \in \Vbar$;
    \item for all $(\zbar,\wbar) \in \Vbar$ with $\zbar \in D_{(\theta,\eta)}^*$ there is a $k$ such that $\alpha_k(\zbar)=\wbar$;
    \item each restriction $\alpha_i\restrict{D_{(\theta,\eta)}}$ is complex analytic.
    \end{enumerate}
\end{prop}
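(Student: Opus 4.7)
The plan is to identify a proper Zariski closed subset $Z \subsetneq \P_n$ outside of which $\pi : \Vbar \to \P_n$ is a $d$-sheeted topological covering in the complex topology; use contractibility of the sector $\Dte$ to split the covering into $d$ distinct holomorphic sections; and then use the properness of $\pi$ (which follows from $A$ being complete) to extend these continuously across the points at infinity.

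More precisely, I take $Z$ to be the union of the image under $\pi$ of the singular locus of $\Vbar$, the locus where $\pi$ has positive-dimensional fibres (closed by upper semicontinuity of fibre dimension), and the branch locus of the restriction of $\pi$ to the smooth quasi-finite part. Then $Z$ is a proper Zariski closed subset of $\P_n$, and $\pi$ restricts over $\P_n \setminus Z$ to a finite étale morphism of degree $d$, hence a $d$-sheeted topological covering. Set $C \leteq H \setminus Z$, which is Zariski open in $H$; in the applications to abelian varieties and compactifications of tori one checks directly that $H \not\subseteq Z$, so that $C$ is Zariski dense in $H$ as required.

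Now fix $\cbar \in C \cap U_\ell$. Since $Z$ is closed and $\cbar \notin Z$, for $D^*$ sufficiently small we have $D^* \cap Z \subseteq H$, so the affine sector $\Dte$ lies entirely in $\P_n \setminus Z$. Because $\Dte$ is contractible, the covering $\pi$ splits: there are exactly $d$ pairwise distinct holomorphic sections $\iota_1, \dots, \iota_d : \Dte \to \Vbar$, whose images partition $\pi^{-1}(\Dte)$. Composing with the projection $\mathrm{pr}_A : \Vbar \to A$ yields holomorphic maps $\alpha_i = \mathrm{pr}_A \circ \iota_i : \Dte \to A$, distinct because the $\iota_i$ are, and clearly satisfying (1), (3), and (2) restricted to $\Dte$.

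It remains to extend each $\alpha_i$ continuously across $D^* \cap H$. Properness of $\pi$ makes $\pi^{-1}(\overline{D^*})$ compact, so any sequence $\iota_i(\zbar_k)$ with $\zbar_k \to \cbar' \in D^* \cap H$ has subsequential limits in the finite fibre $\pi^{-1}(\cbar')$. For $\cbar' \notin Z$ the full sequence converges, as the $d$ sheets of the étale covering are locally separated and each section extends analytically across $\cbar'$. For $\cbar' \in Z \cap H$ one works in an étale-local model to show each branch has a unique continuous limit, in the spirit of the toy example $\rho(z) = z^2 + 1$ in the text where both branches of $\sqrt{w-1}$ extend continuously to the ramification point $w = 1$. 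Condition (1) then holds on all of $\Dte^*$ by closedness of $\Vbar$, and (2) follows by continuity together with upper semicontinuity of fibre cardinality. The main obstacle is precisely this last step: showing that the continuous extension is well-defined at boundary points $\cbar' \in Z \cap H$ where the covering degenerates. A heavier alternative would invoke resolution of indeterminacies or Stein factorisation, but the direct compactness argument suffices.
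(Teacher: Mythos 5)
Your overall architecture (covering map off a bad locus, contractibility of $\Dte$ to split into $d$ sections, properness/compactness to extend to infinity) is the same as the paper's, but there is a genuine gap in how you construct $C$, and it is not a technicality: it is the crux of the proposition. You set $Z$ to be the union of the images of the singular locus, the positive-dimensional-fibre locus, \emph{and the branch locus}, and then take $C = H \setminus Z$. In the generality of the statement (and in the intended applications) the hyperplane $H$ may be entirely contained in the branch locus -- indeed the points at infinity are \emph{typically} ramification points of $\pi$, which is exactly why the maps $\alpha_i$ are asserted to be only continuous, not analytic, on $\Dte^* \setminus \Dte$ (compare the toy example $z \mapsto z^2+1$ at $\infty$, and the $z \mapsto z^{1/e}$ ramification discussed in Remark~\ref{sheaf remark}). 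So your $C$ can be empty, and your fallback ``in the applications one checks directly that $H \not\subseteq Z$'' is not available and is not how density should be obtained. The paper instead removes from $H$ only (i) the points of $H$ with infinite fibre and (ii) the intersection with $H$ of the Zariski closure of the \emph{affine} bad locus $\C^n \setminus Y_c$; both are of codimension at least $2$ in $\P_n$ by fibre-dimension arguments (Propositions~\ref{prop:theta-finite} and~\ref{prop:theta-covering}), hence proper closed in $H$, so $C$ is automatically dense. Points of $C$ are thus allowed to be branch points, as long as their fibres are finite.

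This mislocation of the bad locus also produces an internal inconsistency in your extension step. If $\cbar \notin Z$ with $Z$ closed and containing the branch locus, then a small enough $D^*$ misses $Z$ entirely, every boundary point is unramified, and the extension is trivial (but $C$ may be empty); whereas your later case ``$\cbar' \in Z \cap H$'' cannot occur under your own definitions, yet it is precisely the case that matters, and for it you only gesture at ``an \'etale-local model.'' The paper supplies the missing argument as a purely topological lemma (Lemma~\ref{lem:extend-local-inverse}): a continuous section of a map between compact Hausdorff spaces extends uniquely to boundary points $y$ provided $\rho^{-1}(y)$ is finite and $y$ has arbitrarily small neighbourhoods $N$ with $N \cap \Delta$ connected -- a property the sector domains are designed to have. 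Finally, your claim that condition (2) at boundary points ``follows by continuity together with upper semicontinuity of fibre cardinality'' is not right as stated; the paper needs Remmert's open mapping theorem on the finite-fibre locus to rule out a fibre point over $\cbar'$ that is missed by all $d$ extended sections.
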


Note that the algebraic functions of \cite{brown-masser,daquino--fornasiero-terzo} are simply the coordinates of the maps $\alpha_i$ when restricted to maps $\alpha_i : V \to \mathbb{G}_{\rm m}^n$; our sector domains are definite instances of the ``cones'' mentioned in those papers. The additional precision is to avoid potential ambiguities. For instance, in \cite[p1397]{daquino--fornasiero-terzo} the authors claim that every algebraic function is asymptotically homogeneous, which however may be false if the cones include real lines pointing outside of $C$.

\begin{remark}\label{sheaf remark}
    While Proposition~\ref{alg map prop} is fairly detailed in the use of specific polydiscs and sector domains, one may also read it as the construction of a particular sheaf.
    
    Recall that the local analytic sections of the projection $\Vbar \to \P_n$ with open domains form a sheaf of functions over $\P_n$. One can think of the sheaf as the collection of all continuations of any one local section, and so as a multivalued analytic function. The collection of the maps $\alpha_i$ from Proposition~\ref{alg map prop}, as $\theta, \eta$ vary, is essentially the composition of this multivalued function with the projection $\Vbar \to A$.

    First consider the map $\alpha_1$ on the domain $D_{(0,2\pi)}$, which we now write as $\alpha^1_{(0,2\pi)}$ and all the $d$ maps $\alpha_k$ on $D_{(\pi,3\pi)}$. For each $k$, the set $\left\{\zbar \in D_{(\pi,2\pi)} : \alpha^1_{(0,2\pi)}(\zbar) = \alpha_k(\zbar) \right\}$ is clopen by the uniqueness of analytic continuation, so  exactly one of these $\alpha_k$ must agree with $\alpha^1_{(0,2\pi)}$ on $D_{(\pi,2\pi)}$, and we write it as $\alpha^1_{(\pi,3\pi)}$. 

    Similarly, analytic continuation determines a unique branch of $\alpha$ on $D_{(j\pi,(j+2)\pi)}$ for each $j \in \Z$, which we write as $\alpha^1_{(j\pi,(j+2)\pi)}$. Continuity of each branch of $\alpha$ on $\Dte^*$ ensures that $\alpha^1$ extends uniquely to $D^*\setminus D$, and is single valued there.
    Since there are only $d$ branches of $\alpha$ at each point, for some integer $e$ with $0< e< d$ we must have 
    $\alpha^1_{(2e\pi ,(2e+2)\pi)} = \alpha^1_{(0, 2\pi)}$. So $\alpha^1$ is a multivalued map $D^* \to A$ which is $e$-valued on $D$ and single-valued on $D^* \setminus D$. This ramification is exactly analogous to the function $z \mapsto z^{1/e}$ about the point at infinity in $\P_1$.
 
    If $e<d$ then we can continue with the other branches of $\alpha_k$ to get connected multivalued maps $\alpha^1,\ldots,\alpha^{d'}:D^* \to A$, corresponding to the $d'$ different values of $\alpha(\cbar)$. We can then consider these maps together as a single $d$-valued map $\alpha: D^* \to A$.

    We can also patch together the maps defined on sector domains around different points $\cbar \in C$. Overall, the sector domain cover an open subset $\Omega^*$ of $\P_n$ with $\Omega^* \setminus \C^n = C$, and the maps $\alpha : \Dte \to A$ generate a sheaf $\alphabar$ representing a $d$-valued analytic map on $\Omega = \Omega^* \cap \C^n$. The additional information in Proposition~\ref{alg map prop} encodes how each branch extends continuously to $C$.
    
   Ahlfors explains the construction of algebraic functions in detail, although only in a single variable, in \cite[pp284--308]{ahlfors}. 
    
    Later we will do something similar with other analytic maps.

\end{remark}

The rest of this section offers a fairly detailed proof of Proposition~\ref{alg map prop}, but we stress that the construction is folklore and that the techniques used here will not be relevant for the rest of the paper, and so the reader may well skip to the next section.

\subsection{Covering maps}
First, we recall how generically finite maps between irreducible varieties restrict to topological covering maps. These are classical facts, and we refer the reader to \cite[\S IV.2]{hartshorne} and \cite[\S II.6]{Shafarevich} for more details. In the following, let $X$, $Y$ be complete irreducible algebraic varieties of the same dimension, and $\rho : X \to Y$ be a \emph{surjective} morphism.

\begin{proposition}\label{prop:theta-finite}
    Let $Y_f = \{y \in Y : |\rho^{-1}(y)| < \infty\}$. Then $Y_f$ is Zariski open dense in $Y$ and $Y \setminus Y_f$ has codimension at least $2$ in $Y$. 
\end{proposition}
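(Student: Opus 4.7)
The plan splits into two parts: showing $Y_f$ is Zariski open and dense, which is straightforward, and showing its complement has codimension at least $2$, which is the substantive point.

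For the first part, I would observe that since $\rho$ is surjective with $\dim X = \dim Y$, the generic fibre of $\rho$ is zero-dimensional, hence finite (it is complete over $\C$, as $\rho$ is proper by completeness of $X$ and separatedness of $Y$). Thus $Y_f$ contains a non-empty Zariski open subset, so it is Zariski dense. Upper semi-continuity of fibre dimension (see e.g.~\cite[\S II.6]{Shafarevich}) then implies that $Y \setminus Y_f = \{y \in Y : \dim \rho^{-1}(y) \geq 1\}$ is Zariski closed, so $Y_f$ is also Zariski open.

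For the codimension bound I would argue by contradiction. Suppose some irreducible component $Z$ of $Y \setminus Y_f$ has $\dim Z = n-1$, where $n = \dim X = \dim Y$. Decompose $\rho^{-1}(Z) = W_1 \cup \cdots \cup W_r$ into irreducible components and set
\[Z_i := \{z \in Z : \dim(W_i \cap \rho^{-1}(z)) \geq 1\},\]
each of which is Zariski closed in $Z$ by upper semi-continuity applied to the restriction $\rho|_{W_i}$. Since every fibre of $\rho$ over $Z$ is positive-dimensional and is the union of its intersections with the $W_i$, at least one of these intersections is positive-dimensional, so $Z = Z_1 \cup \cdots \cup Z_r$. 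Irreducibility of $Z$ forces $Z_i = Z$ for some $i$; setting $W := W_i$, the restriction $\rho|_W : W \to Z$ is then surjective with every fibre of dimension at least $1$. The fibre dimension theorem gives $\dim W \geq \dim Z + 1 = n$, so $W = X$ by irreducibility of $X$. But then $Y = \rho(X) = \rho(W) \subseteq Z$, contradicting $\dim Z < n$.

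The main obstacle I anticipate is precisely this last step: the generic fibre of the restriction $\rho|_{W_i}$ could a priori be smaller than the ambient fibre of $\rho$, so one cannot blindly apply fibre dimension to $\rho$ and $Z$. The Baire-style covering trick via the sets $Z_i$, combined with the upper semi-continuity statements on each $W_i$ separately and the irreducibility of $Z$, is what lets us pin down a single component on which fibre dimension is uniformly positive, and it is the only place where the codimension $2$ bound (as opposed to codimension $1$) is genuinely earned.
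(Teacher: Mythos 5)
Your proof is correct and follows essentially the same route as the paper's: both parts rest on semicontinuity of fibre dimension (valid on the target because completeness makes $\rho$ a closed morphism) together with the fibre-dimension inequality, which the paper compresses into a citation of Chevalley's theorem. Your explicit decomposition of $\rho^{-1}(Z)$ into components and the covering trick with the $Z_i$ just fill in, correctly, the detail that the paper delegates to that citation.
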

\begin{proof}
    Since $X$, $Y$ are complete, $f$ is a closed morphism. Therefore, the conclusion is an immediate consequence of Chevalley's semi-continuity theorem \cite[Cor.\ 13.1.5]{EGA-IV-3} (or \cite[Thm.~1.25]{Shafarevich}, after correcting the statement by adding the word `closed'): the set of points $y \in Y$ such that $\rho^{-1}(y)$ has dimension at least $1$ is a proper and Zariski-closed subset of $Y$. By the same theorem, since $\rho^{-1}(Y \setminus Y_f)$ has dimension at most $\dim(Y) - 1$, and all the fibres over $Y \setminus Y_f$ have dimension at least $1$, we must have $\dim(Y \setminus Y_f) < \dim(Y) - 1$.
\end{proof}

\begin{proposition}\label{prop:theta-covering}
    Let $Y_c = \{y \in Y_f : y \text{ is non-singular}, |\rho^{-1}(y)|=\deg(\rho)\}$. Then $Y_c$ is Zariski open dense in $Y_f$ and the restriction $\rho\restrict{\rho^{-1}(Y_c)}$ is a covering map with respect to the complex topology.
\end{proposition}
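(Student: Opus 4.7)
The proposition decomposes into two claims: (i) $Y_c$ is Zariski open dense in $Y_f$, and (ii) the restriction $\rho\restrict{\rho^{-1}(Y_c)}$ is a topological covering map.

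For (i), the approach is to combine generic smoothness with upper semi-continuity of fibre cardinality. Generic smoothness in characteristic zero, applied to the dominant morphism $\rho$ between irreducible varieties of the same dimension, produces a non-empty Zariski open $V \subseteq Y$ over which $\rho$ is smooth, hence étale (since smooth of relative dimension zero). Intersecting $V$ with $Y_f$ and the smooth locus $Y_{ns}$, we may assume $V \subseteq Y_f \cap Y_{ns}$; since a finite étale morphism of degree $d$ has fibres of cardinality exactly $d$, we get $V \subseteq Y_c$, giving density. For openness of $Y_c$ in $Y_f$, note that $Y_{ns}$ is Zariski open in $Y$, and the condition $|\rho^{-1}(y)| = d$ for a finite morphism of degree $d$ carves out a Zariski open subset of $Y_f$ by upper semi-continuity of fibre length combined with the fact that the locus contains the non-empty $V$.

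For (ii), the central step is to show $\rho$ is étale at every point of $\rho^{-1}(Y_c)$: with $Y$ smooth at $y \in Y_c$ and $|\rho^{-1}(y)|=d$, the scheme-theoretic fibre is forced to be reduced of length $d$, so $\rho$ is unramified (hence étale in characteristic zero) at each preimage of $y$. Étaleness plus the complex analytic inverse function theorem then ensures that $\rho$ is a local biholomorphism at every point of $\rho^{-1}(Y_c)$. To upgrade this to a covering map, I would use a standard argument: fix $y_0 \in Y_c$ with $\rho^{-1}(y_0) = \{x_1, \dots, x_d\}$, select pairwise disjoint complex-open neighbourhoods $W_i \ni x_i$ on which $\rho$ is a biholomorphism onto its image, and use closedness of $\rho$ (guaranteed by completeness of $X$ and $Y$) to shrink $y_0$'s neighbourhood $V \subseteq Y_c$ so that $\rho^{-1}(V) \subseteq \bigsqcup_i W_i$; replacing each $W_i$ by $W_i \cap \rho^{-1}(V)$ then yields the desired trivialisation.

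The main obstacle is the first step of (ii), verifying étaleness at every point of $\rho^{-1}(Y_c)$ and not merely on a dense sub-open set. This is the slightly delicate algebro-geometric point, requiring a short analysis of fibre lengths for finite morphisms onto a smooth target. Once this is in hand, the topological conclusion is essentially automatic, and the plan keeps the analytic input to a minimum, confined to the complex analytic inverse function theorem and some elementary Hausdorff-space manipulations.
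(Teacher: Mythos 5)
Your proposal is correct and takes essentially the same route as the paper, which likewise reduces to finiteness of $\rho$ over $Y_f$ (proper plus quasi-finite implies finite), cites the standard fact that over the normal locus the set of points with fewer than $\deg(\rho)$ preimages is Zariski closed, and deduces the covering property from the implicit/inverse function theorem. One point to tighten: \emph{upper semi-continuity of fibre length} governs scheme-theoretic length rather than the number of distinct preimages, so the openness of $\{y : |\rho^{-1}(y)| = d\}$ should instead be extracted from the same primitive-element/discriminant analysis over the normal locus that you defer to part (ii) --- and you should first record that $\rho$ restricted over $Y_f$ is indeed finite, which your argument uses implicitly.
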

\begin{proof}
    Since $X$, $Y$ are complete, $\rho$ is proper, and so is $\rho\restrict{\rho^{-1}(Y_f)}$ as $Y_f$ is open. The latter map is quasi-finite (that is, it has finite fibres), hence it is finite in the sense of algebraic geometry \cite[Thm.\ 8.11.1]{EGA-IV-3}. We also know that the singular points of $Y$ form a proper Zariski closed subset of $Y$, and they are normal. By \cite[Theorem 2.29]{Shafarevich}, the normal points of $Y_f$ such that $|\rho^{-1}(y)| < \deg(\rho)$ form a proper Zariski closed subset of $Y_f$, and in particular do so the non-singular ones. One can then verify that $\rho\restrict{\rho^{-1}(Y_c)}$ is a covering map via the implicit function theorem (see \cite[p.\ 143]{Shafarevich}).
\end{proof}

Note in particular that $\rho\restrict{\rho^{-1}(Y_c)}$ is an open map in both complex and Zariski topologies.

\begin{corollary}\label{cor:sections}
    Let $D \subseteq Y_c$ be simply connected. Let $d = \deg(\rho)$. Then there are exactly $d$ complex analytic sections $\iota_1,\dots,\iota_d : D \to X$ of $\rho$, and $\rho^{-1}(D) = \bigcup_{i=1}^d \iota_i(D)$.
\end{corollary}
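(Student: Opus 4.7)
\textbf{Proof plan for Corollary~\ref{cor:sections}.} The plan is to deduce this directly from Proposition~\ref{prop:theta-covering} via standard covering space theory. First I would recall that by Proposition~\ref{prop:theta-covering}, the restriction $\rho\restrict{\rho^{-1}(Y_c)} : \rho^{-1}(Y_c) \to Y_c$ is a topological covering map with respect to the complex topology, and by the defining property of $Y_c$ every fibre has cardinality exactly $d = \deg(\rho)$.

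Next, since $D \subseteq Y_c$ is simply connected (and locally path-connected, being an open subset of a complex manifold), the pulled-back covering $\rho^{-1}(D) \to D$ is trivial: classical covering space theory gives a homeomorphism $\rho^{-1}(D) \cong D \times F$ over $D$, where $F$ is the fibre, a discrete set of cardinality $d$. Each element of $F$ yields a continuous section $\iota_i : D \to \rho^{-1}(D) \subseteq X$ of $\rho$, and these $d$ sections together exhaust $\rho^{-1}(D) = \bigsqcup_{i=1}^d \iota_i(D)$ as a disjoint union. Conversely, any continuous section of $\rho$ over the connected set $D$ is determined by its value at a single point, so there are exactly $d$ such sections.

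It remains to upgrade continuity of each $\iota_i$ to complex analyticity. Here I would use that every point of $Y_c$ is non-singular in $Y$, and moreover the preimage points in $X$ are also non-singular (being isolated in their fibre, so smoothness of $\rho$ follows from the fact that a quasi-finite proper map between complex manifolds of the same dimension is, at a point of full fibre cardinality, a local biholomorphism — equivalently, by the complex implicit function theorem, as cited in the proof of Proposition~\ref{prop:theta-covering}). Thus each sheet of the covering is locally the graph of the inverse of a local biholomorphism, hence $\iota_i$ is complex analytic on $D$.

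The main technical point — really the only one — is verifying that the sections are analytic and not merely continuous, and this is already packaged into the covering map statement of Proposition~\ref{prop:theta-covering}, since the local trivialisations there come from the implicit function theorem in the complex analytic category. Everything else is a direct application of the lifting criterion for covering spaces to the simply connected base $D$.
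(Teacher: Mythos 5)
Your proposal is correct and follows essentially the same route as the paper: both deduce the result from the covering-map property established in Proposition~\ref{prop:theta-covering} by standard covering-space theory over the simply connected base $D$, with analyticity of the sections coming from the fact that the local trivialisations are biholomorphisms via the implicit function theorem. The only cosmetic difference is that you invoke the triviality of coverings over simply connected (locally path-connected) bases wholesale, whereas the paper carries out the continuation-of-a-local-section and clopen-agreement-set argument by hand.
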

\begin{proof}
    This is a standard algebraic topology result. Fixed $y \in D$ and $x \in \rho^{-1}(y)$, there is an analytic homeomorphism from a neighbourhood of $y$ to a neighbourhood of $x$. Since $D$ is simply connected, such a homeomorphism has a continuation to all of $D$. Any two sections $\iota$, $\iota'$ such that $\iota(y)=\iota'(y)=x$ have the property that $\{ y' \in D : \iota(y') = \iota'(y') \}$ is both open (by analytic continuation) and closed (by continuity of the map $\iota \times \iota' : D \times D \to X \times X$ and the fact that the diagonal of $X \times X$ is closed, since $X$ is Hausdorff), thus they coincide. In turn, there is exactly one section for every point in the fibre $\rho^{-1}(y)$. By repeating the argument on all $y \in D$, one sees that images of these sections cover all of $\rho^{-1}(D)$.
\end{proof}

\subsection{Extending sections of covering maps}
We now wish to extend continuously the sections $\iota_i$ from a simply connected $\Delta \subseteq Y_c$ to some larger domain $\Delta^*$ which may fall outside of $Y_c$. In Proposition~\ref{alg map prop}, these will be the points at infinity in $\Dte^* \setminus \Dte$.

Whether this can be done depends on the topological properties of $\Delta^*$, rather than the algebraic properties, so we will work in an abstract topological setting.

\begin{lemma}\label{lem:extend-local-inverse}
    Let $X$, $Y$ be compact Hausdorff topological spaces, $\rho : X \to Y$ be a continuous function, and $\iota : \Delta \to X$ be a continuous section of $\rho$ on some $\Delta \subseteq Y$. Let $\Delta^* \subseteq Y$ be such that $\Delta \subseteq \Delta^* \subseteq \overline{\Delta}$.
    
    Suppose that for all $y \in \Delta^* \setminus \Delta$:
    \begin{itemize}
        \item $\rho^{-1}(y)$ is finite;
        \item there are arbitrarily small neighbourhoods $N$ of $y$ such that $N \cap \Delta$ is connected.
    \end{itemize}
    Then $\iota$ extends (uniquely) to a continuous section $\iota^* : \Delta^* \to X$ of $\rho$.
\end{lemma}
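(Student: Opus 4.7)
The plan is to extend $\iota$ pointwise by defining $\iota^*(y)$, for each $y \in \Delta^* \setminus \Delta$, as the unique cluster value of $\iota$ as the argument approaches $y$ inside $\Delta$. The key ingredients are that finiteness of $\rho^{-1}(y)$ together with Hausdorffness of $X$ force $\rho^{-1}(y)$ to be discrete in the subspace topology, while the small-connected-neighborhood hypothesis forces the cluster set to be connected; combining these gives a single limit point.

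First I would introduce the cluster set
\[ L(y) := \bigcap_{N} \overline{\iota(N \cap \Delta)}, \]
where $N$ ranges over open neighborhoods of $y$ in $Y$. Since $y \in \overline{\Delta}$, each $N \cap \Delta$ is non-empty, so each $\overline{\iota(N \cap \Delta)}$ is a non-empty closed, hence compact, subset of $X$; these form a filter base, so by compactness $L(y) \neq \emptyset$. Continuity of $\rho$ together with $\rho \circ \iota = \mathrm{id}_\Delta$ forces $L(y) \subseteq \rho^{-1}(y)$. To show $L(y)$ is a single point I would restrict the intersection to the cofinal family of $N$ with $N \cap \Delta$ connected (this yields the same $L(y)$, since such $N$ form a neighborhood base by hypothesis). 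For these $N$, $\overline{\iota(N \cap \Delta)}$ is compact and connected (continuous image of a connected set, then closure), and a standard argument using normality of $X$ shows that the intersection of a filter base of compact connected sets is connected: otherwise, separate a hypothetical disconnection by disjoint opens $U,V$, note by compactness some member of the filter base must lie in $U \cup V$, and observe that this contradicts its connectedness since it meets both $U$ and $V$. Thus $L(y)$ is a connected subset of $\rho^{-1}(y)$, which is discrete as a finite subspace of a Hausdorff space, so $L(y)$ is a singleton. I would define $\iota^*(y)$ as its unique element and set $\iota^* = \iota$ on $\Delta$.

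For continuity of $\iota^*$ at a point $y_0 \in \Delta^*$, the decisive observation is that the filter base $\{\overline{\iota(N \cap \Delta)}\}$ converges to $\iota^*(y_0)$ in the strong sense: for any open $U \supseteq \iota^*(y_0)$ in $X$, there exists some $N$ with $\overline{\iota(N \cap \Delta)} \subseteq U$, else $\{\overline{\iota(N \cap \Delta)} \setminus U\}$ would be a filter base of non-empty compacts with empty intersection. Given open $U \ni \iota^*(y_0)$, I would first use normality of $X$ to choose open $U'$ with $\iota^*(y_0) \in U' \subseteq \overline{U'} \subseteq U$, then pick a neighborhood $N$ of $y_0$ with $\overline{\iota(N \cap \Delta)} \subseteq U'$ (from the observation above when $y_0 \notin \Delta$, or directly from continuity of $\iota$ when $y_0 \in \Delta$). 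For any $y \in N \cap \Delta^*$: if $y \in \Delta$ then $\iota^*(y) = \iota(y) \in U$; if $y \notin \Delta$ then applying the definition of $L(y)$ with the neighborhood $N$ of $y$ gives $\iota^*(y) \in \overline{\iota(N \cap \Delta)} \subseteq \overline{U'} \subseteq U$. Uniqueness of $\iota^*$ is immediate: any continuous section $\iota'$ extending $\iota$ must satisfy $\iota'(y) \in L(y) = \{\iota^*(y)\}$ by taking limits along nets in $\Delta$ converging to $y$.

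The hard part will be the singleness of $L(y)$: without the small-connected-neighborhood hypothesis, approaches along different routes could leave $\iota$ with distinct cluster points in the finite set $\rho^{-1}(y)$. The hypothesis is designed precisely to inject enough connectedness into the filter base to rule this out, and once singleness is established, continuity and uniqueness are routine consequences of compactness and normality.
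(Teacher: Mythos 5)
Your proof is correct. The overall strategy coincides with the paper's: pin down a unique candidate value $\iota^*(y)$ in the fibre using connectedness of $N\cap\Delta$ together with finiteness of $\rho^{-1}(y)$, then get continuity from the containment $\iota^*(N\cap\Delta^*)\subseteq\overline{\iota(N\cap\Delta)}$ plus regularity of $X$, with uniqueness forced for free. Where you genuinely diverge is in how the singleton is produced. The paper first uses Hausdorffness to choose an open set $B\supseteq\rho^{-1}(y)$ whose components isolate the fibre points, proves by a compactness argument that $\rho^{-1}(N)\subseteq B$ for small $N$, and then traps the connected set $\iota(N\cap\Delta)$ in a single component $B_0$, so that $\overline{\iota(N\cap\Delta)}\cap\rho^{-1}(y)$ is one point. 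You instead form the cluster set $L(y)=\bigcap_N\overline{\iota(N\cap\Delta)}$ and invoke the standard fact that a downward-directed family of nonempty compact connected sets in a compact Hausdorff space has nonempty connected intersection; since $L(y)$ lands in the finite (hence discrete) fibre, it is a singleton. Your route buys a cleaner logical packaging (the "one point per component" bookkeeping and the claim about $\rho^{-1}(N)\subseteq B$ disappear, absorbed into filter-base compactness), at the cost of importing the compact-connected-intersection lemma, which the paper's more hands-on separation argument avoids. One small bonus of your version: it shows any \emph{continuous extension} of $\iota$, not merely a continuous extension that is a section, must agree with $\iota^*$. Do make sure the neighbourhoods $N$ in the continuity step are taken open, so that $N$ is also a neighbourhood of each $y\in N\cap(\Delta^*\setminus\Delta)$; with that understood, the argument is complete.
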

\begin{proof}
    Fix some $y \in \Delta^* \setminus \Delta$. Since $X$ is Hausdorff, we may find an open neighbourhood $B = B_y$ of $\rho^{-1}(y)$ where each connected component contains exactly one point of $\rho^{-1}(y)$.

    \begin{claim*}
        For every neighbourhood $N'$ of $y$, there is a neighbourhood $N \subseteq N'$ of $y$ such that $\rho^{-1}(N) \subseteq B$ and $N \cap \Delta$ is connected.
    \end{claim*}
    \begin{proof}
        Since $Y$ is compact Hausdorff, $N'$ contains closed neighbourhoods $N'' \subseteq N'$ of $y$. Suppose by contradiction that $\rho^{-1}(N'') \nsubseteq B$ for all such $N''$'s. By compactness of $X$, the intersection of the (closed) sets $\rho^{-1}(N'') \setminus B$ contains some $x \notin B \supseteq \rho^{-1}(y)$; since $Y$ is Hausdorff, we may pick a closed neighbourhood of $y$ not containing $\rho(x)$, a contradiction. By assumption, there is a neighbourhood $N \subseteq N''$ of $y$ such that $N \cap \Delta$ is connected, and of course $\rho^{-1}(N) \subseteq \rho^{-1}(N'') \subseteq B$.
        \let\oldqedsymbol=\qedsymbol
        \renewcommand{\qedsymbol}{\oldqedsymbol\textsubscript{Claim}}
    \end{proof}

    Let $N$ be any neighbourhood of $y$ given by the claim. Since $\Delta^* \subseteq \overline{\Delta}$ and $N$ is a neighbourhood of $y$, $N \cap \Delta$ is non-empty. Thus $\iota(N \cap \Delta)$ is contained in exactly one connected component $B_0$ of $B$. By construction of $B$ we have $\overline{B_0} \cap (B \setminus B_0) = \emptyset$, thus $\overline{\iota(N \cap \Delta)} \cap \rho^{-1}(y) \subseteq \overline{B_0} \cap \rho^{-1}(y) = \{x_y\}$ for some $x_y \in \rho^{-1}(y)$.

    If $\iota^*$ is a continuous extension of $\iota$, since $y \in N \cap \overline{\Delta} \subseteq \overline{N \cap \Delta}$, we must have $\iota^*(y) \in \overline{\iota(N \cap \Delta)}$. In turn, if $\iota^*$ is a also section of $\rho$ we must have $\iota^*(y) \in \overline{\iota(N \cap \Delta)} \cap \rho^{-1}(y)$, hence $\iota^*(y) = x_y$.
    
    Therefore, if continuous sections extending $\iota$ exist, they are unique. As for their existence, define the section $\iota^* \supseteq \iota$ by letting $\iota^*(y) = x_y$ for $y \in \Delta^* \setminus \Delta$. It remains to check that $\iota^*$ is continuous.

    Let $N'$ be an open neighbourhood of a $y \in \Delta^* \setminus \Delta$. By the claim and the above argument, there is a neighbourhood $N \subseteq N'$ such that $\iota^*(y) \in \overline{\iota(N \cap \Delta)} \subseteq \overline{\iota(N' \cap \Delta)}$. By repeating this for every $y' \in N' \cap (\Delta^* \setminus \Delta)$, we get $\iota^*(N' \cap \Delta^*) \subseteq \overline{\iota(N' \cap \Delta)}$.

    Now fix $y \in \Delta^* \setminus \Delta$ and let $B$ be a neighbourhood of $\iota^*(y)$. Since $X$ is compact Hausdorff, there is a closed neighbourhood $B' \subseteq B$ of $\iota^*(x)$. By continuity of $\iota$, we can find a neighbourhood $N'$ of $y$ such that $\iota(N' \cap \Delta) \subseteq B'$. In turn, $\iota^*(N') \subseteq \overline{\iota(N' \cap \Delta)} \subseteq B' \subseteq B$. By definition, this means that $\iota^*$ is continuous at $y$.
\end{proof}

The above can now be applied in the setting of maps between algebraic varieties. The following statement establishes a natural condition on pairs of domains $\Delta \subseteq \Delta^*$ guaranteeing the existence (and uniqueness) of such extensions.

\begin{proposition}\label{prop:extended-sections}
    Let $X$ be a complete irreducible algebraic variety and $\rho : X \to \P_n$ be a morphism of degree $d$. Let $\Delta \subseteq \Delta^* \subseteq \P_n$ be sets with $\Delta$ simply connected, $\Delta^* \subseteq \overline{\Delta}$ (where $\overline{\Delta}$ is the topological closure of $\Delta$), such that for all $y \in \Delta^*$:
    \begin{itemize}
        \item $\rho^{-1}(y)$ is finite, and of cardinality $d$ when $y \in \Delta$;
        \item there are arbitrarily small neighbourhoods $N$ of $y$ such that $N \cap \Delta$ is connected.
    \end{itemize}
    Then there are exactly $d$ continuous sections $\iota_1,\dots,\iota_d : \Delta^* \to X$ of $\rho$; moreover, they are complex analytic on $\Delta$ and $\rho^{-1}(\Delta^*) = \bigcup_{i=1}^d \iota_i(\Delta^*)$.
\end{proposition}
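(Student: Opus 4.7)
The plan is to construct the sections $\iota_1^*,\ldots,\iota_d^*$ in three stages: build them analytically on $\Delta$, extend them continuously to $\Delta^*$, and then verify that the extensions collectively cover $\rho^{-1}(\Delta^*)$.

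First I would apply Corollary~\ref{cor:sections} on $\Delta$. Since $\P_n$ is smooth everywhere and the hypothesis gives $|\rho^{-1}(y)|=d$ for every $y \in \Delta$, we have $\Delta \subseteq Y_c$, so simple connectedness of $\Delta$ delivers $d$ analytic sections $\iota_1,\dots,\iota_d : \Delta \to X$ with pairwise disjoint images whose union is $\rho^{-1}(\Delta)$.

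Next I would extend each $\iota_j$ to a continuous $\iota_j^* : \Delta^* \to X$ by applying Lemma~\ref{lem:extend-local-inverse}: its hypotheses (compact Hausdorff ambient spaces $X$ and $\P_n$, finite fibres over $\Delta^* \setminus \Delta$, and small neighbourhoods $N$ of such points with $N \cap \Delta$ connected) are given directly. Pairwise distinctness of the $\iota_j^*$ as functions is free from distinctness on $\Delta$. For the \emph{exactly $d$} claim, any continuous section $\iota : \Delta^* \to X$ of $\rho$ restricts to a continuous section over $\Delta$, which is automatically analytic because $\rho$ restricts to a covering map over $\Delta \subseteq Y_c$ by Proposition~\ref{prop:theta-covering}; by Corollary~\ref{cor:sections} it coincides with some $\iota_j$ on $\Delta$, and then by the uniqueness half of Lemma~\ref{lem:extend-local-inverse} with $\iota_j^*$ on all of $\Delta^*$.

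The main obstacle is showing $\rho^{-1}(\Delta^*) = \bigcup_j \iota_j^*(\Delta^*)$ at points $y \in \Delta^* \setminus \Delta$. Fix such a $y$, fix $x \in \rho^{-1}(y) = \{x_1,\ldots,x_k\}$ (say $x = x_m$), and choose pairwise disjoint open neighbourhoods $B_i \ni x_i$ with $\overline{B_i} \cap \rho^{-1}(y) = \{x_i\}$. Since $\rho$ is proper (being a morphism from a complete variety), $\rho\bigl(X \setminus \bigsqcup_i B_i\bigr)$ is closed and misses $y$, providing a neighbourhood $N_1$ of $y$ with $\rho^{-1}(N_1) \subseteq \bigsqcup_i B_i$. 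I then shrink to $N_2 \subseteq N_1$ with $N_2 \cap \Delta$ connected (possible by hypothesis), and use the local structure of the finite morphism $\rho$ near each $x_i$ — specifically, ramification indices $e_i \geq 1$ summing to $d$ — to ensure, after a further shrink, that each $B_i \cap \rho^{-1}(N_2 \cap \Delta)$ is non-empty. Each image $\iota_j(N_2 \cap \Delta)$ is connected, hence lies in a single $B_i$; since the $d$ sections collectively cover the $d$-element fibre over every $y' \in N_2 \cap \Delta$, each $B_i$ — and in particular $B_m$ — must receive the image of at least one $\iota_{j_0}$. The extension recipe from the proof of Lemma~\ref{lem:extend-local-inverse} then pins $\iota_{j_0}^*(y)$ to the unique point of $\overline{B_m} \cap \rho^{-1}(y)$, which is $x$. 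I anticipate the most delicate step is precisely this local branched-cover analysis showing each $B_i$ meets $\rho^{-1}(N_2 \cap \Delta)$; everything else flows directly from the already established lemma and corollary.
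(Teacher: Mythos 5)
Your first two stages coincide exactly with the paper's proof: apply Corollary~\ref{cor:sections} on $\Delta$ (noting $\Delta \subseteq Y_c$ because $\P_n$ is non-singular and the fibres over $\Delta$ have full cardinality $d$), then extend each section uniquely to $\Delta^*$ via Lemma~\ref{lem:extend-local-inverse}; your argument for ``exactly $d$'' (restrict, use the covering structure over $\Delta$, then invoke uniqueness of the extension) is also sound and only slightly more explicit than what the paper writes. Where you diverge is the surjectivity claim $\rho^{-1}(\Delta^*) = \bigcup_i \iota_i(\Delta^*)$ at points of $\Delta^*\setminus\Delta$. The paper argues by contradiction: if $x \in \rho^{-1}(y)$ is missed by all sections, take a small open $B \ni x$ inside $\rho^{-1}(Y_f)$ avoiding the $\iota_i(y)$; by Remmert's open mapping theorem $\rho(B)$ is an open neighbourhood of $y$, which forces $B$ to meet some $\iota_j(\Delta)$ and hence $\overline{B}$ to contain $\iota_j(y)$, a contradiction. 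Your route is a constructive dual of this: you separate the fibre into disjoint $B_i$, localize with properness, and argue each $B_i$ receives a section. That works, but the ``delicate step'' you flag --- that each $B_i$ meets $\rho^{-1}(N_2\cap\Delta)$ --- is precisely the assertion that $\rho$ is an open map at each $x_i$, and your proposed justification via ``ramification indices $e_i \geq 1$ summing to $d$'' is both more than you need and not quite available: the sum-of-local-multiplicities formula presupposes flatness of $\rho$, which is not among the hypotheses. What you actually need is only that $\rho(B_i)$ is a neighbourhood of $y$, i.e.\ $e_i \geq 1$ in your notation, and this is exactly Remmert's open mapping theorem for the quasi-finite proper map $\rho\restrict{\rho^{-1}(Y_f)}$ (equivalently, openness of a finite dominant morphism onto the normal variety $Y_f$). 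With that citation substituted in, your argument closes and is essentially equivalent to the paper's, just phrased positively rather than by contradiction.
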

\begin{proof}
    Since $\P_n$ is non-singular, on setting $Y = \P_n$ we have that $\Delta \subseteq Y_c$. By Corollary~\ref{cor:sections}, there are sections $\iota_1',\dots,\iota_d' : \Delta \to X$ satisfying the conclusion with $\Delta$ in place of $\Delta^*$.
    
    Since $X$ and $\P_n$ are complete complex varieties, they are compact Hausdorff spaces, so by Lemma~\ref{lem:extend-local-inverse}, such sections can be extended uniquely to continuous sections $\Delta^* \to X$.

    It remains to verify that for every $y \in \Delta^*$, $\rho^{-1}(y) = \{\iota_1(y), \dots, \iota_d(y)\}$. Suppose by contradiction that there is $x$ such that $\rho(x) = y \in \Delta^*$, but $\rho(x) \neq \iota_i(y)$ for all $i = 1, \dots, d$. By assumption, $\Delta^* \subseteq Y_f$, and $Y_f$ is open by Proposition~\ref{prop:theta-finite}. By Remmert's open mapping theorem \cite[\S V.6, Theorem 2]{Loj-analytic-geom}, $\rho \restrict {\rho^{-1}(Y_f)}$ is an open map. If $B \subseteq \rho^{-1}(Y_f)$ is an open neighbourhood of $x$ not containing any $\iota_i(y)$, then $\rho(B)$ is an open neighbourhood of $y$ disjoint from $\Delta$, a contradiction since $\Delta^* \subseteq \overline{\Delta}$.
\end{proof}

\subsection{The algebraic maps}

We return to the setting of Proposition~\ref{alg map prop}, with the sets $D^*$, $D$, $\Dte$ and $\Dte^*$ as given in \eqref{eq:polydisc}, \eqref{eq:domain D}, \eqref{eq:sector domain}, and \eqref{eq:sector domain 2}.

\begin{proof}[Proof of Proposition~\ref{alg map prop}]
    Let $\pi$ be the projection $\Vbar \to \P_n$. Let $Y_c \subseteq \P_n$ be the set of the points $y$ such that $\pi^{-1}(y)$ is finite of cardinality $d = \deg(\pi)$ (this coincides with the set $Y_c$ of Proposition~\ref{prop:theta-covering} on letting $Y = \P_n$ and $\rho = \pi$). We let
    \be \label{eq:C}
    C := H \setminus \left(\{ y \in H : \pi^{-1}(y) \text{ is infinite}\} \cup \overline{\C^n \setminus Y_c}^{\mathrm{Zar}}\right)
    \ee
    where $\overline{(\cdot)}^{\mathrm{Zar}}$ denotes the Zariski closure in $\P_n$.
    
    By construction, $Y_c \cup C$ is a Zariski open subset of $\P_n$. Moreover, $C$ is non-empty: $\C^n \setminus Y_c$ has dimension at most $n-1$ by Proposition~\ref{prop:theta-covering}, thus $\overline{\C^n \setminus Y_c}^{\mathrm{Zar}} \cap H \subseteq \overline{\C^n \setminus Y_c}^{\mathrm{Zar}} \setminus (\C^n \setminus Y_c)$ has dimension at most $n-2$, and likewise the set of points $y$ such that $\theta^{-1}(y)$ is infinite has dimension at most $n-2$ by Proposition~\ref{prop:theta-finite}.
    
    Let $\cbar \in C$, $U_\ell$ be a chart such that $\cbar \in U_\ell$, and let $D^*$ be a polydisc centred at $\cbar$ in $U_\ell$. When $D^*$ is sufficiently small, $D^* \subseteq Y_c \cup C$, since $Y_c \cup C$ is open. We shall assume this to be the case.
    
    We now wish to apply Corollary~\ref{cor:sections}. Recall that the sector domains $\Dte$, $\Dte^*$ are simply connected. Moreover, for any $\zbar \in D^* \setminus D$, and any polydisc $D'$ centred at $\zbar$ in $U_\ell$ and such that $D' \subseteq D^*$, we clearly have $\Dte' = D' \cap \Dte$, and that is connected. Since $D_{(\theta,\eta)}$ is open and locally connected, this shows that any $\zbar\in D_{(\theta,\eta)}^*$ has arbitrarily small neighbourhoods $N$ such that $N \cap D_{(\theta,\eta)}$ is connected, as desired.

    Thus, we can apply Proposition~\ref{prop:extended-sections} and obtain sections $\iota_1, \dots, \iota_d$ of $\pi$ with domain $\Dte^*$. Their composition with the projection from $\Vbar$ to $A$ are the desired maps $\alpha_1,\dots,\alpha_d$: since $\iota_i(\zbar) = (\zbar, \alpha_i(\zbar))$, conclusion (1) follows at once from $\iota_i$ being a section of $\pi$; (2) holds by $\pi^{-1}(D) = \bigcup_{i=1}^d \iota_i(D)$; (3) follows from Corollary~\ref{cor:sections} after noticing that $\Dte \subseteq Y_c$.
\end{proof}

For comparison with Section~\ref{example section}, note that the sector domain $D_{(\theta,\theta+2\pi)}^*$ is effectively the polydisc $D^*$ with a branch cut in the variable $z_\ell$ removed. A single branch cut is sufficient: since each coordinate $z_i$ is close to a fixed multiple of $z_\ell$, the branch cut in $z_\ell$ guarantees that the other coordinates also cannot make a loop around $\cbar$. One can easily verify that the second branch cut in Section~\ref{example section} becomes redundant if we add a restriction of the form $|z_1 - z_2| < \varepsilon$, as we do here with the polydiscs.

\section{The abelian case}\label{abelian case section}

Recall that a complex abelian variety is an irreducible projective complex algebraic variety $A$ with a commutative algebraic group structure which makes it also a complex Lie group. Let $n \leteq \dim A$. The exponential map $\exp_A : \C^n \to A$ is a surjective complex analytic homomorphism. Its kernel $\Lambda$, the \emph{period lattice}, is isomorphic (as a topological group) to $\Z^{2n}$, with the discrete topology. We write $\Log_A$ for the local inverse of $\exp_A$.

In this section we prove Theorem \ref{dominant abelian case} stating that a variety $V\seq \C^n \times A$ with a dominant projection to $\C^n$ contains an exponential point. In fact, we prove a stronger result, not only showing the existence of exponential points on $V$ but also locally describing the set of almost all sufficiently large such points.

\begin{theorem}\label{thm-main-abelian} 

Let $A$ be a complex abelian variety of dimension $n$. Let $V\seq \C^n \times A$ be an irreducible subvariety of dimension $n$ with dominant projection $\pi: V \to \C^n$. Let $d:= \deg \pi$. We embed $\C^n$ in projective space $\P_n$ in the usual way \eqref{Cn into Pn}, identifying it with the chart $U_0$.

Then there is a subset $\Omega^* \subs \P_n$, which is open in the complex topology, such that $C\leteq \Omega^* \setminus \C^n$ is Zariski open dense in $\P_n\setminus \C^n$, and there is a sheaf $\mathbf{S}$ of analytic maps on $\Omega := \Omega^* \cap \C^n$ taking values in $\C^n$ with the following properties:

\begin{enumerate}
\item The image $\mathbf{S}(\Omega)$ contains $\Omega$ except possibly for a bounded strip along the boundary $\partial \Omega$.

\item For $\lambar \in \Omega \cap \Lambda$, each
value of $\mathbf{S}(\lambar)$ satisfies $(\mathbf{S}(\lambar),\exp_A(\mathbf{S}(\lambar))) \in V$. Furthermore, these are the only exponential points $(\zbar,\exp_A(\zbar))$ of $V$ with $\zbar$ in $\Omega$ (except possibly near the boundary).

\item These exponential points are locally in $d$-to-$1$ correspondence with the points of $\Lambda \cap \Omega$: $\mathbf{S}$ has $d$ branches, possibly up to translation of the argument by elements of $\Lambda$.

\item The solutions $\mathbf{S}(\lambar)$ are asymptotically translates of the lattice: for each $\cbar \in C$ and branch $S$ of $\Sbar$, there is a $\gambar \in \C^n$ such that $S(\zbar) = \zbar + \gambar + o(1)$ for $\zbar \to \cbar$.

\item In particular, the set $\Sbar(\Omega \cap \Lambda)$ is Zariski dense in $\C^n$, and the set of exponential points $\{ (\zbar,\exp_A (\zbar))\in V \}$ is Zariski dense in $V$.

\end{enumerate}

\end{theorem}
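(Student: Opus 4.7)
The plan is to construct the sheaf $\Sbar$ by patching together local inverses of the map $\zbar \mapsto \zbar - \Log_A \alpha(\zbar)$, where $\alpha$ is an analytic branch of the projection $\Vbar \to \P_n$ supplied by Proposition~\ref{alg map prop}. First I take $\Vbar \subseteq \P_n \times A$ to be the Zariski closure of $V$: it is irreducible of dimension $n$ with surjective degree-$d$ projection $\pi$ to $\P_n$. Applying Proposition~\ref{alg map prop} produces a Zariski open dense $C \subseteq H := \P_n \setminus \C^n$ and, for each $\cbar \in C$ and sector parameters $(\theta,\eta)$, continuous maps $\alpha_1,\dots,\alpha_d : \Dte^* \to A$ realising the $d$ local branches. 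I further shrink $C$ by removing the image under $\pi$ of the singular and ramification loci of $\Vbar$, so that each $\alpha_i$ is complex analytic (not merely continuous) at $\cbar$ in the chart $U_\ell$ containing $\cbar$.

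Fix one branch $\alpha = \alpha_i$ and a local branch of $\Log_A$ at $\alpha(\cbar) \in A$, yielding a bounded continuous map $L := \Log_A \circ \alpha : \Dte^* \to \C^n$ with $L(\cbar) =: \gambar$, analytic on $\Dte$. Define $F(\zbar) := \zbar - L(\zbar)$. The key asymptotic computation is that, via the coordinate change $z_j = x_j / x_0$ between the affine chart $U_0 = \C^n$ and the chart $U_\ell$ (with $x_\ell = 1$), the $z$-partials of any $C^1$ function of the $U_\ell$-chart coordinates are $O(|z_\ell|^{-1})$ as $\zbar \to \cbar$. Since $L$ is analytic at $\cbar$ in the $U_\ell$ chart, this gives $dF \to \Id$ at $\cbar$, so by the inverse function theorem $F$ is locally a biholomorphism and admits a local inverse $S := F^{-1}$. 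For $\lambar \in \Lambda$ in the domain of $S$, setting $\zbar := S(\lambar)$ gives $\Log_A \alpha(\zbar) = \zbar - \lambar$, hence $\alpha(\zbar) = \exp_A(\zbar)$, so $(\zbar, \exp_A(\zbar)) \in V$; conversely, every exponential point in the sector arises this way. Since $F - \Id = -L$ is bounded, the image $F(\Dte)$ covers $\Dte$ except for a boundary strip of bounded width, giving item (1).

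To finish, I would patch the local sections $S$ into the sheaf $\Sbar$ following the procedure in Remark~\ref{sheaf remark}: analytic continuation across overlapping sectors and continuous extension at the points of $C$ produce a $d$-valued analytic map on $\Omega$, with the $d$ branches corresponding to the $d$ choices of $\alpha_i$ (up to shifts of the argument by elements of $\Lambda$ arising from different choices of branch of $\Log_A$); this gives items (2) and (3). The expansion $S(\zbar) = \zbar + L(S(\zbar)) = \zbar + \gambar + o(1)$ as $\zbar \to \cbar$ yields item (4). For item (5), Zariski density of $\Sbar(\Omega \cap \Lambda)$ follows from this translation behavior together with Zariski density of the full-rank real lattice $\Lambda$ in $\C^n$: after comparing growth rates, any polynomial vanishing on $\Sbar(\Omega \cap \Lambda)$ would have to vanish on a Zariski-dense translate of $\Lambda$, hence be zero. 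Density of exponential points in $V$ then follows since $V$ has dominant projection to $\C^n$. The main technical obstacle is verifying the vanishing $\partial_{z_j} L \to 0$; this rests on genuine analyticity of $\alpha$ at $\cbar$ in the projective chart, which goes slightly beyond Proposition~\ref{alg map prop} and requires the additional exclusion of the ramification locus from $C$.
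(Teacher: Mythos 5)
Your overall architecture matches the paper's: extract branches $\alpha$ via Proposition~\ref{alg map prop}, set $F(\zbar)=\zbar-\Log_A\alpha(\zbar)$, show $\dd F\to\Id$, invert to get $S$, and patch into a sheaf. But the central analytic step has a genuine gap. You derive $\dd F\to\Id$ from the claim that $L=\Log_A\circ\alpha$ is \emph{analytic} at $\cbar$ in the chart $U_\ell$, arranged by deleting from $C$ the image of the ramification locus of $\pi:\Vbar\to\P_n$. This cannot be arranged in general: the branch locus of $\pi$ is typically a divisor that \emph{contains the whole hyperplane at infinity} $H$, so removing it would leave $C$ empty rather than Zariski dense in $H$. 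This is not a pathology --- it already happens in the paper's motivating example of Section~\ref{example section}, where the branches involve $\sqrt{z_2}$ and a cube root of $z_1$, so every point of $H$ is a branch point and $\alpha$ behaves like $z\mapsto z^{1/e}$ at infinity (see Remark~\ref{sheaf remark}). Consequently $L$ is in general only \emph{continuous}, not analytic, at $\cbar$, and your chain-rule computation of $\partial_{z_j}L=O(|z_\ell|^{-1})$ does not apply. The paper's fix is to use only what continuity gives --- boundedness of $G=\Log_A\circ\alpha$ near $\cbar$ (from compactness of $A$) --- and then apply the Cauchy estimate on unit polydiscs centred at points of the affine part $D$ (contained in a slightly rotated sector domain where the relevant branch of $G$ is still defined) to get $\|\dd G(\zbar)\|<\nu$ for any prescribed $\nu$ after shrinking $D^*$ (Proposition~\ref{prop:Jac < epsilon}). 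You should replace your derivative estimate with this argument; the rest of your outline survives.

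A secondary gap: the inverse function theorem only gives that $F$ is a \emph{local} biholomorphism, but to define a single-valued $S$ on a whole sector domain, and in particular to prove that the $S(\lambar)$ are the \emph{only} exponential points in $\Omega$ (item~2) and the exact $d$-to-$1$ count (item~3), you need \emph{global} injectivity of $F$ on $D^*_{(\theta,\theta+2\pi-\delta)}$. This is not automatic from $\dd F\approx\Id$ on a non-convex domain; the paper proves it in Proposition~\ref{prop:injective} by combining the mean value inequality (for pairs of points whose joining segment stays in the sector) with a metric lower bound $|z_\ell'-z_\ell''|\geq\varepsilon^{-1}\cdot 2\sin(\delta/2)>2\mu$ for pairs whose segment leaves it. Your covering-of-$\Dte$-minus-a-strip argument also implicitly uses this injectivity when speaking of $F^{-1}$. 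With these two repairs the proposal aligns with the paper's proof.
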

Further properties of the individual maps $S:\Dte \to \P_n$ making up the sheaf, and their extensions to $\Dte^*$, are given in Proposition~\ref{prop:maps S}.

Theorem \ref{dominant abelian case} can easily  be deduced from Theorem~\ref{thm-main-abelian}.

\begin{proof}[Proof of Theorem \ref{dominant abelian case}]
Let $V\seq \C^n\times A$ be an algebraic subvariety with dominant projection to $\C^n$. If $\dim(V) = n$, we are done by Theorem~\ref{thm-main-abelian}.

In general, let $W$ be a proper subvariety of $V$. We can choose a subvariety $H \subseteq A$ of codimension $\dim(V) - n$, such that $V' \leteq V \cap (\C^n \times H)$ is irreducible and not contained in $W$. (For example, $H$ can be taken to be an intersection of sufficiently generic hyperplanes.) In particular, $V' \cap W$ is not Zariski dense in $V'$. Then $\dim V' = n$, and $V'$ has dominant projection to $\C^n$,  so by Theorem~\ref{thm-main-abelian}, $V'$ contains an exponential point outside of $W$. Therefore, the exponential points are Zariski dense in $V$, as desired.
\end{proof}

The proof of Theorem~\ref{thm-main-abelian} will run through the rest of this section. We give a brief summary of the key steps first.

\paragraph{Proof outline.}

The proof proceeds in five steps.
\begin{enumerate}
    \item We use Proposition~\ref{alg map prop} to find the set $C$ and extract a algebraic maps $\alpha: \Dte^* \to A$ from $V$, where $\Dte^*$ ranges over sector domains around points $\cbar \in C$. For simplicity, in this summary we will work with the restrictions of $\alpha$ to $\Dte$, the affine part of $\Dte^*$. 
    
    \item We show that a bounded holomorphic branch of $\Log_A(\alpha(\zbar))$ can be defined on $\Dte$, which we denote by $G : \Dte \to \C^n$. Then we consider the map $F(\zbar):= \zbar - G(\zbar) : \Dte \to \C^n$. A point $\zbar \in \Dte$ satisfies $\exp_A(\zbar) = \alpha(\zbar)$ if and only if $F(\zbar) \in \Lambda$. 
    
    \item We prove, possibly after shrinking $\Dte$, that $F$ is injective on $\Dte$ and its Jacobian matrix of first partial derivatives is non-singular. This implies that $F$ has a holomorphic inverse $S$.
    
    \item We show that each sector domain $D_{(t,s)}$ is covered by the images $F(\Dte)$ as $(\theta,\eta)$ varies. In particular, the image of $F$ contains all the lattice points in a neighbourhood of $\cbar$, and so the solutions we want are the images under $S$ of the lattice points. We also describe the asymptotic behaviour of $S$.
    
    \item Finally, we explain how the local maps $S: \Dte \to \C^n$ are patched together and complete the proof of the theorem.
\end{enumerate}

We remark that Steps~1-4 are already sufficient to prove Theorem~\ref{dominant abelian case}, namely the existence of the exponential points. Step~5 yields the additional distribution of the exponential points towards infinity as described in Theorem~\ref{thm-main-abelian}.

\paragraph{Step 1: The algebraic maps.}\label{subsec:abelian-proof}

We begin by applying Proposition~\ref{alg map prop} and extracting algebraic maps from $\overline{V}$, the projective closure of $V$ in $\P_n \times A$. Let us fix the following data:
\begin{itemize}
    \item $\cbar$ an arbitrary point of $C$, where $C$ is as in Proposition~\ref{alg map prop};
    \item $1 \leq \ell \leq n$ such that $c_\ell \neq 0$;
    \item $D^*$ a small polydisc at $\cbar$ in the chart $U_\ell$ as defined in \eqref{eq:polydisc}.
\end{itemize}

Then for each $\theta \in \R$ and each $\eta \in (\theta,\theta+2\pi]$ we have the sector domains $\Dte \subs \C^n$ and their extensions $\Dte^*$ to $\P_n$ as given in \eqref{eq:sector domain} and \eqref{eq:sector domain 2}.
By Proposition~\ref{alg map prop}, there are (unique) algebraic maps
\begin{equation}
    \alpha_1, \dots, \alpha_d : \Dte^* \to A
\end{equation}
whose graphs cover the points of $\Vbar$ over $\Dte^*$.

Through the proof, we may shrink the $\varepsilon$ used to define $D^*$ in \eqref{eq:polydisc} to ensure certain properties hold for certain maps. When a statement begins with `For small enough $D^*$', the following statements will implicitly assume that $\varepsilon$ is sufficiently small to make that statement true.

For the sake of readability we will drop the subscript and just write $\alpha : \Dte^* \to A$ for the algebraic map.

\paragraph{Step 2: Mapping the solutions to the lattice.}
The goal of this step is to define a map $F$ which maps the solutions of the equation $\exp_A(\zbar) = \alpha(\zbar)$ to lattice points.

Since $\Dte^*$ is simply connected, we can choose a continuous branch on $\Dte^*$ of the (multivalued) composite $\Log_A \circ \alpha$. We pick one such branch and call it $G$. Then we have: 
\begin{gather}
      G : \Dte^* \to \C^n \text{ continuous on } \Dte^* \text{ and holomorphic on } \Dte,\nonumber \\ \text{ such that } \exp_A(G(\zbar)) = \alpha(\zbar) \text{ for all } \zbar \in \Dte^*.  \label{eq:G-alpha}
\end{gather}

As with $\alpha$, we can patch together the maps $G$ as $(\theta,\eta)$ varies, and this patching is uniquely determined by analytic continuation on $\Dte$ and then by continuity on $\Dte^*$. The union of their graphs yields a multivalued map $\mathbf{G}:D^* \to \C^n$, of which the maps $G$ are single-valued branches; the restrictions of the maps to $D$ and their continuations yield a sheaf of analytic maps as in Remark~\ref{sheaf remark}. 

Let $\mu$ be some fixed positive real number.

\begin{proposition}\label{prop:bounded}
  For small enough $D^*$, the image $G(D^*)$ is bounded in $\C^n$.

  Moreover, for small enough $D^*$, every branch $G$ of $\mathbf{G}$ on every sector domain has image contained in an open ball centred at $G(\cbar)$ of radius at most $\mu$.
\end{proposition}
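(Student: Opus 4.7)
The plan is to exploit two facts in tandem: first, that $\exp_A : \C^n \to A$ is a covering homomorphism of complex Lie groups, so small enough neighbourhoods in $A$ lift via $\exp_A$ to disjoint unions of biholomorphic copies in $\C^n$; and second, that the projection $\pi : \Vbar \to \P_n$ is proper (since $\Vbar$ is projective), converting pointwise continuity of each branch $\alpha_j$ at $\cbar$ into uniform control across every sector domain inside a small enough $D^*$.

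\medskip

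First I would fix, for each of the finitely many points $a_i := \alpha_i(\cbar)$ comprising the fibre of $\pi$ over $\cbar$ as realised in $A$, an open neighbourhood $U_i \ni a_i$ small enough that $\exp_A^{-1}(U_i)$ is a disjoint union of open sets of diameter less than $\mu$, each mapped biholomorphically onto $U_i$. Since $\exp_A$ is a group homomorphism, all these components are $\Lambda$-translates of one another and so automatically have the same diameter. Shrinking further, I arrange the $U_i$ to be pairwise disjoint. Properness of $\pi$ then yields a neighbourhood $N$ of $\cbar$ in $\P_n$ with $\pi^{-1}(N) \subseteq \bigsqcup_i (N \times U_i)$, because the complement of $\bigsqcup_i (\P_n \times U_i)$ in $\Vbar$ is closed and has closed image under $\pi$ avoiding $\cbar$. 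Taking $\varepsilon$ small enough that $D^* \subseteq N$ then forces $\alpha_j(\Dte^*) \subseteq \bigsqcup_i U_i$ for every branch $\alpha_j$ on every sector domain, and connectedness of $\Dte^*$ together with disjointness of the $U_i$ pins $\alpha_j(\Dte^*)$ entirely inside the single $U_i$ containing $\alpha_j(\cbar)$.

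\medskip

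For the chosen branch $G$ of $\Log_A \circ \alpha_j$ on $\Dte^*$, the defining relation $\exp_A \circ G = \alpha_j$ gives $G(\Dte^*) \subseteq \exp_A^{-1}(U_i)$. Continuity of $G$ and connectedness of $\Dte^*$ force this image into a single connected component of the preimage, namely the one containing $G(\cbar)$; by construction that component has diameter less than $\mu$, so $G(\Dte^*) \subseteq B(G(\cbar), \mu)$, which is the second claim. The first claim follows at once, since each branch of $\mathbf{G}$ on $D^*$ is assembled from finitely many sector branches, each with bounded image controlled as above, with a common limiting value $G(\cbar)$ coming from the continuous extension to $D^* \setminus D$.

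\medskip

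The only real obstacle is securing uniformity across all sector domains simultaneously with a single choice of $\varepsilon$; routing through properness of $\pi$ (rather than treating each sector separately by continuity) handles this cleanly and bypasses any need for explicit asymptotic estimates near $\cbar$.
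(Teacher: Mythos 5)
Your argument is correct, but it takes a genuinely different route from the paper's. The paper's proof is a two-line continuity argument: $G$ is already known, from its construction in Step~2 as a continuous branch of $\Log_A\circ\alpha$ on the simply connected set $\Dte^*$, to be continuous at $\cbar$, so $G(\zbar)\to G(\cbar)$ as $\zbar\to\cbar$; since every point of $D^*$ tends to $\cbar$ as $\varepsilon\to 0$, shrinking $D^*$ puts $G(\Dte^*)$ inside the ball of radius $\mu$ about $G(\cbar)$, and the paper then simply asserts that this can be arranged simultaneously for all $d$ branches of $\alpha$ and all sector domains. You instead re-derive the bound from the covering structure of $\exp_A$ together with properness of $\pi$: the tube-lemma step $\pi^{-1}(N)\subseteq\bigsqcup_i(N\times U_i)$ gives, with a single choice of $\varepsilon$, a containment inherited by every branch on every sector domain at once, and connectedness of $\Dte^*$ then traps each $G(\Dte^*)$ in a single sheet of $\exp_A^{-1}(U_i)$ of diameter less than $\mu$. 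This buys a clean, explicit treatment of precisely the uniformity the paper glosses over, at the cost of partially re-proving what Step~2 already supplies (the existence and continuity of $G$ on $\Dte^*$ rests on the same covering-space facts). Two points are worth making explicit. First, that a sheet of $\exp_A^{-1}(U_i)$ has diameter tending to $0$ as $U_i$ shrinks: pull $U_i$ back along a fixed local section of $\exp_A$ defined on a slightly larger evenly covered neighbourhood and use continuity of that section. Second, in your deduction of the first claim, the assertion that the sector branches composing one connected branch of $\mathbf{G}$ share the common value $G(\cbar)$ needs the observation (as in Remark~\ref{sheaf remark}) that successive continuations agree on an overlap whose closure contains $D^*\setminus D$, which forces their continuous extensions to agree there and in particular rules out picking up a period in $\Lambda$ under monodromy around $\cbar$; you gesture at this, and it is correct, but it is the one place where something could silently go wrong.
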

\begin{proof}
  By continuity, $G(\zbar) \to G(\cbar)$ as $\zbar \to \cbar$, so for each $(\theta,\eta)$, by shrinking $D^*$ we may assume that the image $G(\Dte^*)$ falls into an open ball around $G(\cbar)$, with radius at most $\mu$. We can shrink $D^*$ sufficiently so that this bound holds simultaneously for $G$ on all $d$ branches of $\alpha$ and all sector domains $\Dte^*$.
\end{proof}

We now use $G$ to define a new map $F$ which will take the solutions to lattice points.
Define $F : \Dte^* \to \P_n$ by
\be
F(\zbar) \leteq 
\begin{cases}
\zbar - G(\zbar),& \text{ when } \zbar \in \Dte,\\
\zbar, &\text{ when } \zbar \in \Dte^*\setminus \Dte.
\end{cases}
\label{eq for F}
\ee

\begin{proposition}\label{prop F cont infty}
The map $F$ has the following properties.

\begin{itemize}
    \item  $F$ is continuous on $\Dte^*$ and holomorphic on $\Dte$;
    \item $F(\Dte) \subs \C^n$;
    \item A point $\zbar \in \Dte$ is a solution to the equation $\exp_A(\zbar) = \alpha(\zbar)$ if and only if $F(\zbar) \in \Lambda$.
\end{itemize}
\end{proposition}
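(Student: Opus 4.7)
The plan is to dispatch the three bullets of Proposition~\ref{prop F cont infty} separately, doing the serious work only on the continuity claim at points at infinity. For $\zbar \in \Dte$, holomorphy of $F\restrict{\Dte}$ is immediate because $G$ is holomorphic there by \eqref{eq:G-alpha}, so $\zbar \mapsto \zbar - G(\zbar)$ is holomorphic, and continuity on $\Dte$ then follows trivially. The second bullet is also immediate from the formula: for $\zbar \in \Dte \subseteq \C^n$ one has $G(\zbar) \in \C^n$, hence $F(\zbar) = \zbar - G(\zbar) \in \C^n$.

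The real work is continuity at a point $\cbar' \in \Dte^* \setminus \Dte$, which lies at infinity in $\P_n$ and has the form $\cbar' = [0:c'_1:\ldots:c'_n]$ in the chart $U_\ell$ (with $c'_\ell = 1$). Here I would work in $U_\ell$ and write the image $F(\zbar)$ in homogeneous coordinates as $[1 : z_1 - G_1(\zbar) : \ldots : z_n - G_n(\zbar)]$, then rescale by dividing through by $z_\ell - G_\ell(\zbar)$. As $\zbar \to \cbar'$ along $\Dte$ one has $|z_\ell| \to \infty$, while $G(\zbar)$ remains bounded by Proposition~\ref{prop:bounded}. Consequently $1/(z_\ell - G_\ell(\zbar)) \to 0$ and each ratio $(z_i - G_i(\zbar))/(z_\ell - G_\ell(\zbar))$ equals $(z_i/z_\ell - G_i(\zbar)/z_\ell)/(1 - G_\ell(\zbar)/z_\ell) \to c'_i$, using that $z_i/z_\ell \to c'_i$ by the definition \eqref{eq:polydisc} of $D^*$ and that the $G$-contributions vanish in the limit. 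This gives $F(\zbar) \to \cbar' = F(\cbar')$ in $\P_n$, as required.

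The third bullet is a direct algebraic consequence of the construction. Since $G$ satisfies $\exp_A(G(\zbar)) = \alpha(\zbar)$ by \eqref{eq:G-alpha} and $\exp_A$ is a group homomorphism with kernel exactly $\Lambda$, one has $\exp_A(\zbar) = \alpha(\zbar) = \exp_A(G(\zbar))$ iff $\exp_A(\zbar - G(\zbar)) = 0_A$ iff $F(\zbar) = \zbar - G(\zbar) \in \Lambda$.

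The only subtle point in the proof is the limit argument at infinity; the rest is essentially bookkeeping. The boundedness of $G$ provided by Proposition~\ref{prop:bounded} is precisely what makes the term $G(\zbar)$ negligible in projective coordinates as $\zbar$ runs off to infinity, so that $F$ becomes asymptotically the identity on $\P_n$ near $\cbar'$ — a feature that will also be crucial when inverting $F$ in the next step.
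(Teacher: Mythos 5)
Your proposal is correct and follows essentially the same route as the paper: holomorphy on $\Dte$ from that of $G$, the inclusion $F(\Dte)\subseteq\C^n$ read off the formula, the third bullet from $\ker(\exp_A)=\Lambda$ together with \eqref{eq:G-alpha}, and continuity at infinity from the boundedness of $G$ (Proposition~\ref{prop:bounded}) making the correction term negligible in the chart $U_\ell$. Your explicit computation with the ratios $(z_i-G_i(\zbar))/(z_\ell-G_\ell(\zbar))$ just spells out in coordinates what the paper phrases as $F(\zbar)-\zbar\to 0$ in the natural metric of $U_\ell$.
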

\begin{proof}
Since $G$ is holomorphic on $\Dte$, it follows that $F$ is also holomorphic there. 
To prove continuity of $F$ at a point $\abar\in \Dte^*\setminus \Dte$, it suffices to observe that $G$ is bounded in $\C^n$, that is, in the chart $U_0$, and so in the natural metric of the chart $U_\ell$ around $\abar$, we have $F(\zbar) - \zbar \to 0$ as $\zbar\to\abar$.

   The second statement is evident and the third statement follows immediately from \eqref{eq:G-alpha} and \eqref{eq for F}.
\end{proof}

Just as for $\mathbf{G}$, the union of the maps $F$ yield a multivalued function $\mathbf{F} : D^* \to \P_n$, which we may suggestively write as $\mathbf{F}(\zbar) = \zbar - \mathbf{G}(\zbar)$.

\paragraph{Step 3: Local injectivity of \texorpdfstring{$\mathbf{F}$}{F}.}

In this step we show that $F$ is injective when $\Dte^*$ is small enough, both in terms of shrinking the polydisc $D^*$ and of moving $\eta$ closer to $\theta$. Hence, as a multivalued function, $\mathbf{F}$ is locally invertible on $D$. 

First, we recall Cauchy's estimate from the theory of complex functions.
See, for example, \cite[Chapter 1, \S 2.6, Theorem 4]{shabat}. 

\begin{fact}[Cauchy Estimate]\label{lem:bounded-deriv}
    Let $f : \Omega \to \C$ be a holomorphic function on an open domain $\Omega\seq \C^n$ containing a closed polydisc $T$ of radius $r$ centred at a point $\wbar \in \Omega$. Then for any $k$ we have
    \[ \left| \frac{\partial f}{\partial z_k}(\wbar) \right| \leq \frac{\max_{\zbar \in T}|f(\zbar)|}{r}. \]
\end{fact}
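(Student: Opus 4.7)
The statement is the classical Cauchy estimate in several complex variables, so rather than devising a new argument I would appeal to the one-variable case applied slicewise. The plan is to restrict $f$ to a complex line through $\wbar$ parallel to the $z_k$-axis, apply the single-variable Cauchy integral formula on the disc of radius $r$ in that line, and estimate the resulting contour integral.

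More concretely, write $\wbar = (w_1,\dots,w_n)$ and define the single-variable function
\[
g(\zeta) \leteq f(w_1,\dots,w_{k-1},\zeta,w_{k+1},\dots,w_n),
\]
defined on the closed disc $\{\zeta \in \C : |\zeta - w_k| \leq r\}$, which lies in $\Omega$ because the closed polydisc $T$ of radius $r$ around $\wbar$ does. Since $f$ is holomorphic in all $n$ variables, $g$ is holomorphic in $\zeta$, and $\partial f/\partial z_k(\wbar) = g'(w_k)$.

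By the Cauchy integral formula for the first derivative,
\[
g'(w_k) \;=\; \frac{1}{2\pi i}\oint_{|\zeta - w_k| = r} \frac{g(\zeta)}{(\zeta - w_k)^2}\,d\zeta.
\]
Taking absolute values and using the standard $ML$-estimate with $L = 2\pi r$ and $M = \max_{|\zeta - w_k| = r}|g(\zeta)|/r^2$ gives
\[
|g'(w_k)| \;\leq\; \frac{M\cdot L}{2\pi} \;=\; \frac{\max_{|\zeta - w_k| = r}|g(\zeta)|}{r}.
\]
Since the circle $\{|\zeta - w_k| = r\}$ corresponds to a subset of the boundary of $T$, in particular a subset of $T$ itself, we have $\max_{|\zeta - w_k| = r}|g(\zeta)| \leq \max_{\zbar \in T}|f(\zbar)|$, giving the required inequality.

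There is no real obstacle here: the only thing to be careful about is the bookkeeping of the containment of the one-dimensional disc inside the polydisc $T \subseteq \Omega$, which is immediate since $T$ is a product of closed discs of radius $r$ in each coordinate. The reason the paper states this as a \emph{Fact} with a citation to Shabat is precisely that it is a direct slicewise consequence of the one-variable Cauchy estimate, and no further ideas are needed.
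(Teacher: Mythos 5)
Your argument is correct and is precisely the standard slicewise reduction to the one-variable Cauchy integral formula; the paper itself gives no proof, only the citation to Shabat, and the cited result is proved in exactly this way. The only point worth noting is that the slice disc $\{\zeta : |\zeta - w_k| \leq r\}$ sits inside $T \subseteq \Omega$ with $\Omega$ open, so $g$ is holomorphic on a neighbourhood of the closed disc and the contour integral over $|\zeta - w_k| = r$ is legitimate, which you have handled.
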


\begin{proposition}\label{prop:Jac < epsilon}
  For all $\nu > 0$, there is $D^*$ small enough such that for all intervals $(\theta,\eta)$, for all $\zbar \in \Dte$, the norm of the Jacobian matrix of the first partial derivatives $\dd G(\zbar)$ is less than $\nu$.

  In particular, for small enough $D^*$, for all branches $F$, $G$ of respectively $\mathbf{F}$, $\mathbf{G}$ on every sector domain $\Dte^*$, for all $\zbar \in \Dte$ we have:
  \begin{itemize}
    \item $\| \dd G(\zbar) \|< 1/2$,
    \item $\det (\dd F(\zbar)) \neq 0$.
  \end{itemize}
\end{proposition}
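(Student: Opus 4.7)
The approach is to apply Cauchy's estimate (Fact~\ref{lem:bounded-deriv}) to the holomorphic function $G$ on polydiscs contained in a slightly enlarged sector domain, and to use Proposition~\ref{prop:bounded} to make the supremum of $G$ on these polydiscs arbitrarily small.

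Concretely, I would fix a small angular margin $\kappa>0$, choose $\widetilde{\varepsilon}:=2\varepsilon$ for an enlarged polydisc $\widetilde{D}^*$, and by Remark~\ref{sheaf remark} extend the branch $G$ analytically to $\widetilde{D}_{(\theta-\kappa,\eta+\kappa)}^*$. By Proposition~\ref{prop:bounded}, after subtracting the constant $G(\cbar)$ (which does not alter derivatives), we may arrange $|G|\leq\mu$ on this set uniformly across all branches and all choices of $(\theta,\eta)$, with $\mu$ made arbitrarily small once $\widetilde{\varepsilon}$ is small enough.

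The heart of the argument is showing that for every $\zbar\in\Dte$ there is a closed polydisc $T\subseteq\C^n$ of some \emph{fixed} radius $r_0>0$ centred at $\zbar$ with $T\subseteq\widetilde{D}_{(\theta-\kappa,\eta+\kappa)}$, where $r_0$ is independent of $\zbar$, $\varepsilon$ and the sector. Checking the three conditions defining the enlarged domain, both the angular constraint and the constraint $|z_\ell'|>\widetilde{\varepsilon}^{-1}$ leave room of order $\varepsilon^{-1}$, since $|z_\ell|>\varepsilon^{-1}$ on $\Dte$. The tight constraint is $|z_k'/z_\ell'-c_k|<\widetilde{\varepsilon}$; using the identity
\[
\frac{z_k'}{z_\ell'}-\frac{z_k}{z_\ell}=\frac{(z_k'-z_k)z_\ell-z_k(z_\ell'-z_\ell)}{z_\ell z_\ell'}
\]
together with $|z_k|\leq(|c_k|+\varepsilon)|z_\ell|$ and $|z_\ell'|\geq|z_\ell|/2$, one finds $|z_k'/z_\ell'-z_k/z_\ell|\leq 2r(1+|c_k|+\varepsilon)/|z_\ell|$. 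Demanding this be below $\widetilde{\varepsilon}-\varepsilon=\varepsilon$ and using $|z_\ell|>\varepsilon^{-1}$ yields a uniform lower bound $r_0$ depending only on the finitely many $|c_k|$.

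Cauchy's estimate then gives $|\partial G_i/\partial z_k(\zbar)|\leq\mu/r_0$ for every entry of $\dd G$ and every $\zbar\in\Dte$, uniformly in the branch and in $(\theta,\eta)$. Since the operator norm satisfies $\|\dd G\|\leq n\max_{i,k}|\partial G_i/\partial z_k|$, taking $\mu$ small via Proposition~\ref{prop:bounded} delivers $\|\dd G(\zbar)\|<\nu$ for any prescribed $\nu>0$. Applying this with $\nu=1/(2n)$ gives $\|\dd G\|<1/2$; then $\dd F=I-\dd G$ is invertible by the convergent Neumann series, so $\det(\dd F(\zbar))\neq 0$. The main obstacle is the bookkeeping that produces the uniform constant $r_0$; once that is in place the Cauchy-plus-Neumann argument is routine.
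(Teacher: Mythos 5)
Your proposal is correct and follows essentially the same route as the paper: bound $G-G(\cbar)$ uniformly on a slightly enlarged domain via Proposition~\ref{prop:bounded}, apply the Cauchy estimate on a polydisc of uniform radius around each $\zbar\in\Dte$ (the paper normalises this radius to $1$ by shrinking $\varepsilon$, rather than computing an explicit $r_0$), and conclude via $\dd F = I - \dd G$. One small wrinkle: when $\eta-\theta=2\pi$ a single-valued extension of $G$ to the widened sector $\widetilde{D}_{(\theta-\kappa,\eta+\kappa)}$ need not exist because of monodromy, but this is harmless since the estimate is pointwise and one can instead, as the paper does, take for each $\zbar$ the branch on the width-$2\pi$ sector $D_{(\arg(z_\ell)-\pi,\arg(z_\ell)+\pi)}$ containing the polydisc.
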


To be more precise, here we use the $\ell^{\infty}$-norm on $\C^{n^2}$ as the matrix norm, and denote it by $\| \cdot \|$.

\begin{proof}
  By continuity, we may shrink $D^*$ so that for all $\zbar \in D$ and for all branches $G$ of $\mathbf{G}$ at $\zbar$, we have $|G(\zbar) - G(\cbar)| < \nu$. Here $\mid \cdot \mid$ denotes the $\ell^2$-norm on $\C^{n}$. 

  Furthermore, we may shrink $D^*$ further and assume that for every $\zbar \in D$, every polydisc $T$ of radius 1 around $\zbar$, and every $\zbar' \in T$, we have $|G(\zbar') - G(\cbar)| < \nu$.
  Now choose $\zbar \in D$ and a branch of $\mathbf{G}$ defined on $D_{(\arg(z_\ell)-\pi,\arg(z_\ell)+\pi)}$.
  Then the polydisc $T$ of radius $1$ around $\zbar$ is entirely contained in $D_{(\arg(z_\ell)-\pi,\arg(z_\ell)+\pi)}$.
  
  Then we apply the Cauchy Estimate (Fact \ref{lem:bounded-deriv}) to the coordinate functions of $G(\zbar) - G(\cbar)$ and deduce that their partial derivatives at $\zbar$, which are equal to those of $G(\zbar)$, are bounded by $\nu$. 

  Since $\dd F(\zbar) = I - \dd G(\zbar)$, when $\| \dd G(\zbar) \|$ is sufficiently small, $\dd F(\zbar)$ is close to the identity matrix, hence it is non-singular.
\end{proof}

 We can summarise the above statements with $\|\dd \mathbf{G}(\zbar)\| < \frac{1}{2}$, $\det(\mathbf{F}(\zbar)) \neq 0$ for all $\zbar \in D$, where the inequalities implicitly apply to all values of $\mathbf{G}$ and $\mathbf{F}$.

We now show that $F$ is injective when its domain $\Dte^*$ is sufficiently small, in the sense that $\eta$ is sufficiently close to $\theta$. Recall that by Proposition~\ref{prop:bounded} the set $G(\Dte^*)$ is bounded. Indeed, given any $\xbar, \ybar \in \Dte^*$, and any branch $G$ of $\mathbf{G}$ with that domain, we have $|G(\xbar) - G(\ybar)| < 2\mu$.

\begin{prop}\label{prop:injective}
For small enough $D^*$, there is a small $\delta>0$ such that for all $\theta \in \R$, the map $F$ is injective on $D^*_{(\theta,\theta+2\pi-\delta)}$.
\end{prop}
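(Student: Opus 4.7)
I would first reduce to proving injectivity of $F$ on $\Dte$. Indeed, by Proposition~\ref{prop F cont infty}, $F$ is the identity on $\Dte^* \setminus \Dte$, whose image lies in the hyperplane at infinity, whereas $F(\Dte) \subseteq \C^n$; thus the two parts cannot mix under $F$, and injectivity on $\Dte^* \setminus \Dte$ is automatic.

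Suppose for contradiction that $\xbar \neq \ybar$ lie in $\Dte$ with $F(\xbar) = F(\ybar)$, so that $\xbar - \ybar = G(\xbar) - G(\ybar)$. By Proposition~\ref{prop:bounded}, after shrinking $D^*$ I may assume $|\xbar - \ybar| < 2\mu$ for any prescribed $\mu > 0$. Writing $x_\ell = r_1 e^{i\phi_1}$ and $y_\ell = r_2 e^{i\phi_2}$, let $\omega = |\phi_1 - \phi_2| \in [0, 2\pi - \delta)$ be the angular separation within the sector. If $\omega \geq \pi/2$, then $\cos\omega \leq 0$, and the law of cosines gives $|x_\ell - y_\ell|^2 \geq r_1^2 + r_2^2 \geq 2\varepsilon^{-2}$, hence $|\xbar - \ybar| \geq \sqrt{2}\,\varepsilon^{-1}$; this contradicts $|\xbar - \ybar| < 2\mu$ once $\mu \varepsilon < 1/\sqrt{2}$, which is arranged by further shrinking $D^*$.

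In the main case $\omega < \pi/2$, I would construct a path $\gamma \colon [0,1] \to \Dte$ from $\xbar$ to $\ybar$ by linearly interpolating the modulus and argument of $z_\ell$, together with linearly interpolating the ratios $z_i/z_\ell - c_i$ for $i \neq \ell$. This path lies in $\Dte$ by convexity of the open disc $|v - c_i| < \varepsilon$, of the modulus interval (which keeps $|\gamma_\ell(t)| > \varepsilon^{-1}$), and of the argument interval (which stays inside the sector). Using that $|r_1 - r_2| < 2\mu$ keeps the radii $r_1, r_2$ comparable, a chain-rule computation bounds $\mathrm{length}(\gamma)$ in $\C^n$ by a constant $C$ depending only on $|c_i|, \varepsilon, n$ times $|\xbar - \ybar|$. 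Applying the mean value inequality to $G$ along $\gamma$, combined with the bound $\|\dd G\| < \nu$ from Proposition~\ref{prop:Jac < epsilon} and the choice $\nu < 1/C$, then gives $|G(\xbar) - G(\ybar)| < |\xbar - \ybar|$, contradicting $\xbar - \ybar = G(\xbar) - G(\ybar)$.

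The key technical point I expect to grapple with is the uniformity of the length estimate: the chain-rule contributions from the coordinates $z_i$ ($i \neq \ell$) include a factor $|\gamma_\ell(t)| \cdot |y_i/y_\ell - x_i/x_\ell|$ whose first factor is a priori unbounded. The identity $y_i x_\ell - x_i y_\ell = x_\ell(y_i - x_i) - x_i(y_\ell - x_\ell)$, followed by dividing by $|x_\ell y_\ell|$, absorbs the unbounded factor and leaves a bound purely in terms of $|\xbar - \ybar|$ (using that $r_1, r_2$ are comparable), which is exactly what is needed for the length estimate to close the argument.
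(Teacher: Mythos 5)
There is a genuine gap, and it sits exactly where the parameter $\delta$ has to do its work. Your case split is by the angular separation $\omega=|\phi_1-\phi_2|\in[0,2\pi-\delta)$ measured \emph{inside the sector}, and you claim that $\omega\geq\pi/2$ forces $\cos\omega\leq 0$. That is false for $\omega\in(3\pi/2,\,2\pi-\delta)$, where $\cos\omega>0$; this is precisely the situation of two points lying near opposite edges of the branch cut, whose $z_\ell$-coordinates point in almost the same direction and can therefore be geometrically close even though their in-sector arguments differ by almost $2\pi$. Neither of your two arguments covers this range: the law-of-cosines bound fails, and your interpolating path (which would have to sweep the argument through almost a full turn, giving length of order $\varepsilon^{-1}\omega$ rather than $O(|\xbar-\ybar|)$) is only run for $\omega<\pi/2$. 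Tellingly, your proof never uses $\delta$ at all, whereas the statement asserts the existence of a suitable $\delta$. The correct treatment of this case is the one the paper gives: for such $\xbar,\ybar$ the true geometric angle between $x_\ell$ and $y_\ell$ exceeds $\delta$, so $|x_\ell-y_\ell|\geq 2\varepsilon^{-1}\sin(\delta/2)$, and choosing $\delta$ (and if necessary $\varepsilon$) so that $\varepsilon^{-1}\sin(\delta/2)\geq\mu$ contradicts $|\xbar-\ybar|<2\mu$. You need to insert this third case and thereby pin down the constraint on $\delta$.

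The remainder of your argument is sound and is a legitimate variant of the paper's. The paper uses the straight segment $[\xbar,\ybar]$ and must separately handle the possibility that it leaves $D$ through the inner boundary $|z_\ell|=\varepsilon^{-1}$ (fixed by a further shrinking of $\varepsilon$); your curved path interpolating modulus, argument, and the ratios $z_i/z_\ell-c_i$ stays in $\Dte$ by convexity, at the cost of the length estimate you describe, whose key cancellation (absorbing the factor $|\gamma_\ell(t)|$ into $|y_ix_\ell-x_iy_\ell|/|x_\ell y_\ell|$) is correct. Either route closes the small-angle case; only the near-$2\pi$ case is missing.
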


\begin{proof}
Write $\eta$ for $\theta+2\pi-\delta$, with $\delta$ to be determined later. Since $F$ is the identity on $\Dte^*\setminus \Dte$, it is injective there. Also, $F$ maps $\Dte$ to $\C^n$ which is disjoint from $\Dte^*\setminus \Dte$, so it suffices to show that $F$ is injective on $\Dte$. Suppose $\xbar, \ybar \in \Dte$ are such that $F(\xbar) = F(\ybar)$. Recall that $G(\zbar) = \zbar-F(\zbar)$. So we have
\[ |\xbar - \ybar| = |G(\xbar) - G(\ybar)| < 2\mu.\]
If the line segment $[\xbar,\ybar]$ is entirely contained in $\Dte$ then, by the mean value inequality, 
 \[ |\xbar - \ybar| = |G(\xbar) - G(\ybar)| \leq  \max_{\zbar\in [\xbar,\ybar]}\|\dd G(\zbar)\| \cdot |\xbar-\ybar|. \]
Then by Proposition~\ref{prop:Jac < epsilon} we have $\max_{\zbar\in [\xbar,\ybar]}\|\dd G(\zbar)\|<1/2$, so $\xbar = \ybar$.

Now assume that $[\xbar,\ybar] \nsubseteq \Dte$. First, suppose that the segment crosses the region $D \setminus \Dte$. In particular, it will contain two points $\zbar'$, $\zbar''$ with respectively $\arg(z_\ell') = \theta$ and $\arg(z_\ell'') = \eta$, and we must have $\delta < \pi$. Therefore, $|\zbar' - \zbar''| \geq |z_\ell' - z_\ell''| \geq \varepsilon^{-1} \cdot 2\sin(\frac{\delta}{2})$. See the first image in Figure~\ref{fig:sector domains}.

By choosing $\delta$ large enough, and possibly shrinking $\varepsilon$, we get $|\zbar' - \zbar''| \geq |z_\ell' - z_\ell''| > 2\mu$, a contradiction.

If the above does not happen, we observe that $[\xbar,\ybar] \subseteq \Dte$ as soon as $\xbar$, $\ybar$ lie in a polydisc at $\cbar$ of slightly smaller radius; to be precise, as soon as $|\xbar|,|\ybar| \geq \sqrt{\varepsilon^{-2} + \mu^2}$. It then suffices to shrink $D^*$ a little further to reach the desired conclusion. See the second image in Figure~\ref{fig:sector domains}, where $\varepsilon_{\rm{old}}$ represents the starting value of $\varepsilon$, and $\varepsilon_{\rm{new}}$ the new one.
\end{proof}

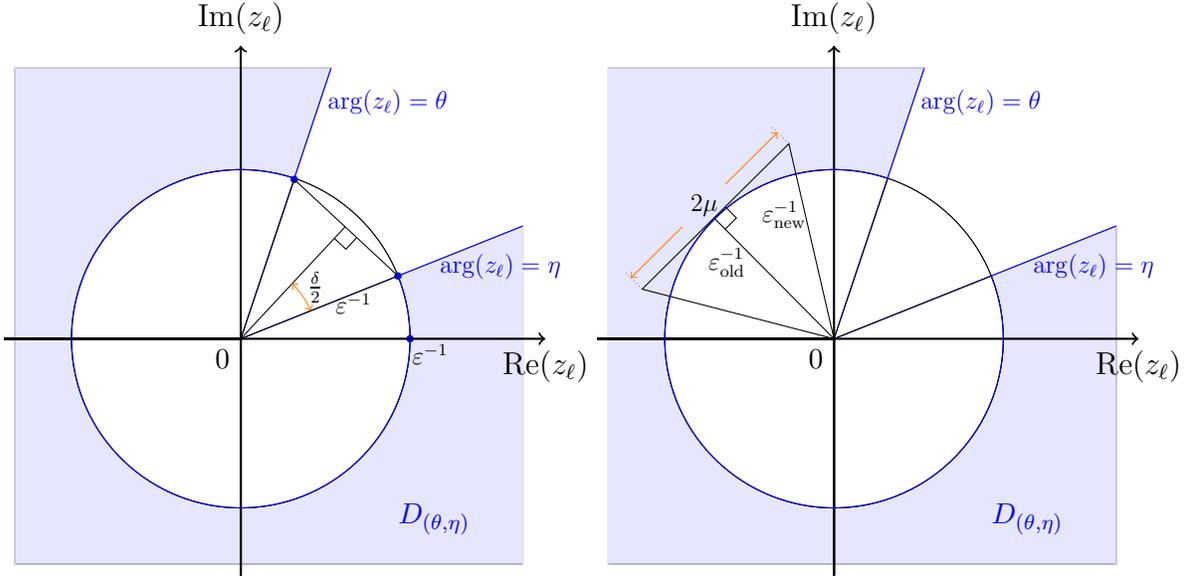
\begin{figure}[h]
  \centering
  \begin{minipage}{0.5\textwidth}
  \centering
      \begin{tikzpicture}[scale=1.5]
      
      \draw[fill=blue!10]  (0,0) -- (0.8,2.4)--(-2,2.4) -- (-2,-2) -- (2.5,-2) -- (2.5,1) -- cycle;
      \draw[blue!10]  (0,0) -- (0.8,2.4)--(-2,2.4) -- (-2,-2) -- (2.5,-2) -- (2.5,1) -- cycle;
          
      \draw (0,0) circle (1.5cm);

      \draw (0,0) [fill = white] circle (1.5cm);
      
      \draw[blue]  (0,0) -- (0.8,2.4); 
      \draw[blue]  (0,0) -- (2.5,1);
      
      \draw[blue] (1.3,-1.6) circle (0pt) node [right] {\small $\Dte$};
      \draw[blue] (0.6,2.1) circle (0pt) node [right] {\scalebox{0.8}{ $\arg(z_\ell) = \theta$}};
      \draw[blue] (2.3,0.85) circle (0pt) node [below] {\scalebox{0.8}{$\arg(z_\ell) = \eta$}};
      
      \draw (0,0) circle (0pt) node [below left] {\small $0$};
       \draw (1.35,0.05) circle (0pt) node [below right] {\scalebox{0.8}{ $\varepsilon^{-1}$}};
       \filldraw[blue] (1.5, 0) circle (0.8pt);
       
      \coordinate (A) at (0,0);
      \coordinate (B) at (0.474, 1.413);
      \coordinate (C) at (1.393, 0.557);
      \coordinate (P) at ($(B)!(A)!(C)$);
      
      \draw[blue, fill] (B) circle (0.8pt);
      
      \draw[blue, fill] (C) circle (0.8pt);
      
      \draw (A) -- (P);
      
      \draw (A) -- (B) -- (C) -- cycle;
      
     \dotMarkRightAngle[size=4pt](A,P,C);
     
     \pic["\scalebox{0.8}{$\frac {\delta}{2}$}", draw=orange, <->, angle eccentricity=1.2, angle radius=1cm]    {angle=C--A--P};

         \draw (1, 0.5) circle (0pt) node [below] {\scalebox{0.8}{$\varepsilon^{-1}$}};
          
       \draw [<->, thick] (0,2.6) node (yaxis) [above] {$\im (z_\ell)$} -- (0,-2.1) 
          |- (-2.1,0) -- (2.7,0) node (xaxis) [below] {$\re (z_\ell)$};
          
      \draw [blue,domain=71:382, samples =100] plot ({1.5*cos(\x)}, {1.5*sin(\x)});
  \end{tikzpicture}
  \end{minipage}
  \begin{minipage}{0.49\textwidth}
  \centering
      \begin{tikzpicture}[scale=1.5]
      
      \draw[fill=blue!10]  (0,0) -- (0.8,2.4)--(-2,2.4) -- (-2,-2) -- (2.5,-2) -- (2.5,1) -- cycle;
      \draw[blue!10]  (0,0) -- (0.8,2.4)--(-2,2.4) -- (-2,-2) -- (2.5,-2) -- (2.5,1) -- cycle;
      
      \coordinate (A') at (-1.8, 0.54);
      \coordinate (B') at (-0.5, 1.83);
      \draw[orange, <->] (A') -- (B') node [midway, fill = blue!10, text = black] {\scalebox{0.8}{$2\mu$}};
          
      \draw (0,0) circle (1.5cm);

      \draw (0,0) [fill = white] circle (1.5cm);
      
      \draw[blue]  (0,0) -- (0.8,2.4); 
      \draw[blue]  (0,0) -- (2.5,1);
      
      \draw[blue] (1.3,-1.6) circle (0pt) node [right] {\small $\Dte$};
      \draw[blue] (0.6,2.1) circle (0pt) node [right] {\scalebox{0.8}{ $\arg(z_\ell) = \theta$}};
      \draw[blue] (2.3,0.85) circle (0pt) node [below] {\scalebox{0.8}{$\arg(z_\ell) = \eta$}};
      
      \draw (0,0) circle (0pt) node [below left] {\small $0$};
      
      \coordinate (A) at (-1.7, 0.44);
      \coordinate (B) at (-0.4, 1.73);
      \coordinate (O) at (0,0);
      \coordinate (P) at ($(A)!(O)!(B)$);    
      \coordinate (M) at (0.474, 1.413);
      \coordinate (N) at (1.393, 0.557);
      \coordinate (A'') at (-1.85, 0.59);
      \coordinate (B'') at (-0.55, 1.88);
  
      \draw  (O) -- (A) -- (B) -- (O);
      \draw (O) -- (P);
      
      \draw (O) -- (M);
      \draw (O) -- (N);
      
      \draw[gray, densely dotted] (A) -- (A'');
      \draw[gray, densely dotted] (B) -- (B'');

      \draw (-0.95, 0.9) circle (0pt) node [below] {\scalebox{0.8}{$\varepsilon_{\mathrm{old}}^{-1}$}};
      
      \draw (-0.15, 1.1) circle (0pt) node [left] {\scalebox{0.8}{$\varepsilon_{\mathrm{new}}^{-1}$}};
  
     \dotMarkRightAngle[size=4pt](O,P,B);

       \draw [<->, thick] (0,2.6) node (yaxis) [above] {$\im (z_\ell)$} -- (0,-2.1) 
          |- (-2.1,0) -- (2.7,0) node (xaxis) [below] {$\re (z_\ell)$};
          
       \draw [blue,domain=71:382, samples =100] plot ({1.5*cos(\x)}, {1.5*sin(\x)});
          
  \end{tikzpicture}
  \end{minipage}
  \caption{Some distances between points in the sector domain $\Dte$.}
  \label{fig:sector domains}
\end{figure}

\paragraph{Step 4: Mapping the lattice to the solutions.}
In this step we describe the inverse of $F$, which maps lattice points to solutions of the equation $\exp_A(\zbar) = \alpha(\zbar)$. 

We fix a sector domain $\Dte^*$ with $\eta = \theta+2\pi-\delta$ as given by the previous step, so that $F$ is injective on that domain, and hence has an inverse. 
Now we show that the image of $F$ differs from its domain by at most a strip of bounded width around the boundary. 
Let $B\seq \C^n$ be a closed ball centred at $\mathbf{0}$ containing $G(\Dte^*)$. Then by the definition of $F$, for all $\zbar \in \Dte$ we have $F(\zbar) - \zbar \in B$.

\begin{prop}\label{prop:image-F abelian}
The image $E \leteq F(\Dte)$ is open and contains
\[E' \leteq \{ \zbar\in \Dte: \zbar+ {B} \subseteq \Dte\}.\]
\end{prop}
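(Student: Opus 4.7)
The plan is to establish openness of $E$ from standard complex-analytic open-mapping considerations, and to establish the inclusion $E' \subseteq E$ via a Banach fixed-point argument on translates of the closed ball $B$. This uses exactly the two technical inputs already available: the nonsingularity of $\dd F$ (Proposition~\ref{prop:Jac < epsilon}) and the small-Jacobian bound $\|\dd G\| < 1/2$, together with the boundedness $G(\Dte^*) \subseteq B$ used to define $B$ in the first place.

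\paragraph{Openness.}
First I would argue that $E = F(\Dte)$ is open. Since $F$ is holomorphic on $\Dte$ (Proposition~\ref{prop F cont infty}) and $\det(\dd F(\wbar)) \neq 0$ for every $\wbar \in \Dte$ (Proposition~\ref{prop:Jac < epsilon}), the inverse function theorem for holomorphic maps of several complex variables shows that $F$ is a local biholomorphism at each point of $\Dte$, hence an open map. In particular $E$ is open in $\C^n$.

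\paragraph{The inclusion $E' \subseteq E$.}
Fix $\zbar \in E'$, so that the closed set $\zbar + B$ is contained in $\Dte$. The task is to produce $\wbar \in \Dte$ with $F(\wbar) = \zbar$, equivalently $\wbar = \zbar + G(\wbar)$. Define the map
\[
H : \zbar + B \to \C^n, \qquad H(\wbar) \leteq \zbar + G(\wbar).
\]
Since $G(\wbar) \in B$ for every $\wbar \in \Dte^* \supseteq \zbar + B$, the image of $H$ lies in $\zbar + B$, so $H$ is a self-map of the complete metric space $\zbar + B$. The set $\zbar + B$ is convex (as $B$ is a ball), so for any $\wbar_1, \wbar_2 \in \zbar + B$ the segment $[\wbar_1, \wbar_2]$ lies in $\zbar + B \subseteq \Dte$, and the complex mean-value inequality together with the bound $\|\dd G\| < 1/2$ from Proposition~\ref{prop:Jac < epsilon} yields
\[
|H(\wbar_1) - H(\wbar_2)| = |G(\wbar_1) - G(\wbar_2)| \leq \tfrac{1}{2}\,|\wbar_1 - \wbar_2|.
\]
So $H$ is a contraction. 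The Banach fixed-point theorem produces a (unique) $\wbar \in \zbar + B$ with $H(\wbar) = \wbar$; since $\zbar + B \subseteq \Dte$, this $\wbar$ lies in $\Dte$, and by construction $F(\wbar) = \zbar$. Thus $\zbar \in F(\Dte) = E$, completing the proof.

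\paragraph{Expected obstacle.}
The only subtle point is justifying that the mean-value inequality really applies on the closed ball $\zbar + B$: this needs both convexity (so segments stay in the domain of $G$) and the bound on $\|\dd G\|$ holding throughout $\Dte$, not merely at a single point. Both are already in place, so the argument should go through without incident; no deeper input (e.g.\ Rouch\'e or Kantorovich) is needed here.
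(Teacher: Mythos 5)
Your proof is correct, and the openness part coincides with the paper's (inverse function theorem from $\det(\dd F)\neq 0$). For the inclusion $E'\subseteq E$, however, you take a genuinely different route. The paper argues topologically: $E$ is open and connected, so if $E'\not\subseteq E$ there is a point of $\partial E$ inside $E'$; the containment $F^{-1}(N)\subseteq N+B$ then makes the relevant preimages bounded with closure inside $\Dte$, and a limit-point/continuity argument shows the boundary point is in fact attained, a contradiction. You instead solve $F(\wbar)=\zbar$ directly by rewriting it as the fixed-point equation $\wbar=\zbar+G(\wbar)$ and applying the Banach fixed-point theorem to the self-map $H$ of the closed convex set $\zbar+B$, with contraction constant $1/2$ coming from Proposition~\ref{prop:Jac < epsilon} via the same mean-value inequality the paper uses in Proposition~\ref{prop:injective}. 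All the inputs you invoke ($G(\Dte^*)\subseteq B$, convexity of $\zbar+B$, $\|\dd G\|<1/2$ on all of $\Dte$) are indeed available, so the argument goes through. What your version buys: it needs neither the injectivity of $F$ (which the paper's proof tacitly uses when writing $F^{-1}(\xbar_k)$) nor the connectedness considerations behind ``$E'\setminus E\neq\emptyset$ implies $\partial E\cap E'\neq\emptyset$'', and it yields uniqueness of the preimage within $\zbar+B$ for free. What it costs: it is quantitative rather than purely topological, and in spirit it edges back towards the iterative (Picard-type) existence arguments that the paper deliberately avoids in favour of geometric/topological ones — though your contraction is much simpler than the Newton--Kantorovich machinery of Brownawell--Masser, so this is a stylistic rather than a mathematical objection.
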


\begin{wrapfigure}{r}{8cm}
    \centering
    \begin{tikzpicture}[scale=1.5]

    \draw[fill=blue!10]  (0,0) -- (1.5,3)--(3.5,1) -- cycle;
    
    \draw[blue!10]  (1.5,3)--(3.5,1);
        
    \draw[fill=green!10, opacity=0.8]  (0.8,0.64) -- (1.9,2.84)--(3.36,1.38) -- cycle;

    \draw[green!10]  (1.9,2.84)--(3.36,1.38);
        
    \draw[blue] (0,0) -- (1.5,3);
    \draw[blue] (0,0) -- (3.5,1);
    \draw[green] (0.8,0.64) -- (1.9,2.84);
    \draw[green] (0.8,0.64) -- (3.36,1.38);
    \draw[green] (2.6,1.5) circle (0pt) node [right] {\small $E'$};

    \draw (1.25,1.54)[fill = gray, opacity = 0.2] circle (0.425cm);
    \draw (1.25,1.54) circle (0.425cm);
    \draw (1.56,1.83) circle (0pt) node [right] {\small $z{+}B$};
    
     \filldraw[red] (1.25,1.54) circle (1pt) node[black, anchor=west] {\small $z$};

     \filldraw[gray!80, domain=0:90, samples = 100] plot ({0.8*cos(\x)}, {0.8*sin(\x)});
     
     \draw [blue,domain=16:63, samples = 100] plot ({0.8*cos(\x)}, {0.8*sin(\x)});
     
     \filldraw [blue!10,domain=16:63, samples = 100] plot ({1.225*cos(\x)}, {1.225*sin(\x)});
     
      \filldraw[blue!10] (0.8,0.64) -- (1, 0.64) -- (0.8, 0.8) -- cycle;
     
     \filldraw[gray!50, opacity = 0.5, domain=0:90, samples = 100] plot ({1.225*cos(\x)}, {1.225*sin(\x)});
     
     \draw[green, domain=35:43, samples = 100] plot ({1.225*cos(\x)}, {1.225*sin(\x)});

     \filldraw[gray!80] (0,0) -- (0.8,0) -- (0, 0.8) -- cycle;

      \filldraw[gray!50, opacity = 0.5] (0,0) -- (1.22,0) -- (0, 1.22) -- cycle;
      
      \draw[purple,rounded corners=14pt] (1.5,3.2) -- (1.2,2) -- (0.8,1.3) -- (0.5,0.9);
      \draw[purple,rounded corners=12pt] (0.5,0.9) -- (0.75,0.8) -- (1,0.4);
      \draw[purple,rounded corners=14pt] (1,0.4) -- (1.7,0.6) -- (2.4,0.7) -- (3.3,1.1) -- (3.5,1.15);
      \draw[purple] (2.2,0.62) node [above] {\small $\partial E$};

      \draw[blue] (2.1, 0.2) circle (0pt) node [left] {\small $\Dte$};
    
    \draw[blue] (2.6,0.7) circle (0pt) node [right] {\scalebox{0.8}{ $\arg(z_\ell) = \theta$}};
    
    \draw[blue] (1.4,2.8) circle (0pt) node [left] {\scalebox{0.8}{ $\arg(z_\ell) = \eta$}};
     
    \draw [<->,thick] (0,3.2) node (yaxis) [above] {$\im (z_\ell)$}
        |- (4,0) node (xaxis) [below] {$\re (z_\ell)$};

    \end{tikzpicture}
\end{wrapfigure}

\noindent \textit{Proof.} 
Since $\dd F(\zbar)$ is non-singular on $\Dte$, by the inverse function theorem, $F$ is a local homeomorphism, hence an open map. So the image $E$ is open and connected in $\C^n$. (See figure on the right for a pictorial representation of how $\Dte$, $E'$ and $\partial E$ might look.)

Clearly, $E\cap E'$ is a non-empty open subset of $E'$. Hence if $E'\setminus E \neq \emptyset$ then $\partial E \cap E' \neq \emptyset$ where $\partial$ denotes the boundary. Take a point $\xbar \in \partial E \cap E'$ and a small closed neighbourhood $\xbar \in N\seq E'$. 
 Then we have \[  \overline{F^{-1}(N)} \seq \overline{N+B} = N+B  \seq \Dte.\] 
Now pick a sequence $\xbar_k\in N\cap E$ with $\xbar_k \to \xbar$ as $k\to \infty$. Since $F^{-1}(N)$ is a non-empty bounded subset of $\Dte$, if we set $\zbar_k := F^{-1}(\xbar_k)$, then a subsequence of $\zbar_k$ has a limit point $\zbar \in \overline{F^{-1}(N)}\seq \Dte$. Then by continuity of $F$ we conclude that $\xbar = F(\zbar)\in E$, which is a contradiction. 
\hfill\qedsymbol

\begin{proposition}\label{prop:maps S}
For small enough $D^*$, there is a small $\delta'>0$ such that for all $\theta \in \R$ and $\eta = \theta+2\pi - \delta'$, there is a map $S: \Dte^* \to \P_n$  with the following properties:
\begin{enumerate}
 \item For all $\zbar \in \Dte^*$, we have  $F(S(\zbar)) = \zbar$, where we take the branch $F$ of $\mathbf{F}$ defined on $\Dte^*$ or its analytic continuation to a slightly larger domain containing the image of $S$.
 \item In particular, for $\zbar \in D^*\setminus D$ we have $S(\zbar) = \zbar$ and for $\zbar \in \Dte$ we have $S(\zbar) -\zbar \in B$, the bounded ball defined above.
 \item  For $\zbar \in \Dte$ we have $\exp_A(S(\zbar)) = \alpha(S(\zbar))$ if and only if $\zbar \in \Lambda$.
 \item  $S$ is continuous on $\Dte^*$ and holomorphic on $\Dte$;
 \item The restriction of $S$ to the finite part of the domain $\Dte$ is asymptotically a translation. More precisely, 
    \[ S(\zbar) - \zbar \to G(\cbar) \text{ as } \zbar \to \cbar \text{ with } \zbar \in \Dte.\]
 \end{enumerate}
\end{proposition}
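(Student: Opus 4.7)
The plan is to define $S$ as a one-sided inverse of $F$. Proposition~\ref{prop:injective} secures injectivity of $F$ on sectors of angular width $2\pi - \delta$, while Proposition~\ref{prop:image-F abelian} only guarantees that $F(\Dte)$ covers $\Dte$ minus a bounded strip along the boundary. The natural remedy is to use two nested sectors: given the $\delta$ from Proposition~\ref{prop:injective}, pick $\delta' > \delta$, set $\sigma = (\delta'-\delta)/2$, and for the given $\theta$ with $\eta = \theta+2\pi-\delta'$ consider the slightly wider sector $D^*_{(\theta-\sigma,\eta+\sigma)}$, on which $F$ is still injective. Define $S$ on $\Dte^*$ as the restriction of the inverse of $F$ on this wider sector.

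The key step is showing $\Dte^* \subseteq F(D^*_{(\theta-\sigma,\eta+\sigma)})$. Points of $D^* \setminus D$ are in the image trivially, since $F$ fixes them. For $\zbar \in \Dte$, Proposition~\ref{prop:image-F abelian} applied to the wider sector reduces the problem to the geometric inclusion $\Dte + B \subseteq D_{(\theta-\sigma,\eta+\sigma)}$, where $B$ is a bounded ball containing $G$. After shrinking $D^*$ if necessary, the angular gap $\sigma$ dominates the deflection $|\arg(z_\ell + w_\ell) - \arg(z_\ell)| \leq \arcsin(M\varepsilon)$, and the radial and transversal constraints of the wider sector are preserved under translation by $B$; this is a routine but careful check balancing $\varepsilon$, $\sigma$, and the diameter of $B$.

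With $S$ so defined, properties (1)--(3) are immediate from the construction and Proposition~\ref{prop F cont infty}: (1) is the defining relation of the inverse; (2) unwinds $F = \mathrm{id} - G$ on the affine part and $F = \mathrm{id}$ on $D^* \setminus D$, noting that any $\wbar \in S(\Dte)$ must lie in the finite part (else $F(\wbar) = \wbar$ would land in $D^* \setminus D$, contradicting $F(\wbar) \in \Dte$); (3) restates (1) through Proposition~\ref{prop F cont infty}. For (4), holomorphicity on $\Dte$ is the inverse function theorem using the non-singular Jacobian of Proposition~\ref{prop:Jac < epsilon}; continuity at a point $\cbar' \in D^* \setminus D$ rests on $S(\zbar) - \zbar \in B$, a $\C^n$-bounded difference which becomes negligible in the $U_\ell$-coordinates as $|z_\ell| \to \infty$, forcing $S(\zbar) \to \cbar'$ in $\P_n$. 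Property (5) follows from (2) and continuity: $S(\zbar) - \zbar = G(S(\zbar)) \to G(\cbar)$ as $\zbar \to \cbar$.

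The main obstacle is the geometric surjectivity check in the second paragraph: one must simultaneously control the three constraints defining a sector domain (radial modulus, angular range, and transversal polydisc) against a bounded perturbation in $\C^n$, trading off against how small $D^*$ is made. Once this balancing is done, the rest of the argument is essentially formal and reduces to quoting the earlier propositions.
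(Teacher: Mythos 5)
Your overall strategy is the paper's: invert $F$ on a slightly larger sector on which it is still injective, and check via Proposition~\ref{prop:image-F abelian} that the $F$-image of that larger sector covers the target $\Dte^*$; points (1)--(5) are then read off exactly as in the paper (in particular your derivations of (2), (4) and (5) match). However, there is a concrete flaw in the surjectivity step. You enlarge the source sector only in the \emph{angular} direction, and the inclusion $\Dte + B \subseteq D_{(\theta-\sigma,\eta+\sigma)}$ you need is false near the other two parts of the boundary, no matter how $\varepsilon$, $\sigma$ and the radius $M$ of $B$ are balanced: the wider sector has exactly the same inner radius $|z_\ell|>\varepsilon^{-1}$ and the same transversal tube $|z_i/z_\ell - c_i|<\varepsilon$ as $\Dte$. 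Indeed, take $\zbar\in\Dte$ with $|z_\ell|=\varepsilon^{-1}+t$ for some $t<M$, and $\wbar\in B$ with $w_\ell = -M z_\ell/|z_\ell|$ and all other coordinates zero; then $|z_\ell+w_\ell|<\varepsilon^{-1}$, so $\zbar+\wbar$ lies outside $D$ and hence outside $D_{(\theta-\sigma,\eta+\sigma)}$. (The radius of $B$ is not a free parameter here: $B$ must contain $G(\Dte^*)$, so it cannot be shrunk below roughly $|G(\cbar)|$.) Consequently Proposition~\ref{prop:image-F abelian} applied to the wider sector only yields $\Dte$ minus a bounded strip along the radial and transversal boundary, and $S$ is left undefined on that strip, whereas the proposition requires $S$ on all of $\Dte^*$.

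The repair is exactly what the paper does: besides insetting the angles, make the \emph{target} domain a strictly smaller polydisc than the source, i.e.\ shrink $\varepsilon$ for the domain of $S$ (which simultaneously pushes the inner radius outward and thins the transversal tube), while $F$ is retained, by analytic continuation, on the original polydisc with the larger $\varepsilon$. Then the bounded strip that $F$ misses lies entirely outside the new target, and the rest of your argument goes through unchanged.
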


\begin{proof}
For a suitable small $\delta'$, the set $E'$ from the previous proposition contains the sector domain $D_{(\theta+\delta'/2,\eta-\delta'/2)}$, except for a strip of bounded width near the part of the boundary given by $|z_\ell| = \varepsilon^{-1}$. 
We shrink $\varepsilon$ to remove this bounded strip, and then the image of $F$ (analytically continued from the new $\Dte^*$ back to the original domain with larger $\varepsilon$) contains $D_{(\theta+\delta'/2,\eta-\delta'/2)}^*$. 
Since $F$ is injective on that domain, we can define $S$ to be its set-theoretic inverse map with domain $D_{(\theta+\delta'/2,\eta-\delta'/2)}^*$. 
Relabelling $\theta+\delta'/2$ as $\theta$ and $\theta+2\pi-\delta'$ as $\eta$, we get the $S$ of the statement of the proposition satisfying point~1. 
Points~2 and~3 follow from the properties of $F$. 
It follows from the inverse function theorem that $S$ is holomorphic on $\Dte$. 
Continuity of $S$ on $\Dte^*$ follows the same way as continuity of $F$ using the fact that $S(\zbar) - \zbar$ is bounded.

For point~5, observe that $\zbar = F(S(\zbar)) = S(\zbar) - G(S(\zbar))$, and so $S(\zbar) - \zbar = G(S(\zbar))$. By continuity of $S$, we get that $S(\zbar) \to S(\cbar) = \cbar$ for $\zbar \to \cbar$, hence $S(\zbar) - \zbar = G(S(\zbar)) \to G(\cbar)$ for $\zbar \to \cbar$ by continuity of $G$.
\end{proof}

\paragraph{Step 5: Analytic continuation of the solution map \texorpdfstring{$S$}{S}.}

In this final step, we finish the proof of Theorem~\ref{thm-main-abelian} by considering the maps $S$ from the domains $\Dte^*$ as a sheaf giving a multivalued map. Just as we did for $\mathbf{F}$ and $\mathbf{G}$, the union of the maps $S$ yields a multivalued map $D^* \to \P_n$. We now push this further by allowing $\cbar$ to vary along the set $C \subseteq \P_n \setminus \C^n$ of Proposition~\ref{alg map prop}. We shall also restrict the domains from $D^* \subseteq \P_n$ to $D \subseteq \C^n$, in order to get \emph{analytic} maps, without the potential singularities at infinity which become irrelevant in our final conclusion.

For each $\cbar \in C$ we have an open polydisc $D^*$ around $\cbar$. Since $C$ is open in $H = \P_n \setminus \C^n$, we may always assume, after shrinking $D^*$, that $D^* \cap H$ is a subset of $C$. We now write this $D^*$ as $D^*_{\cbar}$ and define $\Omega^* = \bigcup_{\cbar\in C} D^*_{\cbar}$, an open subset of $\P_n$. Then $\Omega^*\setminus \C^n$ is indeed $C$ since obviously $\cbar \in D_{\cbar}^*$, so by construction $C \subseteq \Omega^* \cap H \subseteq C$.
 
For each $\cbar$ and for each interval $(\theta,\eta) \subs \R$ with $\eta \le \theta+2\pi - \delta'$ we have a map $S:D^*_{\cbar,(\theta,\eta)}\to \P_n$. It is clear that such maps, when restricted to $D_{\cbar,(\theta,\eta)}$ so to become analytic, are continuations of each other, in the following sense: for any two maps $S$, $\tilde{S}$ as above, the set $\{ S(\zbar) = \tilde{S}(\zbar) \}$ is both closed and open in the intersection of their domains. Thus, as in Remark~\ref{sheaf remark}, their restrictions to $\Omega = \Omega^* \cap \C^n$ generate a sheaf $\mathbf{S}$ of analytic maps, the union of which is an analytic multivalued map $\Omega \to \C^n$.

This gives us the data of Theorem~\ref{thm-main-abelian}. We can now prove that $\mathbf{S}$ has the required properties.

\begin{enumerate}
  \item Note that $\mathbf{S}$ is a local homeomorphism, because its local inverses are by Proposition~\ref{prop:Jac < epsilon}. Thus its image $\mathbf{S}(\Omega)$ is open.
  
  Fix a fundamental domain of $\C/\Lambda$, and let $\nu$ be its diameter. We claim that for every $\bm{a} \in \Omega$, there is a branch $S$ of $\mathbf{S}$ such that $|S(\abar) - \abar| < \mu + \nu$. Indeed, if we pick $\cbar$ and a sector domain of $D_{\cbar}$ containing $\bm{a}$, we can choose a branch $G$ of $\mathbf{G}$ on that sector domain with $|G(\cbar)| < \nu$, and find $|S(\abar) - \abar| = |G(S(\abar))| \leq |G(S(\abar)) - G(\cbar)| + |G(\cbar)| < \mu + \nu$ by Proposition~\ref{prop:bounded}. Moreover, we may assume that $|S(\zbar) - \zbar| < \mu + \nu$ for every $\zbar$ in a neighbourhood of $\abar$.
  
  Given this, it suffices to reason as in Proposition~\ref{prop:image-F abelian}. Suppose that $\Omega' = \{\zbar \in \Omega : \zbar + B \subseteq \Omega\}$ is not contained in $\Omega$, where $B$ is the closed ball at $\bm{0}$ of radius $\mu + \nu$. Then there exists $\abar \in \Omega'$ on the boundary of $\mathbf{S}(\Omega)$. Pick a local branch $S$ on a small neighbourhood of $\abar$, all contained in $\Omega'$, satisfying $|S(\zbar) - \zbar| < \mu + \nu$, and a sequence $\xbar_k$ in the image of $S$ such that $\xbar_k \to \abar$. Now observe that the preimages $S^{-1}(\xbar_k)$ must converge to some $\xbar \in \Omega$ such that $S(\xbar) = \abar$, a contradiction.

  It follows that every point of $\Omega$ not in $\mathbf{S}(\Omega)$ has distance at most $\mu + \nu$ from the boundary $\partial\Omega$ in $\C^n$.

  \item This follows from the 3\textsuperscript{rd} condition of Proposition~\ref{prop:maps S}, together with the fact that the branches of $\alpha$ cover $V$ (which is point~2 of Proposition~\ref{alg map prop}), with the same proviso as above about the strip of bounded width at the boundary.

  \item For each sector domain $\Dte$ of $\Omega$, there are $d$ distinct branches $G_1, \dots, G_d$ of $\mathbf{G}$ on $\Dte$, corresponding to the $d$ distinct branches of $\alpha$, such that every other branch is of the form $G_i + \lambar$ for some $\lambar \in \Lambda$. Thus the same is true for $\mathbf{F}$ for some branches $F_1, \dots, F_d$.

  Let $S_1, \dots, S_d$ be the corresponding branches obtained in Proposition~\ref{prop:maps S}. Then every branch of $\mathbf{S}$ is of the form $S_i(\zbar + \lambar)$ for some $1 \leq i \leq d$ and $\lambar \in \Lambda$.

  To conclude, we observe that such branches are all distinct. Suppose that $S_i(\zbar + \lambar) \equiv S_{i'}(\zbar + \lambar')$ on $\Dte$. Then also $F_i - \lambar \equiv F_{i'} - \lambar'$ on $\Dte$, as the branches of $\mathbf{F}$ are local inverses of the branches of $\mathbf{S}$. In turn, $i = i'$ and $\lambar = \lambar'$, as desired.

  \item This is the asymptotic condition from Proposition~\ref{prop:maps S}, with the points $\gambar$ being the values of $G(\cbar)$.

  \item It is now clear that the set $\mathbf{S}(\Omega \cap \Lambda)$ is Zariski dense in $\C^n$ and, since $\dim V = n$, it follows at once that $\{ (\zbar,\exp_A (\zbar))\in V \}$ is Zariski dense in $V$.
\end{enumerate}

That completes the proof of Theorem~\ref{thm-main-abelian}. \qed

\begin{remark} 
For each $\cbar$, the polydisc $D^*_{\cbar}$ is given with a radius $\varepsilon = \varepsilon_{\cbar}$, but we do not have any uniformity in $\varepsilon$ as $\cbar$ varies. Indeed, as $\cbar$ approaches the boundary of $C$, we may have $\varepsilon_{\cbar} \to 0$. Furthermore, the index $\ell$ could vary as well. For points $\abar \in \P_n \setminus (\C^n \cup C)$, the algebraic map $\alpha$ may have worse singularities than the ramification points we have dealt with. For these reasons, we do not have a complete description of all the exponential points $(\zbar,\exp_A(\zbar)) \in V$ which are large, that is, such that $|\zbar|$ is larger than some given $\varepsilon^{-1}$. However, each exponential point is known to be isolated, so they cannot accumulate anywhere in $\C^n$, so the points we have found should be a large proportion of the total in any meaningful sense.
\end{remark}

\begin{remark}\label{remark: general alpha}
  Our method can be used to prove the existence of solutions of any exponential equations of the form $\exp_A(\zbar) = \beta(\zbar)$, where $\beta : \Dte^* \to \C^n$ is holomorphic on $\Dte$ and continuous on $\Dte^*$, and $\theta, \eta$ are given. In steps 2--4, one can simply omit all references to the uniformity in $\theta, \eta$, and also replace $\theta, \eta$ with suitable values $\theta', \eta'$ satisfying $\theta < \theta' < \eta' < \eta$ when necessary.

  If one has a sufficiently rich understanding of the analytic continuations of $\beta$ around the points at infinity, the arguments of step 5 could be used to give a global description of the solutions in the style of Theorem~\ref{thm-main-abelian}.
\end{remark}

\section{The case of algebraic tori}\label{alg torus section}

\setcounter{assumption}{0}

Let $\exp: \C \to \gm$ be the usual exponential map. We will also let $\exp: \C^n \to \gm^n$ denote the exponential map of $\gm^n$ for any $n$, given by coordinate-wise action of the former map. The lattice of periods of $\exp$ is $\Lambda:= (2\pi i \Z)^n$.  We will write $\Log$ for the logarithmic map corresponding to $\exp$, and $\log : \R_{>0} \to \R$ for the real logarithm.

In this section we adapt the ideas of the previous section to prove a theorem for algebraic tori, analogous to Theorem~\ref{thm-main-abelian}. The only difference is the asymptotic behaviour of the solutions, which are no longer asymptotically translations of lattice points.

\begin{theorem}\label{thm-main-torus}
Let $V\seq \C^n \times \gm^n$ be a subvariety of dimension $n$ with dominant projection $\pi: V \to \C^n$. Let $d:= \deg \pi$. We embed $\C^n$ in projective space $\P_n$ in the usual way \eqref{Cn into Pn}.

Then there is a subset $\Omega^* \subs \P_n$, which is open in the complex topology, such that $C\leteq \Omega^* \setminus \C^n$ is Zariski open dense in $\P_n\setminus \C^n$, and there is a sheaf $\Sbar$ of analytic maps on $\Omega := \Omega^* \cap \C^n$ taking values in $\C^n$ with the following properties:

\begin{enumerate}
\item The image $\mathbf{S}(\Omega)$ contains $\Omega$ except possibly for a narrow strip along the boundary $\partial \Omega$.

\item For $\lambar \in \Omega \cap \Lambda$, each
value of $\Sbar(\lambar)$ satisfies $(\Sbar(\lambar),\exp(\Sbar(\lambar))) \in V$. Furthermore, these are the only exponential points $(\zbar, \exp(\zbar))$ of $V$ with  $\zbar \in \Omega$ (except possibly near the boundary).

\item These exponential points are locally in $d$-to-$1$ correspondence with the points of $\Lambda \cap \Omega$: $\mathbf{S}$ has $d$ branches, possibly up to translation of the argument by elements of $\Lambda$. 

\item 
The solutions $\Sbar(\lambar)$ are asymptotically close to lattice points: for each $\cbar \in C$ and each branch $S$ of $\mathbf{S}$ we have $S(\zbar) = \zbar + O(\log |\zbar|)$ for $\zbar \to \cbar$.

\item In particular, the set $\mathbf{S}(\Omega \cap \Lambda)$ is Zariski dense in $\C^n$, and the set of exponential points $\{ (\zbar,\exp (\zbar))\in V \}$ is Zariski dense in $V$.
\end{enumerate}

\end{theorem}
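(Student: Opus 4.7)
The plan is to run the five-step proof of Theorem~\ref{thm-main-abelian} with only minor modifications, replacing the compactness of $A$ by a polynomial growth estimate on the algebraic map $\alpha$. First I would take $(\P_1)^n$ as a completion of $\gm^n$, form the closure $\Vbar$ of $V$ in $\P_n \times (\P_1)^n$, and apply Proposition~\ref{alg map prop} to obtain a Zariski open dense $C \subseteq \P_n \setminus \C^n$ and, for each $\cbar \in C$ and sector domain $\Dte^*$, continuous maps $\alpha_1,\dots,\alpha_d : \Dte^* \to (\P_1)^n$ which restrict analytically to $\Dte$. Since $V \subseteq \C^n \times \gm^n$, each $\alpha_i$ sends $\Dte$ into $\gm^n$, so I can choose, using simple connectedness of $\Dte^*$, a holomorphic branch $G = \Log \circ \alpha : \Dte \to \C^n$ and set $F(\zbar) := \zbar - G(\zbar)$. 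As before, $\exp(\zbar) = \alpha(\zbar)$ on $\Dte$ if and only if $F(\zbar) \in \Lambda$.

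The key estimate replacing Proposition~\ref{prop:bounded} is that each $\alpha_k$ extends algebraically to $\Vbar \cap (U_\ell \times (\P_1)^n)$, so admits a local Puiseux-style expansion in $x_0 = 1/z_\ell$ near $\cbar$; hence $|\alpha_k(\zbar)|^{\pm 1} = O(|z_\ell|^N)$ uniformly on $\Dte$ for some $N$, and therefore $|G(\zbar)| = O(\log|\zbar|)$. Applying the Cauchy Estimate (Fact~\ref{lem:bounded-deriv}) on polydiscs of radius proportional to $|z_\ell|$ around each $\zbar \in \Dte$ then gives $\|\dd G(\zbar)\| = O((\log|\zbar|)/|\zbar|) \to 0$. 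Unlike in the abelian case $G$ is unbounded, so $F$ does not extend continuously to $\Dte^*$; I would therefore work directly on $\Dte$ and only patch in the behaviour at infinity at the end.

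The main obstacle is redoing the injectivity argument of Proposition~\ref{prop:injective} in the presence of an unbounded $G$. The mean value inequality still forces $\xbar = \ybar$ when the segment $[\xbar,\ybar]$ lies in $\Dte$, since $\|\dd G\| < 1/2$ holds once $|z_\ell|$ exceeds a threshold. When the segment leaves $\Dte$, from $F(\xbar) = F(\ybar)$ one still has $|\xbar - \ybar| = |G(\xbar) - G(\ybar)| = O(\log|z_\ell|)$; for $|z_\ell|$ large this is small compared to $|z_\ell|$ itself, so the angular gap between $x_\ell$ and $y_\ell$ is $O((\log|z_\ell|)/|z_\ell|)$, eventually below any fixed $\delta > 0$, and the segment cannot cross the excluded angular wedge. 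The analogous argument for the inner boundary $|z_\ell| = \varepsilon^{-1}$ succeeds once we discard an annular strip of width $O(\log|z_\ell|)$. On the remaining set $F$ is injective with non-singular Jacobian, and its inverse $S$ satisfies $S(\zbar) - \zbar = G(S(\zbar)) = O(\log|\zbar|)$, giving precisely the asymptotic of point~4 of the theorem.

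The image argument of Proposition~\ref{prop:image-F abelian} carries over with the fixed closed ball $B$ replaced by a logarithmically growing neighbourhood: $F(\Dte)$ contains $\Dte$ except for a strip of width $O(\log|\zbar|)$ along the boundary. Because $\Lambda$ has fixed spacing, this strip is still negligible, so most lattice points $\lambar \in \Lambda$ lie in $F(\Dte)$ and are mapped by $S$ onto exponential points $(\zbar,\exp\zbar) \in V$, which are then Zariski dense in $V$. Step~5, the patching of the local maps $S$ into a sheaf $\Sbar$ over $\Omega$ via analytic continuation, is verbatim as in Section~\ref{abelian case section}, and the five properties of the theorem follow as in the abelian case with the logarithmic strip replacing the bounded one. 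Throughout, no genuinely new technique beyond that of Section~\ref{abelian case section} is needed; the work lies entirely in keeping logarithmic factors under control against the polynomial scale $|z_\ell|$.
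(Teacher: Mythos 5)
Your outline follows the paper's own five-step proof closely, and the core analytic ingredients (the logarithmic bound $|G(\zbar)| = O(\log|\zbar|)$, the Cauchy estimate on polydiscs of radius comparable to $|z_\ell|$ giving $\|\dd G\| \to 0$, the linear-versus-logarithmic comparison in the injectivity step, and the image argument with a logarithmically growing strip) are exactly those used in the paper. However, two steps have genuine gaps. First, the claim that ``since $V \subseteq \C^n \times \gm^n$, each $\alpha_i$ sends $\Dte$ into $\gm^n$'' is false as stated: $\Vbar$ is the closure in $\P_n \times (\P_1)^n$ and can contain points $(\zbar,\wbar)$ with $\zbar$ finite but a coordinate of $\wbar$ equal to $0$ or $\infty$ (already for $V=\{zw=1\}$ one gets $(0,\infty)\in\Vbar$), and at such points $\Log\circ\alpha$ is undefined and your two-sided bound $|\alpha_k(\zbar)|^{\pm 1} = O(|z_\ell|^N)$ fails. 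The paper removes the locus $Z$ of such points from the polydiscs and deletes its limit set at infinity from $C$, and \emph{additionally} shrinks $C$ so that $\cbar$ has no zero coordinate; this last condition is what forces all $|z_i|$ to be comparable to $|z_\ell|$ and bounded below on $D$, and a Puiseux expansion in $x_0 = 1/z_\ell$ alone does not deliver uniformity in the remaining $n-1$ variables, along which zeros and poles of $\alpha_k$ could still cross the polydisc. (A smaller factual slip: $F$ \emph{does} extend continuously to $\Dte^*$, precisely because $G = O(\log|\zbar|) = o(|\zbar|)$; your workaround of staying on $\Dte$ is harmless but the stated reason is wrong.)

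The second and more serious gap is in point~5. You assert that ``most lattice points $\lambar\in\Lambda$ lie in $F(\Dte)$'' for a single sector domain, but $\Lambda = (2\pi i\Z)^n$ has real rank $n$, lies in $(i\R)^n$, and accumulates only at the real rational directions $[0:m_1:\cdots:m_n]$ of the hyperplane at infinity; a polydisc around a generic $\cbar \in C$ contains \emph{no} lattice points whatsoever. The Zariski density of $\Sbar(\Omega\cap\Lambda)$ therefore does not follow ``as in the abelian case'' (where $\Lambda$ has full real rank $2n$ and meets every polydisc at infinity); one must argue separately that the rational directions meeting $C$ are plentiful enough that the lattice points lying in the corresponding sector domains are Zariski dense in $\C^n$. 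The paper explicitly identifies this failure of accumulation as one of the two essential differences between the torus and abelian cases, and your proposal does not address it.
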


The proof follows that of Theorem \ref{thm-main-abelian} closely, so we will focus on the differences. The two essential differences are that abelian varieties are compact whereas  algebraic tori are not, and (relatedly) that the lattice $(2\pi i\Z)^n$ does not accumulate to every point at infinity in the complex topology (although it does in the Zariski topology). 
In the case of algebraic tori we have to deal with two extra points $0$ and $\infty$ (in dimension~1). Furthermore, any branch of the logarithmic map of an abelian variety is bounded, which is not true for algebraic tori but we are able to make do with logarithmic growth instead.

As in the abelian case, we split the proof into several steps.

\paragraph{Step 1: The algebraic maps.}

We embed $\gm$ into $\P_1$ identified with $\C\cup \{ \infty \}$, and consider the Zariski closure $\overline{V}$ of $V$ in $\P_n \cross \P_1^n$. We shall apply Proposition~\ref{alg map prop} and extract algebraic maps from the $\overline{V}$, but we want the images of those maps to be contained in $\gm^n$. To this end, let $Z:= \{ \zbar \in \C^n : (\zbar, 0)\in \overline{V} \text{ or } (\zbar, \infty)\in \overline{V} \}$. Then $Z$ has codimension~$\geq 1$ in $\C^n$, and the set $Z^*$ of its limit points in $H:=\P_n\setminus \C^n$ is a lower dimensional Zariski closed subset of $H$. So we shrink the set $C$ given by Proposition~\ref{alg map prop} by removing $Z^*$. To get our growth estimates later, we shrink $C$ further and assume that for any point $[0:t_1:\ldots:t_n]\in C$ none of the $t_i$ is $0$. One consequence is that we can work with the fixed chart $U_1$ rather than a varying chart $U_\ell$.

 Thus, we end up with the following data:

\begin{itemize}
    \item $C$, a Zariski open dense subset of $H$ such that for any point $[0:t_1:\ldots:t_n]\in C$ none of the $t_i$ is $0$;
    \item $\cbar:=[0:1:c_2:\ldots:c_n]$ an arbitrary point of $C$;
    \item $D^*$ a small polydisc at $\cbar$ in the chart ${U}_1$ as defined in \eqref{eq:polydisc}, chosen small enough that it does not meet the set $Z$ given above;
\end{itemize}

As in the abelian case, we will shrink $D^*$ (by reducing $\varepsilon$) to ensure certain properties of certain maps hold, which will be explicitly stated every time. In particular, we choose the $\varepsilon$ defining $D^*$ to be at most $\min \left\{\frac{|c_i|}2, \frac1{2|c_i|} : i=1,\ldots,n\right\}$ so that for any $\zbar = (z_1,\ldots,z_n)\in D$ we have $|z_1| > 2$, all other $|z_i| > 1$, and the coordinates of $\zbar$ are roughly proportional to each other:
\be\label{eq:roughly prop}
    \frac12 < \frac{|z_i|}{|c_i z_1|} < \frac32.
\ee
Of course, this assumption cannot be made uniformly as $\cbar$ varies, since we can have $|c_i|$ arbitrarily small.

For each $\theta \in \R$ and each $\eta \in (\theta,\theta+2\pi]$ we have the sector domains $\Dte \subs \C^n$ and their extensions $\Dte^*$ to $\P_n$ as given in \eqref{eq:sector domain} and \eqref{eq:sector domain 2}, and the algebraic maps
\begin{equation}
    \alpha^1, \dots, \alpha^d : \Dte^* \to (\P_1)^n. 
\end{equation}
As before, we drop the indices and write $\alpha$ to denote one of these maps.

\paragraph{Step 2: Mapping the solutions to the lattice.} 

Since $D \cap Z = \emptyset$, the restriction of $\alpha$ to $\Dte$ takes values in $\gm^n$. 
So, as in Section~\ref{abelian case section}, we  can choose a holomorphic branch $G$ of $\Log \circ\alpha$ on $D_{(\theta,\eta)}$:
\begin{gather}
      G : D_{(\theta,\eta)} \to \C^n \text{ holomorphic on } D_{(\theta, \eta)}, \nonumber \\ \exp(G(\zbar)) = \alpha(\zbar) \text{ for all } \zbar \in D_{(\theta,\eta)}.  \label{eq:G-alpha-torus}
\end{gather}
In this case, we cannot necessarily continue $G$ to a map $\Dte^*\to \C^n$, since as $\zbar \to \cbar \in C$ we may have some coordinate $\alpha_i(\zbar) \to 0 \text{ or } \infty$, where the logarithm is not defined.

We also remark that, as in the abelian case, the different choices of maps $G$ together yield a multivalued map $\Gbar : D \to \C^n$ with associated sheaf of analytic maps as in Remark~\ref{sheaf remark}.

By our choice of $\varepsilon$, all coordinates are roughly proportional on $D$ and are larger than $1$ in absolute value. Since $\alpha$ is an algebraic map, for each coordinate function $\alpha_i$ there is a positive integer $q_i\in \N$ such that for all $\zbar \in D$ we have
\begin{equation}\label{eq: alpha growth}
     |\zbar|^{-q_i} < |\alpha_i(\zbar)| < |\zbar|^{q_i}.
\end{equation}
Let $q := \max \{ q_i : 1\leq i \leq n \}$. Then for each coordinate-function $G_i$ of $G$, we have
\[ \left| \re(G_i(\zbar)) \right| = \left|\log |\alpha_i(\zbar)|\right| \leq q \log |\zbar|. \]

Similarly, the argument of $\alpha_i(\zbar)$ is bounded on the sector domain $\Dte$, hence so is the imaginary part of $G_i(\zbar)$.
So the ratio 
\[ \frac{G_i(\zbar)}{\log |\zbar|}  \]
is bounded. Thus, $|G(\zbar)| = O(\log |\zbar|)$ for $\zbar \in \Dte$, that is, $G$ has logarithmic growth as $|\zbar| \to \infty$.

Now define a map $F : \Dte^* \to \C^n$ by
\be
F(\zbar) \leteq 
\begin{cases}
\zbar - G(\zbar), &\text{ for } \zbar \in \Dte,\\
\zbar, &\text{ for } \zbar \in \Dte^*\setminus \Dte.
\end{cases}
\label{GM eq for F}
\ee

\begin{proposition}
    The map $F$ enjoys the following properties.
    
    \begin{itemize}
        \item  $F$ is continuous on $\Dte^*$ and holomorphic on $\Dte$;
        
        \item $F(\Dte)\seq \C^n$;
        
        \item A point $\zbar \in \Dte$ is a solution to the equation $\exp(\zbar) = \alpha(\zbar)$ if and only if $F(\zbar) \in \Lambda$.
    \end{itemize}
\end{proposition}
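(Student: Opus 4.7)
The plan is to follow Proposition~\ref{prop F cont infty} from the abelian case closely, with the only new wrinkle being that in the torus setting $G$ is no longer bounded on $\Dte^*$ but only satisfies the logarithmic bound $|G(\zbar)| = O(\log|\zbar|)$ established just above the statement. Two of the three bullets are essentially formal. Holomorphicity of $F$ on $\Dte$ is immediate from holomorphicity of $G$ there, and the inclusion $F(\Dte) \subseteq \C^n$ is clear from the defining formula since $G(\zbar) \in \C^n$. For the third bullet, I rewrite $\exp(\zbar) = \alpha(\zbar)$ as $\exp(\zbar) = \exp(G(\zbar))$ using \eqref{eq:G-alpha-torus}, and the equivalence $F(\zbar) = \zbar - G(\zbar) \in \Lambda$ then follows from the fact that $\Lambda$ is the kernel of $\exp$.

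The substantive task is verifying continuity of $F$ at a boundary point $\abar \in \Dte^*\setminus\Dte$. Such an $\abar$ lies at infinity in the fixed chart $U_1$, so write $\abar = [0:1:a_2:\ldots:a_n]$, which in the $U_1$-coordinates becomes $(0,a_2,\ldots,a_n)$; by the definition $F(\abar) = \abar$. For a finite point $\zbar = (z_1,\ldots,z_n) \in \Dte$, the $U_1$-coordinates of $F(\zbar)$ are $\bigl(1/F_1(\zbar),\, F_2(\zbar)/F_1(\zbar),\, \ldots,\, F_n(\zbar)/F_1(\zbar)\bigr)$, where $F_i(\zbar) = z_i - G_i(\zbar)$. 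Using \eqref{eq:roughly prop} to assert that $|z_i|$ and $|\zbar|$ are comparable to $|z_1|$, and the logarithmic bound $|G_i(\zbar)| = O(\log|z_1|)$, I can conclude that each ratio $G_i(\zbar)/z_j$ tends to $0$ as $\zbar \to \abar$, since $\log|z_1|/|z_1| \to 0$. Hence $1/F_1(\zbar) = (1/z_1) \cdot (1 - G_1(\zbar)/z_1)^{-1} \to 0$, and $F_i(\zbar)/F_1(\zbar) = (z_i/z_1)(1 - G_i(\zbar)/z_i)/(1 - G_1(\zbar)/z_1) \to a_i$, exactly as required for continuity at $\abar$.

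The main obstacle, as compared to the abelian case, is precisely this asymptotic argument: boundedness of $G$ is no longer available, and one has to verify that its logarithmic growth is dominated by the linear growth of $\zbar$ both when inverting $F_1$ and when forming the ratios $F_i/F_1$. This is why we built in \eqref{eq:roughly prop} and why we reduced $C$ so that no coordinate $c_i$ vanishes: both ingredients are used to keep $|z_i|$ uniformly comparable to $|z_1|$ on $D$, so that the logarithmic-versus-linear comparison works in every coordinate. Once this estimate is in hand, the three bullets follow in the same form as in the abelian setting.
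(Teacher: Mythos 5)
Your proposal is correct and follows essentially the same route as the paper: the paper likewise observes that the only non-formal point is continuity at $\abar \in \Dte^* \setminus \Dte$, and that the abelian-case argument goes through because the logarithmic growth of $G$ is negligible compared to $|\zbar|$ in the chart $U_1$. You simply make explicit the $U_1$-coordinate computation that the paper leaves implicit.
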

\begin{proof}
As in Proposition~\ref{prop F cont infty}, this is almost immediate except for showing that $F$ is continuous at any point  $\abar \in \Dte^* \setminus \Dte$. In the abelian case this followed since $G(\zbar)$ was bounded. Here we have $|G(\zbar)|$ growing logarithmically in $|\zbar|$, so again as we take the limit going to infinity in projective space, it becomes negligible compared to $|\zbar|$.
\end{proof}

As for $\mathbf{G}$, the maps $F$ and their continuations yield a sheaf of analytic maps from $D$ to $\C^n$, which we write as $\mathbf{F}(\zbar) = \zbar - \mathbf{G}(\zbar)$.

\paragraph{Step 3: Local injectivity of \texorpdfstring{$F$}{F}.}

Now we want to show that $F$ is injective on suitable domains $\Dte$. As in the abelian case, we need to estimate the partial derivatives of $G$.

\begin{lemma}\label{lem:log deriv}
There is $K >0$ such that, for any $\wbar \in \Dte$ and any $r>0$ such that the closed polydisc $T$ of radius $r$ at  $\wbar$ is contained in $\Dte$, we have
\be \| \dd G(\wbar) \| \leq K\frac{\log |\wbar|}{r}.\label{eq-log-growth} \ee
\end{lemma}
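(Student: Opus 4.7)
The plan is to apply the Cauchy Estimate (Fact~\ref{lem:bounded-deriv}) coordinate-wise to the holomorphic map $G$ on the polydisc $T$, and to convert the previously established global bound $|G(\zbar)| = \bigO(\log|\zbar|)$ on $\Dte$ into a bound in terms of $\log|\wbar|$ alone.

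First, for each coordinate function $G_i$ of $G$ and each variable $z_k$, the Cauchy Estimate yields
\[
\left|\frac{\partial G_i}{\partial z_k}(\wbar)\right| \leq \frac{\max_{\zbar \in T}|G_i(\zbar)|}{r}.
\]
By the discussion preceding \eqref{GM eq for F}, there is a constant $K'>0$ (depending on the exponents $q_i$ and on the bounds on the arguments coming from the sector) such that $|G(\zbar)| \leq K' \log|\zbar|$ for all $\zbar \in \Dte$. Since $T \subseteq \Dte$, this bound applies for every $\zbar \in T$.

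The remaining step is to compare $\log|\zbar|$ with $\log|\wbar|$ uniformly for $\zbar \in T$. Because $T \subseteq \Dte$, every $\zbar \in T$ satisfies $|z_1| > \varepsilon^{-1}$ and $|z_1 - w_1| \leq r$, which forces $r < |w_1|$ (otherwise the polydisc would contain points with $|z_1| \leq \varepsilon^{-1}$). Hence $|z_1| \leq |w_1| + r \leq 2|w_1|$, and then by the proportionality bound \eqref{eq:roughly prop} applied to both $\zbar$ and $\wbar$, there is a constant $C>0$ (depending only on $\cbar$ and $\varepsilon$) such that $|\zbar| \leq C|\wbar|$ for all $\zbar \in T$. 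Taking logarithms gives $\log|\zbar| \leq \log|\wbar| + \log C$, and since $|\wbar| > \varepsilon^{-1} > 1$ (we shrink $\varepsilon$ if needed) we have $\log|\wbar| > 0$, so $\log C$ can be absorbed into a multiplicative constant: $\log|\zbar| \leq K'' \log|\wbar|$ for some $K'' > 0$.

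Combining these bounds, each partial derivative of each coordinate of $G$ at $\wbar$ is bounded by $K' K'' \log|\wbar|/r$, and taking the $\ell^\infty$-norm of the Jacobian gives $\|\dd G(\wbar)\| \leq K \log|\wbar|/r$ for $K \leteq K'K''$. The only subtlety is handling the uniformity of the constants as $\cbar$ varies (they are not uniform in $\cbar$, but that is fine since the lemma is stated for a fixed sector domain). No serious obstacles are expected.
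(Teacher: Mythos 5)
Your proposal is correct and follows essentially the same route as the paper: apply the Cauchy Estimate coordinate-wise on the polydisc $T$, invoke the logarithmic growth $|G(\zbar)| = \bigO(\log|\zbar|)$ established in Step~2, and use $T \subseteq \Dte$ to deduce $r < |\wbar|$ and hence $|\zbar| \leq |\wbar| + r < 2|\wbar|$, so that $\log|\zbar| = \bigO(\log|\wbar|)$ on $T$. The paper gets $|\zbar| = \bigO(|\wbar|)$ slightly more directly (using $|z_m|\geq 1$ for every coordinate rather than passing through the proportionality bound \eqref{eq:roughly prop}), but this is an immaterial variation.
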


\begin{proof}
It suffices to establish such estimates for all partial derivatives of all coordinates of $G$. Let $g$ be a coordinate function of $G$. Then $g$ has logarithmic growth in $\zbar$, so applying the Cauchy Estimate (Fact \ref{lem:bounded-deriv}) yields 
\be\label{eq:alpha-der-max}
\frac{\partial g(\wbar)}{\partial z_k} = O\left(\frac{\max_{\zbar \in T}\log |\zbar|}{r}\right) .
\ee

If $T\seq \Dte$ is a polydisc of radius $r$, since $|z_m|\geq 1$ on $\Dte$ for every $m$, we must have $r< |w_m|$ for every $m$, hence $r< |\wbar| $. Therefore, if $\zbar \in T$ then $|\zbar| \leq |\wbar| + |\zbar - \wbar| \leq |\wbar| + r < 2 |\wbar|$. This then implies that $\log|\zbar| = O(\log|\wbar|)$ for $\zbar \in T$. So the desired bound follows from \eqref{eq:alpha-der-max}.
\end{proof}

We can now prove the analogue of Proposition \ref{prop:Jac < epsilon}, with a similar proof.
\begin{proposition}\label{prop:Jac < epsilon torus}
   For all $\nu > 0$, there is $D^*$ small enough such that for all intervals $(\theta,\eta)$, for all $\zbar \in \Dte$, the norm of the Jacobian matrix of the first partial derivatives $\dd G(\zbar)$ is less than $\nu$.
   
   In particular, for small enough $D^*$, for all branches $F$, $G$ of $\Fbar$, $\Gbar$ respectively on every sector domain $\Dte^*$, for all $\zbar \in \Dte$ we have:
   \begin{itemize}
       \item $\|\dd G(\zbar)\| < 1/2$,
       \item $\det(\dd F(\zbar)) \neq 0$.
   \end{itemize}
\end{proposition}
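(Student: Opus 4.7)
The plan is to adapt the argument of the abelian Proposition~\ref{prop:Jac < epsilon} by using a polydisc of radius proportional to $|w_1|$ rather than the fixed radius~$1$; this compensates for the logarithmic (rather than bounded) growth of $G$. Since the polydisc of radius $\sim|w_1|$ does not fit inside $\Dte$ (the proportionality constraint $|z_i - c_i z_1|<\varepsilon|z_1|$ only permits radius $\sim\varepsilon|w_1|$, which already limits the naive Cauchy bound to a non-vanishing $O(\log(\varepsilon^{-1}))$ at $|w_1|\sim\varepsilon^{-1}$), we must extend $G$ analytically beyond $\Dte$ and apply the Cauchy estimate there.

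The key geometric step is the following. By the construction of $C$ in Step~1 of Section~\ref{alg torus section}, the point $\cbar$ avoids the ramification locus $B$ of $\alpha$, since $C$ excludes the points where $\alpha$ has collapsed or infinite fibres. Hence $B$ stays at some Euclidean distance $\delta_B > 0$ from $\cbar$ in the chart $U_1$. A short computation with the transition maps $x_0 = 1/z_1$, $x_i = z_i/z_1$ shows that for $\wbar \in D$ with $D^*$ small enough, the $\C^n$-polydisc $T$ of radius $r := c|w_1|$ at $\wbar$, for a suitable constant $c > 0$ depending on $\delta_B$ and the $|c_i|$, is simply connected, disjoint from $B$, and satisfies $r < |\wbar|$. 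Since $\alpha$ is algebraic, it continues to satisfy a polynomial bound on $T$ (with possibly modified constants), so the extension $\tilde G$ of $G$ to $T$ still satisfies $|\tilde G(\zbar)| = O(\log|\zbar|)$. The proof of Lemma~\ref{lem:log deriv}, which uses only logarithmic growth and $r < |\wbar|$ (not that $T\subseteq \Dte$ literally), then yields
\[ \|\dd G(\wbar)\| = \|\dd \tilde G(\wbar)\| \leq K\frac{\log|\wbar|}{r} = \frac{K}{c}\cdot\frac{\log|\wbar|}{|w_1|}. \]

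Since $|\wbar|$ is comparable to $|w_1|$ by~\eqref{eq:roughly prop} and $\log x/x$ is decreasing for $x \geq e$, on $|w_1| \geq \varepsilon^{-1}$ with $\varepsilon \leq 1/e$ we have $\log|w_1|/|w_1| \leq \varepsilon\log(\varepsilon^{-1}) \to 0$ as $\varepsilon\to 0$. Hence shrinking $D^*$ enforces $\|\dd G(\wbar)\| < \nu$ uniformly in the branch $G$ and the interval $(\theta,\eta)$. Applying this with $\nu = 1/2$ yields $\|\dd G\|<1/2$, and then $\dd F = I - \dd G$ has all eigenvalues of modulus at least $1/2$, so $\det(\dd F(\zbar))\neq 0$. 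The main obstacle is the geometric estimate producing the large polydisc $T$ disjoint from $B$: in the abelian case this is hidden behind the fixed radius $1$ (automatically less than the $\sim\delta_B|w_1|$ distance to $B$), but the logarithmic growth in the torus setting forces us to exploit the projective geometry around $\cbar$ explicitly.
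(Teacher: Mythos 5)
Your argument is essentially the paper's: the paper's proof applies Lemma~\ref{lem:log deriv} to a polydisc of radius $\nu^{-1}K\log|\zbar|$, which fits inside the \emph{original} domain $D$ once $|\zbar|$ is large enough (the remaining points being discarded by shrinking $D^*$ while keeping $G$ defined on the original domain), and your linear-radius polydisc obtained by continuing $G$ beyond $\Dte$ is exactly the device the paper itself deploys later in the proof of Proposition~\ref{prop:maps S torus}(5); your observation that the continuation must leave the current sector domain is a fair reading of the paper's terse ``otherwise $|\zbar|$ is small'' step (and for the continuation of $\Log\circ\alpha$ you should also note that $T$ avoids the set $Z$ where some $\alpha_i\in\{0,\infty\}$, not only the ramification locus). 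One genuine slip at the end: the paper fixes the entrywise $\ell^\infty$ norm on $\C^{n^2}$, and with that norm $\|\dd G\|<1/2$ does \emph{not} force the eigenvalues of $I-\dd G$ to have modulus at least $1/2$ (for $n=3$ the matrix with every entry equal to $1/3$ has eigenvalue $1$, so $I-\dd G$ would be singular); the second bullet instead follows from the first part of the proposition by taking $\nu$ smaller than, say, $1/(2n)$, which your uniform estimate already provides.
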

\begin{proof}
 Let $K$ be as in Lemma \ref{lem:log deriv}, and let $\zbar \in D$. If the polydisc of radius $r > \nu^{-1} K \log |\zbar|$ around $\zbar$ is entirely contained in $D$ then it is contained in  $D_{(\arg(z_1)-\pi,\arg(z_1)+\pi)}$, and then by Lemma \ref{lem:log deriv} we get $\| \dd G(\zbar) \| < \nu$. 
 Otherwise $|\zbar|$ is small, and we can shrink $D^*$ to remove this case. The rest follows easily.
\end{proof}

\begin{prop}\label{prop:injective-torus}
For small enough $D^*$, there is a small $\delta>0$ such that the map $F$ with domain $D^*_{(\theta,\theta + 2\pi - \delta)}$ is injective.
\end{prop}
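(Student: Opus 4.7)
The plan is to mirror the structure of the proof of Proposition~\ref{prop:injective} in the abelian case, while carefully adapting the quantitative estimates to the fact that $G$ now only satisfies the logarithmic bound $|G(\zbar)| = \bigO(\log|\zbar|)$ rather than being bounded. As before, $F$ is the identity on $\Dte^* \setminus \Dte$ and sends $\Dte$ into $\C^n$, which is disjoint from that boundary, so it suffices to show $F$ is injective on $\Dte$. I pick $\xbar, \ybar \in \Dte$ with $F(\xbar) = F(\ybar)$, so that $\xbar - \ybar = G(\xbar) - G(\ybar)$, and set $R := \max(|z_1^x|, |z_1^y|)$. Combining \eqref{eq: alpha growth} with the bounded variation of $\arg\alpha_i$ on the simply-connected sector, one obtains an upper bound of the form $|\xbar - \ybar| \leq C \log R + C'$, where the constants depend only on $n$, $q$, and the angular width of the sector.

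The argument then splits into three cases according to the location of the segment $[\xbar, \ybar]$. When the segment stays inside $\Dte$, the mean value inequality combined with $\|\dd G\| < 1/2$ from Proposition~\ref{prop:Jac < epsilon torus} immediately forces $\xbar = \ybar$. When the segment stays in the angular sector but dips below $|z_1| = \varepsilon^{-1}$, one shrinks $\varepsilon$ further so that $\xbar, \ybar$ automatically lie in a polydisc of strictly smaller radius: such dipping can only occur when $R$ is near $\varepsilon^{-1}$, and in that regime $|\xbar - \ybar| \leq C\log R + C'$ is uniformly bounded, so the second subcase of the proof of Proposition~\ref{prop:injective} goes through essentially unchanged.

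The hard case, and the main obstacle, is when the segment crosses $D \setminus \Dte$, i.e.\ the angular gap. In the abelian proof the constant bound $|\xbar - \ybar| < 2\mu$ was immediately contradicted by the chord estimate $|\xbar - \ybar| \geq 2\varepsilon^{-1}\sin(\delta/2)$ for small $\varepsilon$. Here the upper bound grows with $R$, so one needs an $R$-dependent lower bound. I would argue that if the segment crosses the gap then the arg function along the segment (which is monotone away from the origin) must traverse it, forcing $z_1^x$ and $z_1^y$ to lie on angularly opposite sides of the gap with short-way angular separation $\psi$ satisfying $\psi > \delta$. The elementary identity
\[ |z_1^x - z_1^y|^2 \;=\; (R_1 - R_2)^2 + 4 R_1 R_2 \sin^2(\psi/2), \qquad R_i := |z_1^{x,y}|, \]
then gives
\[ |\xbar - \ybar| \;\geq\; |z_1^x - z_1^y| \;\geq\; 2\sqrt{R_1 R_2}\,\sin(\delta/2) \;\geq\; 2\sqrt{\varepsilon^{-1} R}\,\sin(\delta/2), \]
using $\min(R_1, R_2) \geq \varepsilon^{-1}$. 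Since the right-hand side grows like $\sqrt R$ while the upper bound grows only like $\log R$, for any fixed small $\delta$ and for $\varepsilon$ small enough the lower bound beats the upper bound uniformly on $R \geq \varepsilon^{-1}$. This yields the desired contradiction and completes injectivity.
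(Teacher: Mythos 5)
Your proof is correct and follows essentially the same route as the paper's: the abelian argument of Proposition~\ref{prop:injective} is reused verbatim except in the gap-crossing case, where a polynomial lower bound on $|\xbar-\ybar|$ coming from the angular separation is played off against the logarithmic upper bound $|G(\xbar)-G(\ybar)|=O(\log R)$. Your chord identity gives a $\sqrt{\varepsilon^{-1}R}$ lower bound where the paper invokes one linear in $|z_1|$, but either suffices, and your explicit treatment of the dipping subcase matches the paper's intended (and much terser) argument.
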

\begin{proof}
The argument of Proposition \ref{prop:injective} goes through, except that we need to show that if $D^*$ is sufficiently small and $\xbar, \ybar \in D$ with $\arg(x_1) - \arg(y_1)=\delta$ then $|\xbar - \ybar| > |G(\xbar) - G(\ybar)|$. This can easily be deduced from the observation that $|\xbar - \ybar|$ is bounded below linearly in $|x_1|$ as $\xbar$ and $\ybar$ approach $\cbar$, while $|G(\xbar) - G(\ybar)|$ grows logarithmically in $|x_1|$.
\end{proof}

\paragraph{Step 4: Mapping the lattice to the solutions.}
We fix a small $\delta$ as in Proposition~\ref{prop:injective-torus}.

\begin{prop}\label{prop:image-F torus}
For small enough $D^*$ there is a small $\delta'$ such that for any $\theta \in \R$, writing $\eta = \theta+2\pi - \delta$, the image $F(\Dte^*)$ contains $D_{(\theta+\delta'/2,\eta-\delta'/2)}^*$, except for a strip of bounded width near the part of the boundary given by $|z_1| = \varepsilon^{-1}$. 
\end{prop}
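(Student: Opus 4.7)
The plan is to mirror the proof of Proposition~\ref{prop:image-F abelian}, with the essential modification that the uniformly bounded displacement $B$ is replaced by a logarithmically growing bound coming from Step~2. By Proposition~\ref{prop:Jac < epsilon torus}, $F$ is a local homeomorphism on $\Dte$, hence $E \leteq F(\Dte^*)$ is open in $\P_n$; moreover $F$ is the identity on $\Dte^* \setminus \Dte$, so $E$ already contains the entire ``at infinity'' portion of any smaller sector domain.

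The first step is to choose the correct $E'$. Using the bound $|G(\zbar)| \leq K_0 \log|\zbar|$ established in Step~2, if $\zbar \in \Dte$ satisfies $F(\zbar) = \xbar$, then $\zbar = \xbar + G(\zbar)$, so the angular deviation $|\arg(z_1) - \arg(x_1)|$ is at most $\arcsin(K_0 \log|z_1|/|z_1|)$ and the radial deviation $||z_1| - |x_1||$ is at most $K_0 \log|z_1|$. For $|z_1|$ sufficiently large, the angular deviation can be made less than $\delta'/4$ for any given $\delta' > 0$; and the radial deviation is at most some constant $M_0$ depending only on $\varepsilon$. Setting $\theta' \leteq \theta + \delta'/2$, $\eta' \leteq \eta - \delta'/2$, we define
\[
  E' \leteq \{\xbar \in D^*_{(\theta',\eta')} : \xbar \in D^* \setminus D \text{ or } |x_1| > \varepsilon^{-1} + M_0 + 1\}
\]
which is $D^*_{(\theta',\eta')}$ with a strip of bounded width near the curved boundary $|z_1|=\varepsilon^{-1}$ removed. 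By the deviation bounds, any preimage under $F$ of a point of $E'$ lies well inside $\Dte$, away from both the angular and curved boundaries.

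To conclude $E' \subseteq E$, I would run the topological argument from Proposition~\ref{prop:image-F abelian} verbatim. Both $E$ and $E'$ are open and contain a neighborhood of $\cbar$, so their intersection is non-empty. If $E' \setminus E$ were non-empty, then $\partial E \cap E'$ would be non-empty; take $\xbar_0$ in this intersection and a closed neighborhood $N \subseteq E'$. The key point is that $\overline{F^{-1}(N)}$ is compactly contained in $\Dte$: the deviation estimates keep it away from the angular and curved boundaries of $\Dte$, and since $|F(\zbar)| \geq |\zbar| - K_0 \log|\zbar| \to \infty$ as $|\zbar| \to \infty$, the preimage stays bounded in the affine chart as well. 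Extracting a convergent subsequence from $\zbar_k \leteq F^{-1}(\xbar_k)$ for $\xbar_k \to \xbar_0$ in $N \cap E$ gives a limit $\zbar_\infty \in \Dte$ with $F(\zbar_\infty) = \xbar_0$, contradicting $\xbar_0 \in \partial E$.

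The main obstacle is bookkeeping: converting the logarithmic bound on $G$ into quantitative control of both the angular and radial deviations of $F^{-1}$, and verifying that $\delta'$ and the strip width can be chosen uniformly in $\theta$. This uniformity follows because the estimate $|G(\zbar)| = O(\log|\zbar|)$ and the constant $K_0$ depend only on the degrees of the algebraic maps $\alpha_i$ and on $\varepsilon$, not on the choice of sector.
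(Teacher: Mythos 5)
Your proposal is correct and follows essentially the same route as the paper: replace the fixed displacement ball $B$ of the abelian case by a radius growing like $\mu\left|\log|\zbar|\right|$, and rerun the open-image/boundary-point argument of Proposition~\ref{prop:image-F abelian}. The only cosmetic difference is that you define $E'$ directly as the shrunken sector minus a strip rather than as $\{\zbar : B_{\zbar}\subseteq \Dte\}$; your phrase that the radial deviation is ``at most some constant $M_0$'' is imprecise (it is $O(\log|z_1|)$, unbounded), but since only the deviation near the inner boundary $|z_1|=\varepsilon^{-1}$ matters there, the argument is unaffected.
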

\begin{proof}
The proof is as in Proposition~\ref{prop:image-F abelian}, except we use the logarithmic growth of $G$ in place of boundedness. More precisely, one should use $E' = \{ \zbar \in \Dte : B_{\zbar} \subseteq \Dte\}$, where $B_{\zbar}$ is the closed ball centred at $\zbar$ of radius $\mu\left|\log|\zbar|\right|$, and $\mu$ is chosen such that $|S(\zbar) - \zbar| < \mu \left|\log|z|\right|$ on $\Dte$. Then the argument of Proposition~\ref{prop:image-F abelian} shows that $F(\Dte^*)$ contains $E'$. It is then clear that $D_{(\theta+\delta'/2,\eta-\delta'/2)}^*$ is contained in $E'$, except for a strip of bounded width near the part of the boundary given by $|z_1| = \varepsilon^{-1}$.
\end{proof}

As before, we find a sheaf of maps $S$ which are local inverses to $F$. 
\begin{proposition}\label{prop:maps S torus}
For small enough $D^*$, for all $\theta \in \R$ and $\eta = \theta+2\pi - \delta- \delta'$, there is a map $S: \Dte^* \to \P_n$  with the following properties:
\begin{enumerate}
 \item For all $\zbar \in \Dte^*$, we have  $F(S(\zbar)) = \zbar$, where we take the branch of $F$ defined on $\Dte^*$ or its analytic continuation to a slightly larger domain. 
 \item In particular, for $\zbar \in D^*\setminus D$ we have $S(\zbar) = \zbar$.
 \item  For $\zbar \in \Dte$ we have $\exp(S(\zbar)) = \alpha(S(\zbar))$ if and only if $\zbar \in \Lambda$.
 \item  $S$ is continuous on $\Dte^*$ and holomorphic on $\Dte$.
 \item The restriction of $S$ to the finite part of the domain $\Dte$ is approximately given by $S(\zbar) \approx \zbar + G(\zbar)$. More precisely, 
    \[ S(\zbar) - \zbar -  G(\zbar) \to \mathbf0 \text{ as } \zbar \to \cbar \text{ with } \zbar \in \Dte.\]
     \end{enumerate}
\end{proposition}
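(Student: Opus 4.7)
The proof will closely follow that of Proposition~\ref{prop:maps S} in the abelian case, with the main adjustments coming from the fact that $G$ and $F-\id$ now grow logarithmically at infinity rather than being bounded. By Proposition~\ref{prop:injective-torus}, for small enough $D^*$ we may pick $\delta>0$ so that $F$ is injective on $D^*_{(\theta,\theta+2\pi-\delta)}$; then Proposition~\ref{prop:image-F torus} provides $\delta'>0$ such that the image of $F$ contains $D^*_{(\theta+\delta'/2,\theta+2\pi-\delta-\delta'/2)}$, except for a strip of bounded width near the part of the boundary given by $|z_1|=\varepsilon^{-1}$. As in Proposition~\ref{prop:maps S}, I would shrink $\varepsilon$ to absorb this bounded strip, then relabel $\eta := \theta+2\pi-\delta-\delta'$, and define $S$ on the resulting $\Dte^*$ as the set-theoretic inverse of $F$, analytically continued from a slightly larger domain.

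Properties~(1)--(3) are immediate: (1) by construction; (2) because $F$ is the identity on $D^*\setminus D$; (3) from the characterisation of solutions to $\exp(\wbar) = \alpha(\wbar)$ in terms of $F(\wbar) \in \Lambda$ established in Step~2. For~(4), holomorphicity on $\Dte$ follows from the inverse function theorem together with the non-vanishing of $\det \dd F$ from Proposition~\ref{prop:Jac < epsilon torus}. Continuity at a point $\abar \in \Dte^* \setminus \Dte$ needs one additional observation: although $|S(\zbar) - \zbar| = |G(S(\zbar))|$ is no longer bounded, it grows only as $O(\log|\zbar|)$, which is $o(|\zbar|)$; in the chart $U_1$ this means that the coordinates $S_1(\zbar)^{-1}$ and $S_i(\zbar)/S_1(\zbar)$ still converge to the corresponding coordinates of $\abar$, so $S(\zbar) \to \abar$ in $\P_n$.

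The asymptotic statement~(5) is the one place where the torus case genuinely differs from the abelian one, since there $G$ was continuous on $\Dte^*$ and one obtained $S(\zbar) - \zbar \to G(\cbar)$ directly. From $\zbar = F(S(\zbar)) = S(\zbar) - G(S(\zbar))$ one extracts $S(\zbar) - \zbar - G(\zbar) = G(S(\zbar)) - G(\zbar)$, so it suffices to bound the oscillation of $G$ between $\zbar$ and $S(\zbar)$. The plan is to apply the mean value inequality along the segment $[\zbar, S(\zbar)]$, which lies inside $\Dte$ for $\zbar$ close enough to $\cbar$ because its length $O(\log|\zbar|)$ is negligible compared with the distance $\Theta(|\zbar|)$ from $\zbar$ to the relevant part of $\partial \Dte$. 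This yields
\[
  |G(S(\zbar)) - G(\zbar)| \leq \sup_{\wbar \in [\zbar, S(\zbar)]} \|\dd G(\wbar)\| \cdot |S(\zbar) - \zbar|.
\]
Applying Lemma~\ref{lem:log deriv} at each such $\wbar$ with a polydisc of radius a fixed fraction of $|\zbar|$ gives $\|\dd G(\wbar)\| = O(\log|\zbar|/|\zbar|)$, and combining with $|S(\zbar) - \zbar| = O(\log|\zbar|)$ produces $|G(S(\zbar)) - G(\zbar)| = O((\log|\zbar|)^2/|\zbar|) \to 0$ as $\zbar \to \cbar$.

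The main technical obstacle throughout is the bookkeeping needed to keep all auxiliary polydiscs used in the Cauchy estimates inside $\Dte$ and to maintain uniformity as $\theta$ varies; this is handled by one further shrinking of $D^*$ in the spirit of the earlier parts of Section~\ref{alg torus section}, and involves no essentially new ideas beyond those used in the abelian case.
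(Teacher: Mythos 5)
Your proposal is correct and follows essentially the same route as the paper: points 1--4 are obtained exactly as in the abelian case (with the logarithmic growth of $G$ replacing boundedness for continuity at infinity), and point 5 is proved by the identity $S(\zbar)-\zbar-G(\zbar)=G(S(\zbar))-G(\zbar)$, the mean value inequality along $[\zbar,S(\zbar)]$, and Lemma~\ref{lem:log deriv} applied with polydiscs of radius proportional to $|\zbar|$, yielding the same $O\left((\log|\zbar|)^2/|\zbar|\right)$ bound. The only cosmetic difference is that the paper inserts an intermediate step showing $|S(\zbar)-\zbar|\sim|G(\zbar)|$ before concluding $|S(\zbar)-\zbar|=O(\log|\zbar|)$, which you assert directly.
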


\begin{proof}
The proof of points 1--4 is the same as in the abelian case. For point~5 observe, as in the abelian case, that $S(\zbar) -\zbar = G(S(\zbar))$ and
\[
|(S(\zbar) - \zbar) - G(\zbar)| = |G(S(\zbar)) - G(\zbar)| \leq \max_{\wbar\in [\zbar,S(\zbar)]}\|\dd G(\wbar)\| \cdot |S(\zbar)-\zbar|.\]
Since  $\|\dd G(\wbar)\| \to 0$ by Proposition \ref{prop:Jac < epsilon torus}, we have $|G(S(\zbar)) - G(\zbar)| = o(|G(S(\zbar))|)$ as $\zbar \to \cbar$. By the triangle inequality $||G(S(\zbar))| - |G(\zbar)|| = o(|G(S(\zbar))|)$ which implies that $\frac{|G(S(\zbar))|}{|G(\zbar)|}\to 1$ and so $|S(\zbar) - \zbar| \sim |G(\zbar)| = O(\log |\zbar|)$. 

Further, there is a constant $\gamma > 0$ such that for all sufficiently large $\zbar \in \Dte$ there are $\theta', \eta'$ such that the polydisc of radius $\gamma |\zbar|$, centred at $\zbar$, is contained in $D_{(\theta',\eta')}$. Then by Lemma~\ref{lem:log deriv} we have $\|\dd G(\zbar)\| \leq \gamma^{-1}K \frac{\log |\zbar|}{|\zbar|}$. When $\zbar$ is sufficiently large, so is $\wbar \in [\zbar, S(\zbar)]$, hence $$\max_{\wbar\in [\zbar,S(\zbar)]}\|\dd G(\wbar)\| = O\left( \frac{\log |\zbar|}{|\zbar|} \right)$$ and so 
\[\max_{\wbar\in [\zbar,S(\zbar)]}\|\dd G(\wbar)\| \cdot |S(\zbar)-\zbar| =  O\left( \frac{(\log |\zbar|)^2}{|\zbar|} \right)\]
so it tends to $0$.
Thus, $|(S(\zbar) - \zbar) - G(\zbar)| \to 0$ as $\zbar \to \cbar$. 
\end{proof}

\paragraph{Step 5: Analytic continuation of \texorpdfstring{$S$}{S}.}
The patching together of the maps $S$ on the domains $\Dte^*$, yielding the desired sheaf $\Sbar$, is done exactly as in the abelian case.

Point~1 of Theorem~\ref{thm-main-torus} can also be proven as in the abelian case, with the following changes. One observes that for every point $\abar \in \Omega$, there is a branch $S$ of $\Sbar$ around $\abar$ satisfying the inequality $|S(\zbar) - \zbar| < \mu \left|\log|\zbar|\right|$ for some $\mu$, by the estimates of Step~2. Furthermore, one can choose the branches $S$ so that the imaginary part of $S(\abar)$ is bounded uniformly in $\abar$. We can then assume that $\mu$ is chosen uniformly in $\abar$. It now suffices to take $\{\zbar \in \Omega : B_{\zbar} \subseteq \Omega\}$ as the set $\Omega'$, where $B_{\zbar}$ is the closed ball centred at $\zbar$ of radius $\mu \left|\log|\zbar|\right|$ as in Proposition~\ref{prop:image-F torus}, and continue the proof as in the abelian case. One finds that the image $\mathbf{S}(\Omega)$ covers $\Omega$ except possibly for points $\zbar$ of distance at most $\mu \left|\log|\zbar|\right|$ from the boundary.

Point~2 follows from point~3 of Proposition~\ref{prop:maps S torus}. 

Point~3 can be proven as in the abelian case by working with $d$ branches of $\Fbar$ and $\Gbar$.

Point~4 follows from Proposition~\ref{prop:maps S torus} (5).

For point~5, observe that $\Omega \cap \Lambda \neq \emptyset$ when all the ratios $\frac{c_k}{c_j}$ are rational. Moreover, even though the lattice $\Lambda$ has rank $n$ (as opposed to $2n$ in the abelian case), the set $\Omega \cap \Lambda$ is Zariski dense in $\C^n$, and so is $\Sbar(\Omega \cap \Lambda)$. That completes the proof of Theorem~\ref{thm-main-torus}, and Theorem~\ref{algebraic torus case} follows.\qed

\begin{remark}
  As in the abelian case (Remark~\ref{remark: general alpha}), our method yields solutions of $\exp(\zbar) = \beta(\zbar)$ for any holomorphic map $\beta : \Dte \to \C^n$, provided we have control on its growth rate as in \eqref{eq: alpha growth}. In particular, this gives an alternative proof of \cite[Rem.\ 2.8]{daquino--fornasiero-terzo}.
\end{remark}

\section{Final remarks}

Our methods can be adapted to solve more general systems of exponential equations, for example those combining exponential and $\wp$-functions. We indicate below that the analogue of Theorem \ref{dominant abelian case} holds for split semiabelian varieties, that is, complex algebraic groups that are isomorphic to a product of an abelian variety and an algebraic torus.

\begin{theorem}\label{split semiabelian case}
Let $S = A \cross \gm^q$ be a complex split semiabelian variety of dimension $n$. Let $V \subs \C^n \cross S$ be an algebraic subvariety with dominant projection to $\C^n$. Then there is $\zbar \in \C^n$ such that $(\zbar, \exp_{S}(\zbar)) \in V$. 
\end{theorem}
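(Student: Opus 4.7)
The plan is to combine the abelian and toric arguments by exploiting the product structure. Write $S = A \times \gm^q$ with $\dim A = p$ and $p+q=n$, and decompose $\C^n = \C^p \times \C^q$ accordingly, so that $\exp_S(\zbar_A,\zbar_T) = (\exp_A(\zbar_A), \exp(\zbar_T))$ and the period lattice factors as $\Lambda_S = \Lambda_A \times (2\pi i \Z)^q$. First I would reduce to the case $\dim V = n$ by intersecting with a generic linear subvariety in the product, exactly as in the derivation of Theorem~\ref{dominant abelian case} from Theorem~\ref{thm-main-abelian}.

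Next I would take the projective closure $\overline{V} \subseteq \P_n \times \overline{S}$, where $\overline{S} = A \times \P_1^q$ is a suitable completion (compact in the abelian factor, projectivised in the toric factor), and apply Proposition~\ref{alg map prop} to obtain algebraic maps $\alpha : \Dte^* \to \overline{S}$ defined on sector domains around points $\cbar$ of a Zariski open dense $C \subseteq H$. Following the torus case (Step~1 of Section~\ref{alg torus section}), I would shrink $C$ to exclude the set of points $\zbar$ for which some toric coordinate of $\alpha(\zbar)$ equals $0$ or $\infty$, and also require that for $[0:t_1:\cdots:t_n] \in C$ none of the $t_i$ vanish, so that we can work in a fixed chart and all coordinates of $\zbar$ are comparable in size as in \eqref{eq:roughly prop}. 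This makes $\alpha$ land in $S$ itself when restricted to $\Dte$.

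Then, since $\Dte$ is simply connected, pick a holomorphic branch $G = \Log_S \circ \alpha : \Dte \to \C^n$, which decomposes as $G = (G_A, G_T)$, with $G_A$ the abelian part and $G_T$ the toric part. By Proposition~\ref{prop:bounded} applied to $A$, the map $G_A$ is bounded (and extends continuously to $\Dte^*$), while by the estimates of Step~2 of Section~\ref{alg torus section}, $G_T$ has logarithmic growth $|G_T(\zbar)| = O(\log|\zbar|)$. Hence $G$ has logarithmic growth overall, and the map $F(\zbar) \leteq \zbar - G(\zbar)$ (extended by the identity on $\Dte^* \setminus \Dte$) is continuous on $\Dte^*$, holomorphic on $\Dte$, and sends the desired solutions of $\exp_S(\zbar) = \alpha(\zbar)$ to $\Lambda_S$. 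The Cauchy estimate argument of Lemma~\ref{lem:log deriv} and Proposition~\ref{prop:Jac < epsilon torus} carries over component by component to give $\|\dd G(\zbar)\| < 1/2$ on sufficiently small $D^*$, since both a bounded function and one with logarithmic growth have derivative tending to $0$.

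Local injectivity of $F$ on sector domains $D^*_{(\theta,\theta+2\pi-\delta)}$ then follows from the same argument as Proposition~\ref{prop:injective-torus}: the obstruction is comparing linear-in-$|\zbar|$ distance with the $O(\log|\zbar|)$ difference $|G(\xbar) - G(\ybar)|$, and the linear term wins for $D^*$ small. The image of $F$ then contains $D^*_{(\theta+\delta'/2,\eta-\delta'/2)}$ away from a strip of bounded width by a verbatim repeat of Proposition~\ref{prop:image-F torus}, so we obtain local inverses $S : \Dte^* \to \P_n$, and the set $\{S(\lambar) : \lambar \in \Lambda_S \cap \tilde\Omega \}$ supplies the desired exponential points. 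Zariski density of $\Lambda_S$ in $\C^n$ (which holds even though $\Lambda_S$ has real rank only $2p+q < 2n$, by the same argument as in Step~5 of Section~\ref{alg torus section}) guarantees that the intersection with our sector domains is non-empty, yielding at least one $\zbar$ with $(\zbar,\exp_S(\zbar))\in V$.

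The main obstacle, and really the only delicate point, is reconciling the two different growth regimes of $G_A$ and $G_T$ in a single injectivity/image argument. I expect this to cause no genuine difficulty because the toric bound dominates the abelian one, so the torus-case estimates apply to the full map $G$; however, care is needed to verify that the choice of radius $\varepsilon$ defining $D^*$ can be shrunk simultaneously for both blocks, and that the proportionality assumption \eqref{eq:roughly prop} on the toric coordinates does not clash with whatever freedom is needed in the abelian coordinates. Once that is checked, Theorem~\ref{split semiabelian case} follows, and in fact one obtains the natural analogue of Theorems~\ref{thm-main-abelian} and~\ref{thm-main-torus} describing the solutions asymptotically as $S(\zbar) = \zbar + O(\log|\zbar|)$.
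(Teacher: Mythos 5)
Your proposal is correct and follows essentially the same route as the paper, which proves Theorem~\ref{split semiabelian case} precisely by splitting $\exp_S=(\exp_A,\exp_{\gm^q})$ and $\Log_S\alpha=(\Log_A\beta,\Log_{\gm^q}\gamma)$, using boundedness of the abelian logarithm and logarithmic growth of the toric one so that $\dd F$ tends to the identity. The paper gives only a brief sketch at this point, so your more detailed account of how the two growth regimes combine is a faithful expansion of the intended argument rather than a different proof.
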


The proof is simply a combination of the proofs of Theorems \ref{thm-main-abelian} and \ref{thm-main-torus}, so we just present a brief outline.

\begin{proof}[Proof sketch]
Let $p=\dim A$ so that $p+q=n$. The exponential map of $S$ then can be written as $\exp_S = (\exp_A,\exp_{\gm^q}).$ If $\Lambda\seq \C^p$ is the lattice of periods of $\exp_A$ then $\Lambda\cross (2\pi i \Z)^q$ is the lattice of periods of $\exp_S$.

We extract a multivalued algebraic map $\alpha: D^* \to S$ from $V$ as before, and we can write $\alpha(\zbar)$ as $(\beta(\zbar),\gamma(\zbar))$ where $\beta : D^* \to A, \gamma : D^* \to \gm^q$.

Then locally on sector domains we can define
\[ F(\zbar) := \zbar - \Log_S \alpha(\zbar) = \zbar - (\Log_A \beta(\zbar), \Log_{\gm^q} \gamma(\zbar)).\]

The previous arguments now show that the total derivative of $F$ tends to the identity as $|\zbar|\to \infty$, and the rest of the proof follows as before.
\end{proof}

It seems likely that this method can be adapted to work for any semiabelian variety, without the split assumption. The issue in the semiabelian case is that we have neither the compactness of abelian varieties nor the explicit formulas for logarithmic maps as in the case of tori. So a better geometric or analytic understanding of the logarithmic maps would be needed. For instance, one needs an appropriate estimate on the growth of the semiabelian logarithm, so as to ensure that the total derivative of $F$ tends to the identity.

Relaxing the assumption on $V$ that the projection to $\C^n$ is dominant seems more difficult. Our proof ultimately depends on the same good asymptotic behaviour of a suitable function $F$ as $|\zbar| \to \infty$ in $\C^n$ as was used in \cite{brown-masser} to apply Newton's method. When $V$ does not project dominantly to $\C^n$, any analogous function $F$ we define can oscillate or grow too fast. Nonetheless, we hope that our approach using geometric and topological methods to show that the image of $F$ contains lattice points will be more robust to such issues than Newton's method is.

\bibliographystyle{alpha}
\bibliography{ref}

\end{document}